\documentclass[12pt]{article}
\usepackage{amsmath}
\usepackage{graphicx}
\usepackage{enumerate}
\usepackage{natbib}
\usepackage{subfigure}
\usepackage{url} 
\usepackage{multirow}
\usepackage{amsmath}
\usepackage{commath}
\usepackage{amsthm}
\usepackage{amssymb}
\usepackage{setspace,mathrsfs}
\usepackage{algorithm}
\usepackage{algorithmic}
\usepackage{url,amsmath,amsfonts,amssymb}
\usepackage{caption,algorithm,graphicx}
\usepackage{listings,enumerate,color,relsize,multirow,rotating,algorithm}
\usepackage{booktabs}
\usepackage{bbm}

\newcommand{\blind}{1}

\definecolor{purple}{RGB}{250,000,180}

\usepackage[percent]{overpic}
\usepackage{xcolor}

\definecolor{RawBlue}{HTML}{4682B4}
\definecolor{PCOrange}{HTML}{FF8C00}
\definecolor{RawBlue}{HTML}{4682B4}
\definecolor{PCOrange}{HTML}{FF8C00}

\RequirePackage[colorlinks,
            linkcolor=black,
            anchorcolor=black,
            citecolor=black,
            urlcolor=black,
            ]{hyperref}

\def\ttop{^{\top}}

\newcommand{\op}{\mathrm{op}}

\newcommand{\Diag}{\mathrm{Diag}}

\usepackage{mathrsfs}

\def \R {\mathbb{R}}
\def \P {\mathbf{P}}
\def \E {\mathbf{E}}

\newcommand{\ts}{\textstyle}
\newcommand{\cov}{\textup{cov}}
\newcommand{\var}{\textup{var}}

\newcommand{\tr}{\textup{tr}}

\newtheorem{lemma}{Lemma}

\newtheorem{proposition}{Proposition}
\newtheorem{assume}{Assumption}

\newtheorem{theorem}{Theorem}

\begin{document}

\def\spacingset#1{\renewcommand{\baselinestretch}%
{#1}\small\normalsize} \spacingset{1.5}


\if1\blind
{
  \title{\Large\bf A Goodness-of-Fit Test for Independent Component Models in High Dimensions}
  \author{Mingshuo Liu, Siyao Wang, and Miles E. Lopes\\
    University of California, Davis}
\date{}
  \maketitle
} \fi

\if0\blind
{
  \begin{center}
      {\LARGE\bf A Hypothesis Test for Independent Component Models under the High-Dimensional Regime}
\end{center}
  \medskip
} \fi

\begin{abstract}
\singlespacing
Independent component (IC) models are a standard tool for representing multivariate data in statistics, signal processing, and machine learning. Despite the extensive use of IC models, much less attention has been given to goodness-of-fit tests for assessing their compatibility with data. We develop the \emph{first} goodness-of-fit test for IC models that is supported by a theoretical validity guarantee when the data dimension and sample size diverge proportionally. This is  made possible by the fact that the test does not rely on a pre-whitening step, which often limits the applicability of other goodness-of-fit tests in high dimensions. Our theoretical analysis is complemented with numerical experiments that demonstrate the test's size control and power under a range of conditions. In addition, we provide examples involving gene-expression data to illustrate that the test has potential for effective diagnostic use in practice.

\end{abstract}

\noindent%
{\it Keywords: independent component model, goodness-of-fit,  high-dimensional statistics} \spacingset{1}

\spacingset{1.2}

\section{Introduction}\label{sec:intro}
We say that a set of observations $X_1,\dots,X_n\in\R^p$ is  generated from an independent component (IC) model if 
\begin{align}
\label{eqn:ICmodel}
X_i = \Sigma^{1/2} Z_i
\end{align}
holds for $i=1,\dots,n$, where $Z_1,\dots,Z_n\in\R^p$ are unobserved random vectors that form the columns of a $p\times n$ matrix consisting of i.i.d.~entries that have zero mean and unit variance, and $\Sigma\in\R^{p\times p}$ is an unknown matrix that is non-random and positive definite. Such models are widely used throughout statistics, signal processing, and machine learning as a standard tool for representing multivariate data. For example, IC models of the form~\eqref{eqn:ICmodel} appear frequently in contexts such as regression, classification, hypothesis testing, principal components analysis, and more~\citep[e.g.][]{bai2010spectral,rubio2012performance, yao2015sample,dobriban2018high,ledoit2020analytical,bodnar2022optimal, yao2023rates,bach2024high,pereira2024asymptotics}. Furthermore, such models are among the most well established in the random matrix theory literature, where they are sometimes called ``separable models''.
%
%
%
More broadly still, a large body of research based on extensions and variations of the model~\eqref{eqn:ICmodel} has been pursued in the field of independent component analysis (ICA)~\citep[e.g.][]{comon2010handbook, nordhausen2018independent,chen2021application, koldovsky2021dynamic, brendel2023unifying, asl2024spike, ishigami2024basic,auddy2025large}.

 In comparison to the extensive use of IC models, much less attention has been given to the goodness-of-fit problem, which seeks to test the hypothesis that data were generated from such a model. In particular, goodness-of-fit tests are valuable because they offer practitioners a systematic way to assess the reliability of model-based conclusions. However, the small handful of existing goodness-of-fit tests for IC models are not intended to handle high-dimensional data, and this is a serious issue because the applications of IC models frequently arise from such data. In response to this challenge, the key contribution of our work is a new goodness-of-fit test for IC models that is the \emph{first} to be supported by a theoretical validity guarantee in high-dimensional settings, including those with $p>n$.\\[-0.2cm]

\noindent {\textbf{Related work.}} Up to now, only a few papers have developed goodness-of-fit tests for IC models similar to~\eqref{eqn:ICmodel}~\citep{matteson2017independent,hallin2024consistent,schkoda2025goodness}. 
The first two of these rely on an estimate $\hat M$ of the matrix $\Sigma^{-1/2}$ in order to compute the ``pre-whitened'' vectors $\hat Z_i=\hat{M}X_i$, $i=1,\dots,n$, which are treated as proxies for $Z_1,\dots,Z_n$. After this is done, classical strategies for testing independence are then applied to the entries of $\hat Z_1,\dots,\hat Z_n$. 
On one hand, the tests in~\citep{matteson2017independent,hallin2024consistent} have the favorable property of being able to handle more general versions of the model~\eqref{eqn:ICmodel} that allow $\Sigma^{1/2}$ to be replaced with any invertible matrix. But on the other hand, they have an important limitation, which is that they \emph{need to accurately estimate the inverse of a $p\times p$ matrix}. Indeed, it is well known that this inverse matrix estimation problem is difficult even when the dimension $p$ is moderately large. Meanwhile,~\cite{schkoda2025goodness} consider a setting where $\Sigma^{1/2}$ is replaced with a matrix of coefficients associated with a structural equation model, and they develop an approach based elegant algebraic constraints satisfied by cumulant tensors. However, this approach relies on the asymptotic normality of certain matrices of cumulant estimates whose size increases with $p$, and methods for constructing such matrices in our context are not addressed. 
For these reasons, the three papers~\citep{matteson2017independent,hallin2024consistent,schkoda2025goodness} focus on low-dimensional settings, such that the theoretical results use classical asymptotics with $p$ held fixed as $n\to\infty$, and all empirical examples involve $p\leq 20$. 

In addition to IC models, there has been a flurry of recent interest in goodness-of-fit testing for several related models, especially in high dimensions. For instance, methods have been developed in the last few years for testing Gaussian models~\citep{chen2023normality, bing2025high,cui2026atm}, elliptical models~\citep{ tang2024nonparametric,wang2025testing}, graphical models~\citep{lin2025goodness,le2022testing}, and structural equation models~\citep{schultheiss2023,schultheiss2024,schkoda2025goodness}. Echoing the challenges of testing IC models discussed earlier, a prominent theme in this line of research is to avoid the drawbacks of pre-whitening, and several works have shown that a viable way to do this is by testing moment-based constraints~\citep{schultheiss2024,bing2025high,schkoda2025goodness,wang2025testing,cui2026atm}.

As outlined below, our proposed test for IC models also pursues a moment-based strategy and avoids pre-whitening, which is a key reason why the test is well-suited to high-dimensional settings.
Another point to emphasize is that our work targets moment constraints that are specific to IC models, and hence, it differs in essential ways from moment-based approaches for other models.\\[-0.2cm]

\noindent {\textbf{Overview of proposed method.}} At a conceptual level, our proposed method exploits the fourth-order moment structure of IC models by using an exact formula for the variance of quadratic forms. Namely, if $X_1\in\R^p$ is an observation generated from the IC model~\eqref{eqn:ICmodel}, and the component variable $Z_{11}$ has a finite fourth moment, then for any fixed symmetric matrix $A\in\R^{p\times p}$, we have
\begin{equation}
    \var\big( X_1\ttop A X_1\big) = 2\|\Sigma^{1/2} A \Sigma^{1/2}\|_F^2 + (\E(Z_{11}^4)-3) \sum_{j=1}^p (e_j\ttop \Sigma^{1/2} A \Sigma^{1/2} e_j)^2,
\end{equation}
where $\|\cdot\|_F$ is the Frobenius norm, and $e_j$ is the $j$th standard basis vector (see Lemma~\ref{lemma:covZMZ} in Appendix~\ref{sec:app,background}). The importance of this formula is that any choice of $A$ can serve as a ``probe'' to extract a constraint on an IC model. In essence, our method selects $p+1$ choices of such probe matrices and then aggregates them into a ``global'' constraint that is amenable to testing. The particular choices of the probe matrices are $A=I$, and $A=e_je_j\ttop$ for $j=1,\dots,p$, and in Lemma~\ref{lemma:covZMZ} they are combined to derive the following constraint equation
\begin{equation}\label{eqn:global}
      \frac{\var(\|X_1\|_2^2)-2\|\Sigma\|_F^2}{\sum_{j=1}^p\Sigma_{jj}^2}
  =\frac{\E(\|X_1\|_4^4)-3\sum_{j=1}^p\Sigma_{jj}^2}{\|\Sigma^{1/2}\|_4^4},
\end{equation}
where $\|\cdot\|_q$ is the entrywise $\ell_q$ norm on vectors and matrices for $q\geq 1$. Next, we develop separate estimates for the two sides of this constraint equation, and then use their difference, denoted $\hat \Delta_n$, as a test statistic that rejects the null hypothesis of an IC model when $|\hat\Delta_n|$ is sufficiently large.
Notably, because the equation~\eqref{eqn:global} is derived from $p+1$ model constraints, the statistic $\hat \Delta_n$ benefits from being sensitive to an increasing number of possible constraint violations as $p\to\infty$.\\[-0.2cm]

\noindent {\textbf{Methodological and theoretical challenges.}} To convert the previous intuition into a formal testing procedure, it is necessary to resolve the following challenges---all of which will be done in a high-dimensional setting where the limit $p/n\to\gamma$ holds for some constant $\gamma>0$ as $n\to\infty$. 
\begin{itemize}
\item Among all of the moment parameters appearing in~\eqref{eqn:global}, the estimation of $\|\Sigma^{1/2}\|_4^4$ is particularly non-trivial. This is because  the matrix square root function and the norm $\|\cdot\|_4$ lend themselves to conflicting types of matrix representations (i.e.~spectral and entrywise). Nevertheless, this estimation problem can be successfully handled using a bespoke variant of the linear shrinkage covariance estimators popularized by~\cite{ledoit2004well}, and we prove that the estimate is ratio-consistent for $\|\Sigma^{1/2}\|_4^4$ in Lemma~\ref{lem:l4normconsistency}.
\item Although the difference statistic $\hat \Delta_n$ has a clear interpretation as a measure of misfit, this statistic is not adequate for testing by itself, because its variance is unknown. In addition, it will turn out that the estimates used to construct $\hat \Delta_n$ are high-dimensional fourth degree polynomials. Consequently, deriving an asymptotic formula and a consistent estimate for $\var(\hat \Delta_n)$ are substantial undertakings.

\item After constructing an estimate $\hat\sigma_n^2$ for the variance of $\hat \Delta_n$, the last major technical challenge lies in proving a \emph{high-dimensional central limit theorem} for the standardized statistic  $\hat \Delta_n/\hat\sigma_n$ under the null hypothesis of an IC model (Theorem~\ref{thm:main}). This is made possible by the fact that $\hat \Delta_n$ can be approximated with a U statistic. However, it is crucial to note that because we work in a setting where $p/n\to\gamma>0$ as \smash{$n\to\infty$,} standard central limit theorems for U statistics are not applicable, and our proof is built from the ground up with precise moment calculations for high-dimensional quadratic forms.
\end{itemize}

\noindent {\textbf{Empirical performance.}} To complement the theoretical analysis of our method, Section~\ref{sec:simu} presents numerical results under null and alternative hypotheses across a range of high-dimensional conditions. These results show that the proposed test maintains accurate level control and is able to reliably detect non-IC models. Additionally, in Section~\ref{sec:realdata}, we present results from several GTEx gene-expression datasets, which illustrate that the test has the potential to be an effective diagnostic tool for practitioners.\\[-0.2cm]

\noindent\textbf{Notation.} For any $q\geq 1$, the $\ell_q$ norm of a real vector $v\in\R^p$ is $\|v\|_q=(\sum_{j=1}^p |v_j|^q)^{1/q}$, and for a real $p\times p$ matrix $A$, it is $\|A\|_q=(\sum_{1\leq i,j\leq p}|A_{ij}|^q)^{1/q}$.  When $A$ is symmetric, its sorted eigenvalues are denoted by $\lambda_{\max}(A)=\lambda_1(A) \geq  \cdots \geq \lambda_p(A)=\lambda_{\min}(A)$, its operator norm is given by $\|A\|_{\textup{op}}=\max_{1\leq j\leq p}|\lambda_j(A)|$, and its nuclear norm is given by $\|A\|_*=\sum_{j=1}^p|\lambda_j(A)|$. The effective rank of $A$ is defined as ${\tt{r}}(A)=\|A\|_*/\|A\|_{\textup{op}}$ if $A$ is non-zero, and ${\tt{r}}(0)=0$.\label{effrank} The square matrix that matches $A$ on the diagonal and is zero elsewhere is denoted by $\textup{Diag}(A)$. For two sequences of real numbers $\{a_n\}$ and $\{b_n\}$, we write $a_n=\mathcal{O}(b_n)$, if there exists a constant $C>0$, not depending on $n$, such that $|a_n| \leq C|b_n|$ holds for all large $n$. We also write $a_n\lesssim b_n$ as an equivalent notation for $a_n=\mathcal{O}(b_n)$, and if both of the relations $a_n \lesssim b_n$ and $b_n \lesssim a_n$ hold, then we write $a_n \asymp b_n$. If $a_n/b_n\to 0$ as $n\to\infty$, then we write $a_n=o(b_n)$. Lastly, when dealing with sequences of random variables, convergence in distribution is denoted by $\xrightarrow{\mathcal{L}}$.

\section{Method}
\label{sec:method}
\noindent {\textbf{Preliminaries.}}
When estimating the two sides of the constraint equation~\eqref{eqn:global}, we use separate halves of the data. This ensures that the statistic $\hat \Delta_n$ is a difference of two independent estimates, which will ultimately allow for a test with a tractable limiting null distribution. Likewise, to simplify notation, we always assume that the sample size $n$ is even. Also, to streamline the definition of many analogous estimates, we use $\hat{(\cdot)}$ to refer to estimates composed from  $\{X_{1},\dots,X_{n}\}$, and we use  $\check{(\cdot)}$ and $\tilde{(\cdot)}$  to respectively refer to the corresponding estimates composed from $\{X_1,\dots,X_{n/2}\}$ and $\{X_{n/2+1},\dots,X_{n}\}$. For example, we write $\hat\Sigma=\frac{1}{n}\sum_{i=1}^n X_iX_i\ttop$, $\check\Sigma=\frac{1}{n/2}\sum_{i\leq n/2} X_iX_i\ttop$, and $\tilde\Sigma=\frac{1}{n/2}\sum_{i>n/2} X_iX_i\ttop$.
 As one further preparation, if $f$ is a generic real-valued function on $\R^p$, we use $\hat{\E}(f(X_1))$ and $\hat{\var}(f(X_1))$ to respectively denote the sample mean and sample variance of the values $\{f(X_1),\dots,f(X_n)\}$.\\

\noindent{\textbf{Testing procedure.}} With the previous conventions in hand, the  difference statistic $\hat \Delta_n$ is defined as
\begin{equation}\label{eqn:hatdelta}
    \hat \Delta_n = 
    \frac{\check{\var}(\|X_1\|_2^2)-2\big(\|\check\Sigma\|_F^2-\frac{1}{n/2}\tr(\check\Sigma)^2\big)}{\sum_{j=1}^p\check\Sigma_{jj}^2}
  - 
  \frac{\tilde\E(\|X_1\|_4^4)-3\sum_{j=1}^p\tilde\Sigma_{jj}^2}{\|\tilde{\mathfrak{S}}^{1/2}\|_4^4},
\end{equation}
where $\tilde{\mathfrak{S}}$ is a shrinkage covariance estimator that will be introduced shortly. Whereas most of the parameters in the constraint equation~\eqref{eqn:global} can be estimated well using empirical counterparts, this is not the case for $\|\Sigma\|_F^2$, because $\|\check\Sigma\|_F^2$ has a substantial bias in high dimensions. For this reason, we correct the bias by using $\|\check\Sigma\|_F^2-\frac{1}{n/2}\tr(\check\Sigma)^2$ in equation~\eqref{eqn:hatdelta}.

The shrinkage covariance estimator is defined by
\begin{equation} \label{eq:defS}
\tilde{\mathfrak{S}}
  =\tilde s\, \tilde\Sigma+(1-\tilde s )\,\textup{Diag}(\tilde\Sigma),
\end{equation}
where the shrinkage parameter $\tilde s\in[0,1]$ is selected according to
\begin{equation} \label{eq:defs}
    1-\tilde s^2=\min\!\bigg\{
  \frac{\frac{2}{n}\tr(\tilde\Sigma)^2}
       {\sum_{i\ne j}\tilde\Sigma_{ij}^2},\,1
  \bigg\}.
\end{equation}
This value of $\tilde s$ is chosen so that $\|\tilde{\mathfrak{S}}\|_F^2$ matches the bias-corrected estimate $\|\tilde\Sigma\|_F^2-\frac{1}{n/2}\tr(\tilde\Sigma)^2$ in the typical case that $0<\tilde s<1$.

To complete the description of our testing procedure, it remains to provide an estimate for the variance of $\hat \Delta_n$, and specify the rule for rejecting the null hypothesis of an IC model. The estimate of $\var(\hat \Delta_n)$ is defined by
\begin{equation}\label{eqn:hatsigmadef}
    \hat\sigma_n^2= \frac{4(\hat{\var}(\|X_1\|_2^2))^2}{n(\sum_{j=1}^p \hat \Sigma_{jj}^2)^2} + \frac{ 2\hat \var(\|X_1\|_4^4)}{n\|\hat{\mathfrak{S}}^{1/2}\|_4^8},
\end{equation}
and its ratio-consistency for $\var(\hat \Delta_n)$ is established in Section~\ref{sec:sigma,cons}. Based on this estimate, our main result in Theorem~\ref{thm:main} shows that as $n\to\infty$, the limit $\hat \Delta_n/\hat\sigma_n\xrightarrow{\mathcal{L}} N(0,1)$ holds under the null hypothesis. Thus, for a given nominal level $\alpha\in(0,1)$, our goodness-of-fit test rejects whenever the event
\begin{equation}\label{eqn:test}
    |\hat \Delta_n|> \hat\sigma_n z_{1-\alpha/2}
\end{equation}
is observed, where $z_{1-\alpha/2}$ is the $(1-\alpha/2)$-quantile for the $N(0,1)$ distribution.

\section{Theory}
\label{sec:theory}
We analyze the proposed test in a standard asymptotic framework where the dimension $p$ and the data-generating distribution are allowed to vary with $n$.
In particular, this means that we deal with a sequence of goodness-of-fit testing problems that are implicitly indexed by $n$. The corresponding sequence of null hypotheses is defined by the conditions in Assumption~\ref{A:model} below. As a matter of notation for stating these conditions, recall that the effective rank ${\tt{r}}(A)$ of a symmetric matrix $A$ is defined at the end of Section~\ref{sec:intro}.

\begin{assume}
\label{A:model} 
~\\[-0.7cm]
\begin{itemize}
    \item[(a)] For each $i=1,\dots,n$, the $i^{th}$ observation has the form $X_i=\Sigma^{1/2}Z_i$, where $\Sigma\in\R^{p\times p}$ is a non-random positive definite matrix, and $Z_1,\dots,Z_n\in\R^p$ are the columns of the upper-left $p\times n$ block of a doubly infinite array of i.i.d.~random variables, such that $\E(Z_{11})=0$, $1=\E(Z_{11}^2)<\E(Z_{11}^4)$ and $\E(|Z_{11}|^{8+\delta}) \lesssim 1$ for some $\delta>0$ that is fixed with respect to $n$.
    \item[(b)] There is a constant $\gamma>0$ such that $p/n\to\gamma$ as $n\to\infty$.
    \item[(c)] The matrix $\Sigma$ satisfies $1\lesssim \lambda_{\min}(\Sigma)\leq\lambda_{\max}(\Sigma)\lesssim 1$, and ${\tt{r}}(\Sigma-I)=o(\sqrt p)$. 
\end{itemize}
\end{assume}

The following theorem is our main theoretical result, which implies that if the null hypothesis of an IC model holds, then the proposed test defined by~\eqref{eqn:test} has a rejection rate that asymptotically matches the nominal level. The proof is given in Appendix~\ref{app:proofmain}.

\begin{theorem}
\label{thm:main}
If Assumption \ref{A:model} holds, then as $n \rightarrow \infty$, 
\begin{align*}
\hat \Delta_n/\hat\sigma_n\,\xrightarrow{\mathcal{L}} N(0,1).
\end{align*}
\end{theorem}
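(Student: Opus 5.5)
The plan is to exploit the sample splitting: $\hat\Delta_n=\check T-\tilde T$, where $\check T$ (the left-hand ratio) depends only on $X_1,\dots,X_{n/2}$ and $\tilde T$ (the right-hand ratio) depends only on $X_{n/2+1},\dots,X_n$. Write $\theta$ for the common value of the two sides of the constraint~\eqref{eqn:global}; under Assumption~\ref{A:model}(a) both sides equal $\kappa=\E(Z_{11}^4)-3$ by Lemma~\ref{lemma:covZMZ}. It then suffices to prove three things. First, $(\check T-\kappa)/\check\sigma\xrightarrow{\mathcal L}N(0,1)$ and $(\tilde T-\kappa)/\tilde\sigma\xrightarrow{\mathcal L}N(0,1)$, where $\check\sigma^2=4(\var\|X_1\|_2^2)^2/(n(\sum_j\Sigma_{jj}^2)^2)$ and $\tilde\sigma^2=2\var(\|X_1\|_4^4)/(n\|\Sigma^{1/2}\|_4^8)$ are the two summands of the population analogue of~\eqref{eqn:hatsigmadef}. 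Second, independence of the two halves then gives $(\check T-\tilde T)/(\check\sigma^2+\tilde\sigma^2)^{1/2}\to N(0,1)$, by characteristic functions. Third, $\hat\sigma_n^2/(\check\sigma^2+\tilde\sigma^2)\to1$ in probability, and Slutsky finishes.

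For $\tilde T$ I would linearize. The numerator $\tilde\E\|X_1\|_4^4-3\sum_j\tilde\Sigma_{jj}^2$ is a sample mean of i.i.d.\ terms plus the quadratic piece $\sum_j\tilde\Sigma_{jj}^2$. Its fluctuation is of order $(p/n)^{1/2}\asymp1$, while $\sum_j\Sigma_{jj}^2\asymp p$. So after dividing by $\|\Sigma^{1/2}\|_4^4\asymp p$ the fluctuation is $O(n^{-1/2})$. The term $\sum_j\tilde\Sigma_{jj}^2-\sum_j\Sigma_{jj}^2$ should be handled by a delta-method expansion, $\sum_j(\tilde\Sigma_{jj}^2-\Sigma_{jj}^2)\approx2\sum_j\Sigma_{jj}(\tilde\Sigma_{jj}-\Sigma_{jj})+\sum_j(\tilde\Sigma_{jj}-\Sigma_{jj})^2$. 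The second term has mean $\asymp p/n\asymp 1$, which is a bias of relative size $O(1/p)=o(n^{-1/2})$ and is therefore negligible. The first term is a sample mean that gets absorbed into the i.i.d.\ sum. The denominator is controlled by Lemma~\ref{lem:l4normconsistency}, but ratio-consistency alone is not enough: I need $\|\tilde{\mathfrak S}^{1/2}\|_4^4/\|\Sigma^{1/2}\|_4^4=1+o_P(n^{-1/2})$. This is where ${\tt r}(\Sigma-I)=o(\sqrt p)$ should enter. Writing $\Sigma=I+E$ with $\|E\|_*=o(\sqrt p)$, the off-diagonal contribution to $\|\Sigma^{1/2}\|_4^4$ is $o(\sqrt p)$, so errors in estimating it are $o(\sqrt p)=o(p\cdot n^{-1/2})$, and the diagonal part is estimated at the parametric rate. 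Once this is done, $\tilde T-\kappa$ is an average of i.i.d.\ variables with variance $\asymp1/n$, and Lyapunov's CLT applies using the $8+\delta$ moments, since $\|X_1\|_4^4$ needs a $2+\delta/4$ moment. Matching the limiting variance to $\tilde\sigma^2$ (which includes the factor 2 from the half sample) requires checking that the contribution of the $\tilde\Sigma_{jj}$ linear term is of lower order or is built into $\var\|X_1\|_4^4$. I would verify this with Lemma~\ref{lemma:covZMZ}-type moment formulas, and if it is not negligible I would adjust the target variance accordingly.

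For $\check T$, the denominator $\sum_j\check\Sigma_{jj}^2$ is handled as above. The numerator $\check\var(\|X_1\|_2^2)-2(\|\check\Sigma\|_F^2-\frac{2}{n}\tr(\check\Sigma)^2)$ is the hard part: both terms have size $\asymp p$, and their fluctuations must be computed jointly. My approach is to expand them in terms of $Z$ and rewrite the numerator as its expectation (which must be shown to equal $\kappa\sum_j\Sigma_{jj}^2+o(\sqrt p)$, a bias computation) plus a degenerate-plus-linear U-statistic of order two in $X_1,\dots,X_{n/2}$ with kernels built from $X_i^\top X_k$ and $\|X_i\|_2^2$. The linear Hájek part gives $\frac{2}{n}\sum_i(\|X_i\|_2^2-\tr\Sigma)^2$-type terms. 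The degenerate part, involving $(X_i^\top X_k)^2$, must be handled by a martingale CLT. I would verify the conditional variance and Lindeberg conditions through explicit fourth- and eighth-moment calculations for quadratic forms $Z^\top AZ$, using the trace formula of Lemma~\ref{lemma:covZMZ} and the bounded spectrum of $\Sigma$. I expect this to be the main obstacle, because in the regime $p\asymp n$ the degenerate part is \emph{not} negligible relative to the linear part. Both contribute at order $p/\sqrt n$, and the total variance has to come out to $(\var\|X_1\|_2^2)^2\cdot 4/n$ after normalization. Confirming this requires careful bookkeeping of terms like $\tr(\Sigma^2)^2$ and $\sum_j(\Sigma^2)_{jj}^2$, again using ${\tt r}(\Sigma-I)=o(\sqrt p)$ to reduce $\Sigma$ to $I$ up to negligible errors.

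Finally, for the ratio-consistency of $\hat\sigma_n^2$, I would show the following in probability: $\hat\var(\|X_1\|_2^2)/\var(\|X_1\|_2^2)\to1$, $\hat\var(\|X_1\|_4^4)/\var(\|X_1\|_4^4)\to1$, $\sum_j\hat\Sigma_{jj}^2/\sum_j\Sigma_{jj}^2\to1$, and (by Lemma~\ref{lem:l4normconsistency}) $\|\hat{\mathfrak S}^{1/2}\|_4^4/\|\Sigma^{1/2}\|_4^4\to1$. Each follows from Chebyshev with variance bounds from the $8+\delta$ moment assumption. The assumption $\E Z_{11}^4>1$ ensures the variances are nondegenerate, i.e.\ of order $p$, so these ratios are well defined.
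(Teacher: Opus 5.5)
Your overall architecture (split into $\check T-\tilde T$, a U-statistic CLT for the left half, Slutsky) is workable. However, there is a scaling error in your treatment of the right-hand ratio $\tilde T$, and several of the steps you plan fail because of it.

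You correctly note that the numerator $\tilde\E(\|X_1\|_4^4)-3\sum_j\tilde\Sigma_{jj}^2$ fluctuates at order $(p/n)^{1/2}\asymp 1$ and that the denominator is $\asymp p$. But then $\tilde T$ fluctuates at order $1/p\asymp 1/n$, not $n^{-1/2}$. More precisely, your own $\tilde\sigma^2=2\var(\|X_1\|_4^4)/(n\|\Sigma^{1/2}\|_4^8)$ is $o(n^{-3/2})$. This is because $\var(\|X_1\|_4^4)=o(p^{3/2})$ under Assumption~\ref{A:model} (it equals $p\,\var(Z_{11}^4)$ when $\Sigma=I$) and $\|\Sigma^{1/2}\|_4^4\gtrsim p$, whereas $\check\sigma^2\asymp 1/n$.

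Consequently, the CLT $(\tilde T-\kappa)/\tilde\sigma\xrightarrow{\mathcal{L}}N(0,1)$ that you set out to prove is false in general. At the scale $\tilde\sigma\asymp 1/p$, the terms you discarded as $o(n^{-1/2})$ are of the same order as $\tilde\sigma$:
\begin{itemize}
\item For $\Sigma=I$ and $\mu_4=\E(Z_{11}^4)$, one computes $\E\big(\tilde\E(\|X_1\|_4^4)-\mu_4\sum_j\tilde\Sigma_{jj}^2\big)=-2\mu_4(\mu_4-1)p/n\asymp 1$. Since $3+\kappa=\mu_4$, this is a shift of order $\tilde\sigma$ in $\tilde T-\kappa$.
\item The denominator error $\|\tilde{\mathfrak S}^{1/2}\|_4^4-\|\Sigma^{1/2}\|_4^4$ is also of order one, coming from $\sum_j\tilde\Sigma_{jj}^2$ and the positive-part weight $\tilde s^2$.
\item The linear $\tilde\Sigma_{jj}$ terms move the variance away from $\tilde\sigma^2$, since $\var(Z_{11}^4-2\mu_4 Z_{11}^2)\neq\var(Z_{11}^4)$ in general.
\end{itemize}
The same mis-scaling leads you to require $\hat{\var}(\|X_1\|_4^4)/\var(\|X_1\|_4^4)\xrightarrow{\P}1$. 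Your Chebyshev argument for this needs $\E(\|X_1\|_4^{16})$, i.e.\ sixteen moments of $Z_{11}$, which Assumption~\ref{A:model}(a) does not provide.

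The missing observation is that the entire right half is a remainder. Since $\tilde\sigma/\check\sigma\to 0$, you only need $\tilde T-\kappa=o_{\P}(n^{-1/2})$. This follows from three facts: $\var(\|X_1\|_4^4)=o(p^{3/2})$; $\sum_j(\tilde\Sigma_{jj}^2-\Sigma_{jj}^2)=\mathcal{O}_{\P}(1)$; and $\|\tilde{\mathfrak S}^{1/2}\|_4^4-\|\Sigma^{1/2}\|_4^4=o_{\P}(\sqrt n)$, which you correctly identified as the place where ${\tt r}(\Sigma-I)=o(\sqrt p)$ enters. Likewise, the second summand of $\hat\sigma_n^2$ is $o_{\P}(1/n)$ by Markov's inequality, because $\E(\hat{\var}(\|X_1\|_4^4))=\var(\|X_1\|_4^4)=o(p^{3/2})$; no consistency for it is needed. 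This is why the paper normalizes by $\sigma_n=\check\sigma$ alone.

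You make a similar misjudgment on the left side: the degenerate part of the U statistic is \emph{not} of the same order as the linear part. With the paper's kernel $h$, one has $\var(h(X_1,X_2))\lesssim\|\Sigma\|_F^4$ while $\var(h_1(X_1))\asymp\|\Sigma\|_F^4$. So the degenerate part has standard deviation $\mathcal{O}(p/n)=\mathcal{O}(1)$, against $\asymp p/\sqrt n$ for the H\'ajek projection. In particular, $\|\check\Sigma\|_F^2-\frac{2}{n}\tr(\check\Sigma)^2$ affects only the centering, not the fluctuations.

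Hence no martingale CLT or joint bookkeeping of $\tr(\Sigma^2)^2$-type terms is needed. The projection is an i.i.d.\ sum handled by Lyapunov's condition. The constant $4\var^2(\|X_1\|_2^2)/n$ comes from $\var\big((\|X_1\|_2^2-\tr(\Sigma))^2\big)=(2+o(1))\var^2(\|X_1\|_2^2)$, obtained from the CLT for quadratic forms plus uniform integrability of the fourth power (using the $8+\delta$ moments). A correctly executed martingale CLT would also work, but it would show the degenerate piece contributes nothing in the limit, contrary to your expectation that both pieces enter at order $p/\sqrt n$.
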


\noindent {\textbf{Remarks.}}  To comment on Assumption~\ref{A:model}, the $(8+\delta)$-moment condition in part (a) makes it possible to establish the consistency of the variance estimate $\hat\sigma_n^2$, which involves eighth-degree polynomial functions of the data.  Part (c) arises from technical aspects of proving asymptotic normality of $\hat\Delta_n$. Notably, this part does not restrict the eigenvectors of $\Sigma$ at all, because ${\tt{r}}(\Sigma-I)$ can be expressed as a function of $(\lambda_1(\Sigma)-1,\dots,\lambda_p(\Sigma)-1)$.
 To provide some interpretation for the condition ${\tt{r}}(\Sigma-I)=o(\sqrt p)$, it may be viewed as a much weaker version of the condition that $\Sigma$ is a ``spiked covariance matrix''.  The simplest example of a spiked covariance matrix has the form $\Sigma=I+vv\ttop$ with $v\in\R^p$, which is to say that $\Sigma$ is a rank-1 perturbation of the identity. By contrast, part (c) allows for far more general matrices of the form $\Sigma=I+A$, where $A\in \R^{p\times p}$ may be \emph{full rank} and need not even be positive semidefinite. Note too that the possibility of ${\tt{r}}(A)\to\infty$ as $p\to\infty$ is allowed. For context, it is also worth emphasizing that conditions similar to part (c) commonly appear elsewhere in the literature on high-dimensional IC models~\citep[e.g.][]{yao2015sample, johnstone2018pca}. Lastly, the experiments in Section~\ref{sec:simu} illustrate that the test can perform well in settings that are even broader than Assumption~\ref{A:model}.

\section{Numerical results}
\label{sec:simu}

In this section, we evaluate the empirical performance of the proposed test in a suite of high-dimensional settings with simulated data. Section~\ref{sec:simu,separable} looks at level control, while Section~\ref{sec:ellipalter} demonstrates the power of our test in detecting departures from IC models. In all simulation settings, under the null or alternative, we compute the empirical rejection rates over 500 Monte Carlo trials using a nominal level of $\alpha=5\%$, and we fix $n=400$ while allowing $p\in\{100,400,600\}$. The code for implementing the test is available at ~\url{github.com/mingshuostat/ic-model-test}.

\vspace{0.5em}
\subsection{Level control}
\label{sec:simu,separable}

Here, we generate data under the null hypothesis of an IC model, so that the observations $X_i=\Sigma^{1/2} Z_i$, $i=1,\dots,n$, are i.i.d., with each vector $Z_i$ having centered and standardized i.i.d.~entries. We consider 36 settings corresponding to the  three aforementioned values for $p$, as well as four different distributions for $Z_{11}$, and three different structures for $\Sigma$.\\

\noindent\textbf{Covariance structures.} The three choices for $\Sigma$ are as follows.
\begin{enumerate}[(I)]
    \item  $\Sigma = I$.
    \item  $\Sigma_{ij}=0.3^{|i-j|}$.
    \item The eigenvalues of $\Sigma$ satisfy $\lambda_j(\Sigma)=j^{-1/4}$ for $j=1,\dots,p$, and the eigenvectors are drawn uniformly at random from the set of $p\times p$ orthogonal matrices.
\end{enumerate}

\noindent The covariance matrix (III) serves to illustrate the performance of our test when the theoretical conditions in Assumption~\ref{A:model} are violated. Namely, the smallest eigenvalue  $\lambda_{\min}(\Sigma)=p^{-1/4}$ is not bounded away from $0$ as $p\to\infty$, and also, ${\tt{r}}(\Sigma-I)$ is not $o(p^{1/2})$ because ${\tt{r}}(\Sigma-I)=\frac{1}{1-p^{-1/4}}\sum_{j=1}^p (1-j^{-1/4})\asymp p$. \\

\noindent\textbf{Component distributions.}
We generate $Z_{11}$ in four ways by standardizing the following distributions to satisfy  $\E(Z_{11})=0$ and $\mathrm{var}(Z_{11})=1$.

\begin{enumerate}[(1)]
    \item $t_{15}$,
    \item $\mathrm{Beta}(2,5)$,
    \item Laplace(0,1),
    \item Uniform(-1,1).
\end{enumerate}

\noindent\textbf{Results.}
Table~\ref{tab:separablesize} reports the empirical size (rejection rate) of the proposed test under all of the settings of the null hypothesis just described. In all cases, the test comes within about 2\% of the nominal level.

\begin{table}[H]
\centering
\caption{Empirical size of proposed test with nominal level $\alpha = 5\%$, $n = 400$.}
\label{tab:separablesize}
\begin{tabular}{c|cccc|cccc|cccc}
\hline\hline
& \multicolumn{4}{c|}{$p = 100$} & \multicolumn{4}{c|}{$p = 400$} & \multicolumn{4}{c}{$p = 600$} \\
& (1) & (2) & (3) & (4) & (1) & (2) & (3) & (4) & (1) & (2) & (3) & (4) \\
\hline
(I)   & .050 & .043 & .039 & .050 & .042 & .042 & .050 & .068 & .044 & .036 & .043 & .065 \\
(II)  & .044 & .044 & .034 & .058 & .038 & .055 & .043 & .045 & .039 & .046 & .044 & .054 \\
(III) & .047 & .047 & .039 & .049 & .040 & .032 & .039 & .063 & .046 & .039 & .053 & .066 \\
\hline
\end{tabular}
\end{table}

\subsection{Power}
\label{sec:ellipalter}
We now study the power of the proposed test in detecting violations of IC models. In these experiments, we do not make comparisons with the power of the goodness-of-fit tests for IC models in~\citep{matteson2017independent,hallin2024consistent}, because they are not intended for high-dimensional settings. (Recall from the introduction that all of the empirical examples in these works deal with $p\leq 20$.) \\

\noindent{\textbf{Alternative distributions.} }To construct a family of distributions with varying degrees of separation from an IC model, we use an interpolation defined through a parameter $h\in[0,1]$, which starts at an IC model with $h=0$, and ends at an elliptical model with $h=1$. Specifically, for a given choice of $h$, we consider i.i.d.~data $X_1,\dots,X_n\in\R^p$ such that
\begin{equation*}\label{eqn:alt}
X_1 = \Sigma^{1/2}(\sqrt{1-h}Z_1+\sqrt h \eta_1 U_1),
\end{equation*}
where $Z_1\in\R^p$ has i.i.d.~$N(0,1)$ entries, and $(\eta_1,U_1)\in\R^{p+1}$ is a vector independent of $Z_1$, such that $U_1\in\R^p$ is uniformly distributed on the unit sphere, and $\eta_1\geq 0$ is a random variable independent of $U_1$ that is normalized as $\E(\eta_1^2)=p$. This interpolation ensures that $\E(X_1)=0$ and $\E(X_1X_1\ttop)=\Sigma$ for every choice of $h$. We also allow $\Sigma$ to vary among the three structures (I), (II), (III) given in the previous subsection, and we consider the  following three choices for the distribution $\eta_1$ (defined through its square),
\begin{enumerate}[(1)]
\setcounter{enumi}{4}
\item \quad $\eta_1^2 \sim \mathrm{Poisson}(p)$
\item \quad $\eta_1^2 \sim (p+4)\,\mathrm{Beta}(p/2,2)$
\item \quad $\eta_1^2 \sim (p-20)\,F(p,p/10)$.
\end{enumerate}
For each setting of $\Sigma$ and $\eta_1$, we select values of $h$ having the form $h=cg$, where $g\in \{0.3,0.4,\dots,0.8\}$, and $c$ is a multiplier that is chosen to yield comparable rejection rates across the three choices of $\eta_1$. Namely, we take $c=1.2$ in case (5), $c=1$ in case (6), and $c=0.3$ in case (7).\\

\noindent{\textbf{Results.}} 
Tables~\ref{tab:underellip_h_uniform}-\ref{tab:underellip_h_ellip3} separately report empirical rejection rates in settings corresponding to each of the three covariance structures (I)-(III). The results are intuitive in that the rejection rates increase monotonically with the separation from the IC model (measured by $g$), and also, in that the rejection rates approach 1 when $g$ is sufficiently large. In addition, it is notable that for a fixed choice of $g$ and $\eta_1$, the power of the test is broadly stable with respect to variations in the choices of $p$ and $\Sigma$.

\begin{table}[!htbp]
\centering
\caption{Empirical rejection rates for covariance structure (I) with nominal level $\alpha = 5\%$.}
\label{tab:underellip_h_uniform}
\begin{tabular}{c|ccc|ccc|ccc}
\hline\hline
\multirow{2}{*}{$g$} & \multicolumn{3}{c|}{$p = 100$} & \multicolumn{3}{c|}{$p = 400$} & \multicolumn{3}{c}{$p = 600$} \\
& (5) & (6) & (7) & (5) & (6) & (7) & (5) & (6) & (7) \\
\hline
0.3 & .079 & .116 & .203 & .081 & .148 & .125 & .105 & .144 & .120 \\
0.4 & .204 & .349 & .417 & .231 & .419 & .288 & .242 & .425 & .261 \\
0.5 & .448 & .766 & .677 & .497 & .850 & .543 & .522 & .831 & .497 \\
0.6 & .826 & .990 & .852 & .880 & .998 & .778 & .889 & .997 & .735 \\
0.7 & .993 & 1 & .949 & .997 & 1 & .945 & .996 & 1 & .908 \\
0.8 & 1 & 1 & .985 & 1 & 1 & .987 & 1 & 1 & .986 \\
\hline
\end{tabular}
\end{table}

\begin{table}[!htbp]
\centering
\caption{Empirical rejection rates for covariance structure (II) with nominal level $\alpha = 5\%$.}
\label{tab:underellip_h_correlated}
\begin{tabular}{c|ccc|ccc|ccc}
\hline\hline
\multirow{2}{*}{$g$} & \multicolumn{3}{c|}{$p = 100$} & \multicolumn{3}{c|}{$p = 400$} & \multicolumn{3}{c}{$p = 600$} \\
& (5) & (6) & (7) & (5) & (6) & (7) & (5) & (6) & (7) \\
\hline
0.3 & .076 & .111 & .160 & .081 & .109 & .106 & .083 & .131 & .095 \\
0.4 & .165 & .281 & .346 & .158 & .304 & .207 & .170 & .307 & .200 \\
0.5 & .347 & .593 & .576 & .364 & .682 & .417 & .381 & .656 & .360 \\
0.6 & .652 & .917 & .762 & .706 & .966 & .675 & .718 & .959 & .622 \\
0.7 & .931 & .997 & .909 & .955 & 1 & .853 & .964 & 1 & .820 \\
0.8 & .998 & 1 & .957 & 1 & 1 & .957 & 1 & 1 & .944 \\
\hline
\end{tabular}
\end{table}

\begin{table}[!htbp]
\centering
\caption{Empirical rejection rates for covariance structure (III) with nominal level $\alpha = 5\%$.}
\label{tab:underellip_h_ellip3}
\begin{tabular}{c|ccc|ccc|ccc}
\hline\hline
\multirow{2}{*}{$g$} & \multicolumn{3}{c|}{$p = 100$} & \multicolumn{3}{c|}{$p = 400$} & \multicolumn{3}{c}{$p = 600$} \\
& (5) & (6) & (7) & (5) & (6) & (7) & (5) & (6) & (7) \\
\hline
0.3 & .073 & .107 & .185 & .094 & .133 & .116 & .092 & .146 & .097 \\
0.4 & .178 & .304 & .378 & .210 & .351 & .247 & .193 & .344 & .220 \\
0.5 & .398 & .695 & .624 & .446 & .754 & .493 & .459 & .748 & .432 \\
0.6 & .751 & .966 & .818 & .811 & .993 & .727 & .806 & .987 & .663 \\
0.7 & .980 & 1 & .930 & .982 & 1 & .899 & .987 & 1 & .860 \\
0.8 & 1 & 1 & .975 & 1 & 1 & .976 & 1 & 1 & .967 \\
\hline
\end{tabular}
\end{table}

\section{Illustration with gene-expression data}
\label{sec:realdata}

Gene expression data provide a natural illustration for the performance of our goodness-of-fit test, because such data are inherently high-dimensional, and also, because independent component analysis has become a popular tool in the genomic literature~\citep{wang2021independent, urzua2021improving, sastry2021independent, anglada2022robustica}. This section presents results based on RNA-seq data derived from the Genotype-Tissue Expression (GTEx) project, which has played a prominent role in modern research on the links between genetic variation and human health~\citep{lonsdale2013genotype}.

We use four datasets consisting of RNA-seq expression profiles corresponding to four tissue types (testis, colon–sigmoid, stomach, and pancreas), which were processed by~\cite{khunsriraksakul2022integrating} and are available at~\url{github.com/mingshuostat/ic-model-test}. The respective values of $(n,p)$ for the datasets are (272, 35007), (265, 24483), (260, 24290) and (244, 22615). From each of the associated $n\times p$ data matrices, we randomly generated 100 smaller matrices of size $n\times d$ with $d\in\{10,20,\dots,300\}$ by sampling columns without replacement.  Thus, a total of $4\times 30\times 100=12000$ data matrices were generated. Next, we centered the rows in every such data matrix with the corresponding sample mean vector, and applied the proposed test to compute a p-value. This resulted in 100 p-values for each pair $(n,d)$ and tissue type. 

In Figure~\ref{fig:real,compare}, the blue curves display the median of the 100 p-values as a function of $d$ for each tissue type. All the blue curves exhibit a similar decreasing pattern. Namely, when $d=10$, the median p-values are between 0.2 and 0.6, and then they steadily decrease until they are close to 0 for $d\geq 70$, suggesting that the test is well-suited to high-dimensional settings. The fact that the p-values stabilize for large $d$ is also consistent with our simulation results in the previous section, which show a stable relationship between power and dimension when the dimension is at least 100.

To make the goodness-of-fit problem more challenging, we repeated the previous p-value computations using ``noisy'' versions of the data. Specifically, we computed the singular value decomposition of each of the $n\times d$ matrices described above, and then replaced the largest $d/5$ of the singular values with 0. This can be interpreted as removing the top 20\% of the principal components, so that the modified data may be viewed informally as ``noisy residuals''.  Similar approaches based on residuals are also used in~\citep{matteson2017independent} and~\citep{hallin2024consistent} for the purpose of constructing more difficult goodness-of-fit problems from natural data.

Analogously to the original data, we obtained a set of 100 p-values for each pair $(n,d)$ and tissue type from the noisy data. 
 In Figure~\ref{fig:real,compare}, we use orange curves to display the noisy-data counterparts of the blue curves. For nearly all values of $d$, each orange curve exceeds its associated blue curve, which conforms with the idea that noisier data increase the difficulty of a goodness-of-fit problem. Apart from having larger values, the orange curves exhibit an overall similarity with the blue curves, in that they take relatively large values when $d$ is small and approach 0 when $d$ becomes large. As with the original data, these results for the noisy data provide a meaningful indication that the test can be powerful in high-dimensional settings.

\begin{figure}[H]
\centering
\begin{overpic}[width=\linewidth]{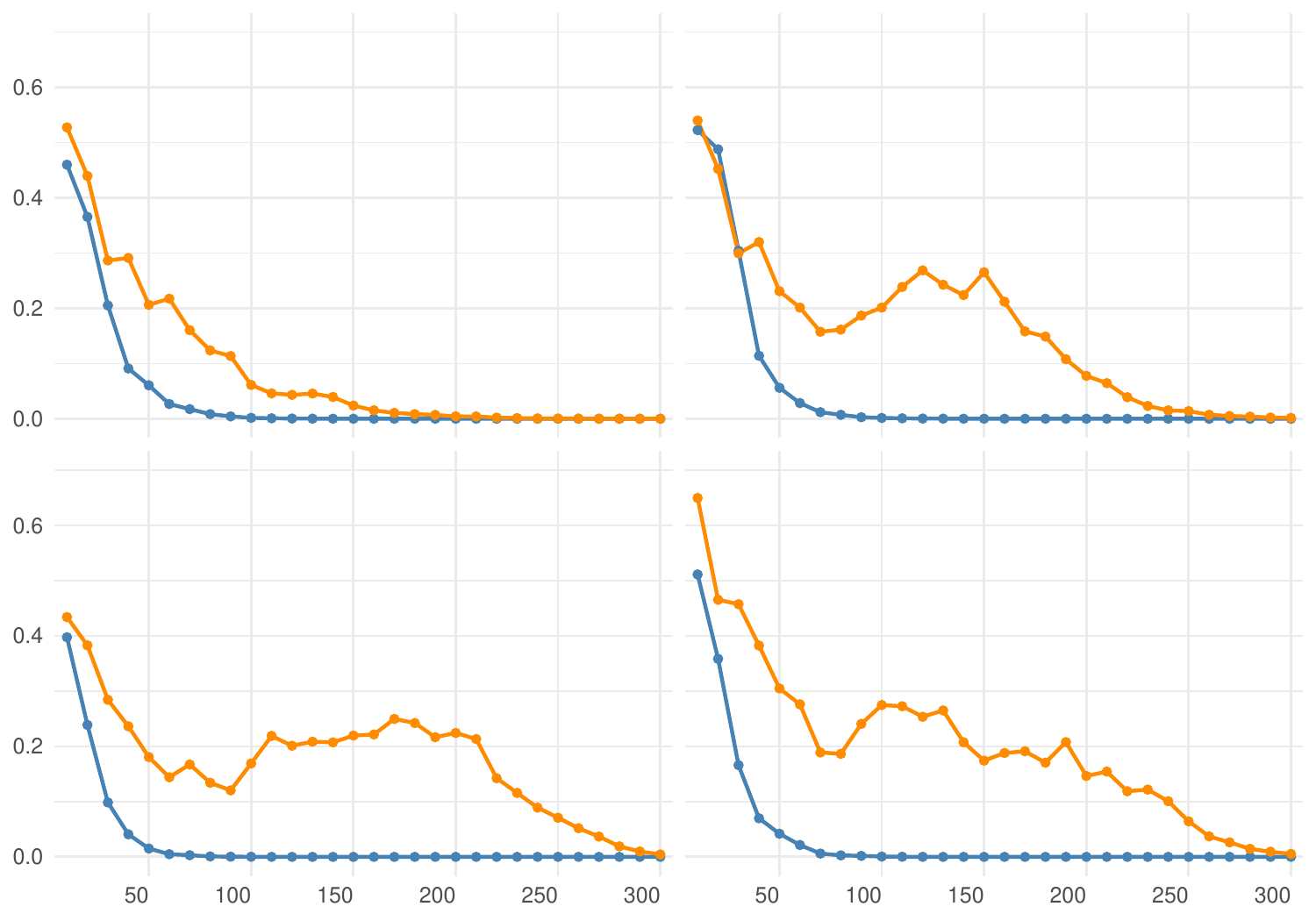}

\put(30,66){\makebox(0,0){ Testis}}
\put(77,66){\makebox(0,0){ Colon-Sigmoid}}
\put(30,33){\makebox(0,0){ Stomach}}
\put(77,33){\makebox(0,0){ Pancreas}}

\put(30,-1){\makebox(0,0){\small dimension $d$}}
\put(78,-1){\makebox(0,0){\small dimension $d$}}
\put(-3,50){\rotatebox{90}{\makebox(0,0){\small median $p$-value}}}
\put(-3,15){\rotatebox{90}{\makebox(0,0){\small median $p$-value}}}

\put(77,60){\colorbox{white}{\footnotesize\textcolor{PCOrange}{${}^{{}_{\rule{10mm}{2pt}}}$}\;\;noisy data}}
\put(77,57){\colorbox{white}{\footnotesize\textcolor{RawBlue}{${}^{{}_{\rule{10mm}{2pt}}}$}\; original data}}

\end{overpic}
\vspace{0.1cm}

\caption{
Median $p$-values versus gene-subset size $d\in\{10,20,\ldots,300\}$ for four tissues: Testis, Colon--Sigmoid, Stomach, Pancreas. 
}
\label{fig:real,compare}
\end{figure}

\section*{Acknowledgements}
We are grateful to Xin Bing, Derek Latremouille, 
David Matteson, Klaus Nordhausen and Lida Wang for helpful correspondence.

\bibliographystyle{chicago}

\bibliography{library}

\newpage
\appendix
\section*{\centering Supplementary Material\\ A Goodness-of-Fit Test for Independent Component Models in High Dimensions}

 Appendix \ref{app:proofmain} contains the proof of Theorem~\ref{thm:main}, which is organized into three subsections: Section~\ref{sec:kappa1,asym}  proves the asymptotic normality of a statistic that approximates $\hat\Delta_n$. Section~\ref{sec:kappa2,asym} establishes the negligibility of remainder terms. Section~\ref{sec:sigma,cons} proves the ratio consistency of the variance estimate $\hat\sigma_n^2$. Lastly, Appendix~\ref{sec:app,background} contains various background results.\\

\noindent\textbf{Notation.} Here we define some additional notation that is needed for the supplement and is not already defined in the notation paragraph of Section~\ref{sec:intro}. We use $\xrightarrow{\P}$ to denote convergence in probability. For sequences of random variables $\{V_n\}$ and $\{W_n\}$, the relation $W_n = o_{\P}(V_n)$ means that $W_n/V_n \xrightarrow{\P} 0$, and $W_n = \mathcal{O}_{\P}(V_n)$ means that for every fixed $\epsilon>0$, there exists a constant $C>0$ not depending on $n$, such that $\sup_{n\ge1}\P(|W_n/V_n|\geq C)\leq\epsilon$. For any fixed $q\geq 1$, the $L^q$ norm of a random variable $V$ is given by $\|V\|_{L^q}=(\E(|V|^q))^{1/q}$.

\section{Proof of Theorem \ref{thm:main}}
\label{app:proofmain}
Define the parameter
\begin{equation}\label{eqn:sigmandef}
\sigma_{n}^2
  =
  \frac{4\,\var^2(\|X_1\|_2^2)}{n\,(\sum_{j=1}^p \Sigma_{jj}^2)^2}.
\end{equation}
In Section~\ref{sec:sigma,cons}, we show that the limit
\begin{align*}
\frac{\hat \sigma_n^2}{\sigma_{n}^2} \overset{\P}{\rightarrow} 1
\end{align*}
holds as $n\to\infty$.
Thus, the proof of Theorem~\ref{thm:main} reduces to showing
\begin{align*}
\frac{\hat\Delta_n}{\sigma_{n}} \xrightarrow{\mathcal{L}} N(0,1)
\end{align*}
as $n\to\infty$. To this end, consider the decomposition of $\hat\Delta_n$ given by
\begin{equation*}
    \hat\Delta_n = D_n+\epsilon_{n,1} - \epsilon_{n,2}-\epsilon_{n,3},
\end{equation*}
where we define the random variables 
\begin{equation}\label{eqn:decompdefs}
    \begin{split}
        D_n&=\frac{\check{\var}(\|X_1\|_2^2)-\var(\|X_1\|_2^2)-2(\|\check\Sigma\|_F^2-\|\Sigma\|_F^2-\frac{1}{n/2}\tr(\check\Sigma)^2)}{\sum_{j=1}^p\check\Sigma_{jj}^2}\\[0.4cm]
        \epsilon_{n,1}&=\frac{\big(\var(\|X_1\|_2^2)-2\|\Sigma\|_F^2\big)\sum_{j=1}^p (\Sigma_{jj}^2 - \check \Sigma_{jj}^2)}{(\sum_{j=1}^p \Sigma_{jj}^2)(\sum_{j=1}^p \check\Sigma_{jj}^2)}  \\[0.4cm]
    \epsilon_{n,2} &=\ts\frac{1}{\|\Sigma^{1/2}\|_4^4}\Big(\tilde\E(\|X_1\|_4^4)-\E(\|X_1\|_4^4)- 3\sum_{j=1}^p (\tilde\Sigma_{jj}^2-\Sigma_{jj}^2)\Big)\\[0.4cm]
    \epsilon_{n,3} &=  \Big(\tilde\E(\|X_1\|_4^4)-3 \sum_{j=1}^p \tilde \Sigma_{jj}^2\Big) \Big(\ts \frac{1}{\| \tilde{\mathfrak{S}}^{1/2}\|_4^4 } - \frac{1}{\|  \Sigma^{1/2}\|_4^4 } \Big).
    \end{split}
\end{equation}
In Proposition~\ref{prop:kappa1CLT} of Section~\ref{sec:kappa1,asym}, we show that  $\frac{D_n}{\sigma_n}\xrightarrow{\mathcal{L}} N(0,1)$ as $n\to\infty$.
Finally, in Section~\ref{sec:kappa2,asym}, we show that
    $\frac{\epsilon_{n,k}}{\sigma_n} = o_{\P}(1)$
holds for $k=1,2,3$ in Lemmas~\ref{lem:eps1},~\ref{lem:eps2},~\ref{lem:eps3} respectively, which completes the proof.

\qed

\subsection{Asymptotic normality of $D_n$}
\label{sec:kappa1,asym}

\begin{proposition}
\label{prop:kappa1CLT}
If Assumption \ref{A:model} holds, then as $n \rightarrow \infty$,
\begin{align*}
\frac{D_n}{ \sigma_{n}} \xrightarrow{\mathcal{L}} N(0,1).
\end{align*}    
\end{proposition}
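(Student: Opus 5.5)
The plan is to reduce $D_n$ to a degenerate-plus-linear U-statistic in the first half-sample $X_1,\dots,X_m$, $m=n/2$, prove a CLT for that U-statistic via a martingale CLT, and then use Slutsky.

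\textbf{Step 1: replace the denominator.} Since $\sum_j\Sigma_{jj}^2\asymp p$ and $\var(\check\Sigma_{jj})\lesssim 1/n$, we have $\sum_j\check\Sigma_{jj}^2/\sum_j\Sigma_{jj}^2\to1$ in probability. It therefore suffices to prove the CLT for the numerator $N_n$ divided by $\sigma_n\sum_j\Sigma_{jj}^2$. Write $Q_i=\|X_i\|_2^2$, $\mu=\tr\Sigma$, and $v=\var(Q_1)$.

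\textbf{Step 2: Hoeffding decomposition of $N_n$.}
\begin{itemize}
\item Expanding the sample variance gives $\check\var(Q)-v=\frac1m\sum_i[(Q_i-\mu)^2-v]$ plus a cross term of order $\mathcal{O}_{\P}(v/m)$.
\item Expanding $\|\check\Sigma\|_F^2=\frac{1}{m^2}\sum_{i,k}(X_i\ttop X_k)^2$ gives diagonal terms $\frac{1}{m^2}\sum_i Q_i^2$ and off-diagonal terms $\frac{1}{m^2}\sum_{i\neq k}(X_i\ttop X_k)^2$.
\item Expanding $\frac1m\tr(\check\Sigma)^2=\frac{1}{m^3}\sum_{i,k}Q_iQ_k$ similarly.
\end{itemize}
The bias correction is designed so that the means cancel to leading order.

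Collecting the linear (first-order Hoeffding) parts, the candidate leading term is
\[
L_n=\frac1m\sum_{i\le m}\Big[(Q_i-\mu)^2-v-\tfrac{2}{m}\big(Q_i^2-\E Q_1^2\big)+\tfrac{4}{m}\,\ldots\Big]
\]
together with the degenerate second-order part
\[
U_n=\frac{2}{m^2}\sum_{i\neq k}\Big[(X_i\ttop X_k)^2-\|\Sigma\|_F^2-\text{projections}\Big].
\]
I would compute variances using the quadratic-form formula of Lemma~\ref{lemma:covZMZ}, together with its higher-moment analogues, which are available under $8+\delta$ moments.

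\textbf{Step 3: identify the dominant piece.} $\var((Q_1-\mu)^2)\asymp v^2\asymp p^2$, since $v\asymp p$ by Lemma~\ref{lemma:covZMZ} and $Q_1-\mu$ is approximately normal with variance $v$. Thus the variance of the leading linear term is approximately $2v^2/m=4v^2/n$, which matches $\sigma_n^2(\sum_j\Sigma_{jj}^2)^2$. This is the key consistency check. The other linear pieces, such as $\frac1m(Q_i^2-\E Q_i^2)$, have variance $p^4/m^2\cdot\frac{1}{m}$. This is of the same order as $p^2/n$ when $p\asymp n$, so they cannot simply be dropped. Instead I expect them to cancel against the linear parts of $\frac1{m^2}\sum_i Q_i^2$ and $\frac{1}{m^3}\sum Q_iQ_k$, with $\mathcal{O}(p\cdot p/n)$ leftovers whose projection onto $Q_i$ combines in the corrected statistic.

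The degenerate part $U_n$ has variance about $\frac{1}{m^2}\E[(X_1\ttop X_2)^2-\ldots]^2\asymp \frac{1}{n^2}\|\Sigma\|_F^4\cdot\mathcal{O}(1)\asymp p^2/n^2$. Its $\mathcal{O}(1)$ factor is the one to verify; if it is not smaller than $4v^2/n\asymp p^2/n$, it must be handled.

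\textbf{Step 4: Assumption~\ref{A:model}(c) and the main obstacle.} I expect the main obstacle to be exactly this accounting. I would write $\Sigma=I+A$ with ${\tt r}(A)=o(\sqrt p)$ and show that every cross term involving $A$, such as $\tr(A\Sigma)$ or $\sum_j(\Sigma^{1/2}A\Sigma^{1/2})_{jj}^2$ arising in the fourth-moment expansions of $(X_i\ttop X_k)^2$, contributes at relative order $o(1)$. The bound I would use is $\|A\|_F^2\le\|A\|_{\op}\|A\|_*=o(\sqrt p)$. The role of this is to reduce the $\Sigma$-dependent remainder variances to those of the identity case, where they can be computed explicitly and shown to be $o(p^2/n)$ or exactly cancelled.

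\textbf{Step 5: the CLT.} With $N_n=L_n+R_n$, where $\var(R_n)=o(p^2/n)$, the leading term $L_n$ is a sum of i.i.d. terms. I would apply the Lyapunov CLT to the standardized $(Q_i-\mu)^2/v$: its $(2+\delta/4)$-moment is bounded using Rosenthal-type bounds on $\|Q_1-\mu\|_{L^{4+\delta/2}}\lesssim\sqrt p$, which is where the $8+\delta$ moment assumption enters. If part of the degenerate piece turns out to be non-negligible, I would instead use a martingale CLT: take the filtration generated by $X_1,\dots,X_k$, and verify the conditional-variance and Lindeberg conditions through fourth moments of $X_i\ttop X_k$. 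Slutsky's theorem then finishes the proof.
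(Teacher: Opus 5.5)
Your architecture is the paper's: the same leading term $\frac1m\sum_{i\le m}[(Q_i-\mu)^2-v]$ from the first half-sample, Lyapunov with $(4+\delta/2)$-moment bounds, and Slutsky for $\sum_j\check\Sigma_{jj}^2$. But the proposal does not close as written. The remainder accounting you flag as the main obstacle rests on an order-of-magnitude slip, and the tool you propose for it (Step~4) is not what is needed.

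Since $Q_1=\mu+\mathcal{O}_{\P}(\sqrt p)$, one has $\var(Q_1^2)\asymp\mu^2 v\asymp p^3$, not $p^4$. So $\frac{1}{m^2}\sum_i(Q_i^2-\E Q_1^2)$ has variance of order $p^3/m^3\asymp 1$. In fact $\frac{1}{m^2}\sum_iQ_i^2-\frac{1}{m^3}(\sum_iQ_i)^2=\frac{1}{m^3}\sum_{i<k}(Q_i-Q_k)^2$ is shift invariant, so the $\mu$-linear parts cancel identically. The remaining pieces are also small:
\begin{itemize}
\item The linear part of $\frac{1}{m^2}\sum_{i\neq k}(X_i\ttop X_k)^2$ is $\frac{2(m-1)}{m^2}\sum_i(X_i\ttop\Sigma X_i-\|\Sigma\|_F^2)$, with variance $\lesssim\tr(\Sigma^4)/m\lesssim 1$.
\item The degenerate remainder has variance $\lesssim\|\Sigma\|_F^4/m^2\lesssim 1$. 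This is $o(p^2/n)$ regardless of its constant, since $p^2/n\asymp n$.
\item The residual bias is $\mathcal{O}(p/n)=\mathcal{O}(1)$.
\end{itemize}
All of this is negligible against the leading standard deviation $2v/\sqrt n\asymp\sqrt n$. So you need no cancellation argument, no martingale CLT, and no use of ${\tt{r}}(\Sigma-I)=o(\sqrt p)$. This proposition uses only the eigenvalue bounds and $p\asymp n$; the paper invokes the effective-rank condition only for the $\|\cdot\|_4$-based terms.

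The paper packages this bookkeeping by writing $U_n$ exactly as a second-order U-statistic over $X_1,\dots,X_{n/2}$. Its kernel is $h(X_1,X_2)=(\frac12-\frac{2(n-2)}{n^2})(Q_1-Q_2)^2-\frac{2(n-2)}{n}(X_1\ttop X_2)^2$, and the paper checks that $\var(h)\lesssim\|\Sigma\|_F^4$ while $\var(h_1)\asymp\|\Sigma\|_F^4$. The H\'ajek projection then carries asymptotically all of the variance.

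The step that does need an argument is one you pass over quickly: the exact constant in $\var((Q_1-\mu)^2)=(2+o(1))v^2$. This is what makes the leading variance equal $\sigma_n^2(\sum_j\Sigma_{jj}^2)^2=4v^2/n$. It also supplies the lower bound on $\var(h_1)$ needed in the Lyapunov ratio.

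Approximate normality of $(Q_1-\mu)/\sqrt v$ gives only convergence in distribution. To conclude $\E(Q_1-\mu)^4=(3+o(1))v^2$, you need two ingredients:
\begin{itemize}
\item the quadratic-form CLT of Lemma~\ref{lem: quadra normal}, which applies because $\|\Sigma\|_F/\|\Sigma\|_{\op}\to\infty$;
\item uniform integrability, from $\E|Q_1-\mu|^{4+\delta/2}\lesssim v^{2+\delta/4}$ (Lemma~\ref{lem:Bai,quadra}). This is the same bound you already invoke in Step~5.
\end{itemize}
This is exactly the paper's Lemma~\ref{lem:varnormsq}. With it, and with the corrected order counting above, your outline becomes the paper's proof.
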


\noindent \textit{Proof.} 
Define the statistic
\begin{align}\label{eqn:Udef}
& U_n = \check{\var}(\|X_1\|_2^2)-2\big(\|\check\Sigma\|_F^2-\ts\frac{1}{n/2}\tr(\check\Sigma)^2\big),
\end{align}
so that  $D_n/\sigma_n$ can be expressed as

\begin{equation}
    \frac{D_n}{\sigma_n}=\frac{\sqrt n\big(U_n - \var(\|X_1\|_2^2)+2\|\Sigma\|_F^2\big)}{ 2\var(\|X_1\|_2^2)}\cdot \frac{\sum_{j=1}^p\Sigma_{jj}^2}{\sum_{j=1}^p \check \Sigma_{jj}^2}. 
\end{equation}
Regarding the second factor on the right side, it follows from Lemma~\ref{lem:sumhatsjj2}  that
\begin{equation}
    \frac{\sum_{j=1}^p\Sigma_{jj}^2}{\sum_{j=1}^p \check \Sigma_{jj}^2} = 1+o_{\P}(1). 
\end{equation}
The proof is completed by Lemma~\ref{lem: U normality}, which shows that
\begin{equation}
    \frac{\sqrt n\big(U_n - \var(\|X_1\|_2^2)+2\|\Sigma\|_F^2\big)}{ 2\var(\|X_1\|_2^2)}\xrightarrow{\mathcal{L}} N(0,1)
\end{equation}
as $n\to\infty$.

\qed

\begin{lemma}
\label{lem:sumhatsjj2}
If Assumption \ref{A:model} holds, then as $n\to\infty$, 
\begin{align*}
\sum_{j=1}^p \hat\Sigma_{jj}^2 - \Sigma_{jj}^2  = \mathcal{O}_{\P}(1) \ \ \text{  \ \ and } \qquad \frac{\sum_{j=1}^p \hat\Sigma_{jj}^2}{\sum_{j=1}^p \Sigma_{jj}^2} \xrightarrow{\P} 1.
\end{align*} 
In addition, the same statements hold when $\hat\Sigma$ is replaced by $\tilde\Sigma$ or $\check\Sigma$.
\end{lemma}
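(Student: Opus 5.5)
We will prove $\sum_{j}(\hat\Sigma_{jj}^2-\Sigma_{jj}^2)=\mathcal{O}_{\P}(1)$ by a mean-plus-variance (Chebyshev) argument, and then derive the ratio statement from it. Write $\hat\Sigma_{jj}-\Sigma_{jj}=\frac1n\sum_{i=1}^n(X_{ij}^2-\Sigma_{jj})=:\delta_j$. Then
\[
\sum_{j=1}^p \hat\Sigma_{jj}^2-\Sigma_{jj}^2=\sum_{j=1}^p 2\Sigma_{jj}\delta_j+\sum_{j=1}^p\delta_j^2 .
\]

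For the second term, $\E(\delta_j^2)=\frac1n\var(X_{1j}^2)$. Let $a_j=\Sigma^{1/2}e_j$, so that $X_{1j}=a_j\ttop Z_1$ and $\|a_j\|_2^2=\Sigma_{jj}\lesssim1$. The standard moment bound for linear forms then gives $\E(X_{1j}^4)\lesssim\|a_j\|_2^4\lesssim 1$. Hence $\E\sum_j\delta_j^2\lesssim p/n\lesssim 1$, and Markov's inequality shows this term is $\mathcal{O}_{\P}(1)$.

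For the first term, $L_n:=\sum_j 2\Sigma_{jj}\delta_j=\frac1n\sum_{i=1}^n\big(X_i\ttop D X_i-\tr(D\Sigma)\big)$ with $D=2\,\Diag(\Sigma)$. This is a centered average of i.i.d.\ quadratic forms, so
\[
\var(L_n)=\tfrac1n\var(X_1\ttop DX_1).
\]
By Lemma~\ref{lemma:covZMZ} with $A=D$,
\[
\var(X_1\ttop DX_1)=2\|\Sigma^{1/2}D\Sigma^{1/2}\|_F^2+(\E Z_{11}^4-3)\sum_j\big(e_j\ttop\Sigma^{1/2}D\Sigma^{1/2}e_j\big)^2 .
\]
Both pieces are $\lesssim p$, because $\|\Sigma^{1/2}D\Sigma^{1/2}\|_{\op}\lesssim 1$ and $\E Z_{11}^4\lesssim1$. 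Therefore $\var(L_n)\lesssim p/n\lesssim1$, and $L_n=\mathcal{O}_{\P}(1)$ by Chebyshev. This proves the first claim.

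For the ratio claim, note that $\Sigma_{jj}\ge\lambda_{\min}(\Sigma)\gtrsim1$, so $\sum_j\Sigma_{jj}^2\gtrsim p\to\infty$. Dividing the $\mathcal{O}_{\P}(1)$ difference by this quantity gives
\[
\frac{\sum_j\hat\Sigma_{jj}^2}{\sum_j\Sigma_{jj}^2}=1+\mathcal{O}_{\P}(1/p)\xrightarrow{\P}1 .
\]

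The versions for $\check\Sigma$ and $\tilde\Sigma$ follow from the same argument with $n$ replaced by $n/2$. Since the observations are i.i.d., each half-sample has the same law as a full sample of size $n/2$, and $p/(n/2)\to2\gamma$ is still bounded.

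The only mildly delicate step is the uniform fourth-moment bound $\E(X_{1j}^4)\lesssim1$. It follows from the exact formula $\E(a\ttop Z)^4=3\|a\|_2^4+(\E Z_{11}^4-3)\|a\|_4^4\le(\E Z_{11}^4+3)\|a\|_2^4$, which is the diagonal case of Lemma~\ref{lemma:covZMZ}. Everything else is routine.
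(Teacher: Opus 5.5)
Your proof is correct and follows the paper's argument. You use the same split into $\sum_j\delta_j^2$, controlled by Markov's inequality via $\E(X_{1j}^4)\lesssim 1$, and the linear term $2\sum_j\Sigma_{jj}\delta_j$, controlled by Chebyshev's inequality with variance $\lesssim p/n$. The only minor difference is that you bound the linear term's variance by treating it as the single quadratic form $Z_1^{\top}\Sigma^{1/2}D\Sigma^{1/2}Z_1$ with operator-norm bounds, which is a bit cleaner than the paper's double-sum expansion with the Hadamard-product inequality $\|M^{\circ 2}\|_{\op}\le\|M\|_{\op}^2$.
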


\noindent \textit{Proof.} 
Consider the algebraic identity
\begin{align}\label{eqn:2terms}
\sum_{j=1}^p \hat\Sigma_{jj}^2 -  \Sigma_{jj}^2 &=
 \|\textup{Diag}(\hat\Sigma)-\textup{Diag}(\Sigma)\|_F^2 \ + \ 2\,\tr\Big( \textup{Diag}(\Sigma)[\textup{Diag}(\hat\Sigma)-\textup{Diag}(\Sigma)]\Big).
\end{align}
Because $\sum_{j=1}^p \Sigma_{jj}^2\gtrsim p$ holds under Assumption~\ref{A:model}, the proof will be completed if we can show both terms on the right side of~\eqref{eqn:2terms} are $\mathcal{O}_{\P}(1)$. For the first term, we only need to show that its expectation is $\mathcal{O}(1)$, since it is a non-negative random variable. Notice that
\begin{align*}
\E \big(\|\textup{Diag}(\hat\Sigma) - \textup{Diag}(\Sigma)\|_{F}^2\big) 
= \sum_{j=1}^p \var(\hat\Sigma_{jj}) \,\lesssim\,  \frac{1}{n}\sum_{j=1}^p\E(X_{1j}^4),
\end{align*}
where the inequality is due to the classical formula  for the variance of the sample variance (\citealp[Page 13]{lee2019u}). To handle $\E(X_{1j}^4)$, we use Lemma \ref{lemma:covZMZ} to get
\begin{equation}
\label{eqn:EX1j4}
\begin{aligned}
\E(X_{1j}^4) &= 3\Sigma_{jj}^2 + (\E(Z_{11}^4)-3) \sum_{l=1}^p (\Sigma^{1/2})_{lj}^4\\
&\lesssim \Sigma_{jj}^2 + \Big(\sum_{l=1}^p (\Sigma^{1/2})_{lj}^2\Big)^2\\
&=2\Sigma_{jj}^2\\
&\lesssim 1
\end{aligned}
\end{equation}
where the last step uses $\|\Sigma\|_{\textup{op}} \lesssim 1$ under Assumption \ref{A:model}. Hence, $\E (\|\textup{Diag}(\hat\Sigma) - \textup{Diag}(\Sigma)\|_{F}^2) \lesssim 1$.

For the second term on the right side of~\eqref{eqn:2terms}, we have
\begin{align*}
\tr\Big( \textup{Diag}(\Sigma)[\textup{Diag}(\hat\Sigma)-\textup{Diag}(\Sigma)]\Big)=\frac{1}{n}\sum_{i=1}^n\Big(\sum_{j=1}^p \Sigma_{jj}(X_{ij}^2-\Sigma_{jj})\Big)=:\frac 1n\sum_{i=1}^n \zeta_i,
\end{align*}
where the random variables $\zeta_1,\dots,\zeta_n$ are i.i.d.~with mean 0. It suffices to show that $\frac{1}{n} \var(\zeta_1) \lesssim 1$, and we can calculate  this variance as 
\begin{align*}
\var(\zeta_1) = \sum_{j=1}^p \sum_{k=1}^p \Sigma_{jj} \Sigma_{kk} \cov(X_{1j}^2, X_{1k}^2),
\end{align*}
where
\begin{align*}
\cov(X_{1j}^2, X_{1k}^2) = 2\Sigma_{jk}^2 + ( \E(Z_{11}^4)-3) \sum_{l=1}^p (\Sigma^{1/2})_{lj}^2 (\Sigma^{1/2})_{lk}^2, 
\end{align*}
as implied by Lemma \ref{lemma:covZMZ}. 
It follows that
$$
\var(\zeta_1)\lesssim 
\|\Sigma\|_F^2+ \|(\Sigma^{1/2})^{\circ 2}\mathbf{1}\|_2^2,
$$
where $((\Sigma^{1/2})^{\circ 2})_{ij}:=((\Sigma^{1/2})_{ij})^2$, and $\mathbf{1}\in\R^p$ is the all-ones vector.
Moreover, the assumptions on $\Sigma$ imply that $\|\Sigma\|_F^2\lesssim p$, and it is known that $\|M^{\circ 2}\|_{\textup{op}}\leq\|M\|_{\textup{op}}^2$ holds for any real matrix $M$ \cite[Theorem 5.5.15]{horn1994topics}. Thus,
$$
\|(\Sigma^{1/2})^{\circ 2}\mathbf{1}\|_2^2\leq \big(\|\Sigma\|_{\textup{op}}\|\mathbf{1}\|_2\big)^2\lesssim p,
$$
which completes the proof.

\qed

\begin{lemma}
Let the statistic $U_n$ be as defined in~\eqref{eqn:Udef}. If Assumption \ref{A:model} holds, then as $n \rightarrow \infty$,
\begin{align*}
\frac{\sqrt{n}\big(U_n-\var(\|X_1\|_2^2)+2\|\Sigma\|_F^2\big)}{2\var(\|X_1\|_2^2)}\xrightarrow{\mathcal{L}} N(0,1),
\end{align*}
\label{lem: U normality}
\end{lemma}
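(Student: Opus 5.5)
The plan is to reduce $U_n$ to a sum of i.i.d.\ terms through a Hoeffding-type decomposition, and then apply a Lyapunov CLT to the linear part. Write $m=n/2$, $Y_i=\|X_i\|_2^2$, $\mu=\E(Y_1)=\tr(\Sigma)$ and $v=\var(Y_1)$. By Lemma~\ref{lemma:covZMZ} with $A=I$ we have $v=2\|\Sigma\|_F^2+(\E(Z_{11}^4)-3)\sum_j\Sigma_{jj}^2$. This is $\asymp p$, since $\E(Z_{11}^4)>1$ and $\Sigma_{jj}\asymp 1$ give $v\gtrsim \sum_j\Sigma_{jj}^2$. The target scale is therefore $v/\sqrt n\asymp p/\sqrt n\asymp\sqrt p$, and any term of order $o(\sqrt p)$ in $L^2$ can be discarded.

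\textbf{Step 1 (sample variance).} I would write $\check\var(Y)$ as the U-statistic $\binom m2^{-1}\sum_{i<i'}\tfrac12(Y_i-Y_{i'})^2$; using the $1/m$ normalization instead changes it only by $O_{\P}(v/m)=O_{\P}(1)$. Its Hoeffding projection is $v+\frac1m\sum_{i\le m}\big[(Y_i-\mu)^2-v\big]$. The degenerate part has kernel $-(Y_i-\mu)(Y_{i'}-\mu)$, so its variance is $\asymp v^2/m^2\asymp 1$, which is negligible.

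\textbf{Step 2 (Frobenius term).} Expanding $\|\check\Sigma\|_F^2=m^{-2}\sum_{i,i'}(X_i\ttop X_{i'})^2$ separates the diagonal terms $m^{-2}\sum_i Y_i^2$ from an off-diagonal U-statistic with kernel $(X_i\ttop X_{i'})^2$. The subtracted $\frac1m\tr(\check\Sigma)^2$ cancels the diagonal contribution up to $O_{\P}(\sqrt p/m + v/m)=o_{\P}(\sqrt p)$. Two estimates handle the rest. First, the bias is $O(v/m)=O(1)$. Second, the Hájek projection $X_i\ttop\Sigma X_i-\|\Sigma\|_F^2$ has variance $\lesssim \|\Sigma^2\|_F^2\lesssim p$, so after averaging it contributes $O_{\P}(\sqrt{p/m})=O_{\P}(1)$. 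The degenerate remainder has variance $\lesssim \E(X_1\ttop X_2)^4/m^2\lesssim p^2/m^2$. Hence $2(\|\check\Sigma\|_F^2-\frac1m\tr(\check\Sigma)^2)-2\|\Sigma\|_F^2=O_{\P}(1)$.

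Combining the two steps gives
$$\frac{\sqrt n\,(U_n-v+2\|\Sigma\|_F^2)}{2v}=\frac{\sqrt n}{2m}\sum_{i\le m}\Big[\Big(\tfrac{Y_i-\mu}{\sqrt v}\Big)^2-1\Big]+o_{\P}(1)=\frac{1}{\sqrt{2m}}\sum_{i\le m}(W_i^2-1)+o_{\P}(1),$$
where $W_i=(Y_i-\mu)/\sqrt v$.

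\textbf{Step 3 (CLT).} It then suffices to show that $\E(W_1^4)\to3$, so that $\var(W_1^2)\to2$, and that $\E|W_1|^{4+\delta'}\lesssim1$ for some $\delta'>0$; Lyapunov's CLT for the i.i.d.\ array $W_i^2-1$ then gives the $N(0,1)$ limit. Both facts are statements that the quadratic form $Z_1\ttop\Sigma Z_1$ is asymptotically Gaussian with controlled moments.
\begin{itemize}
\item For the fourth moment, I would expand $\E(Z\ttop\Sigma Z-\tr\Sigma)^4$ by index partitions. The Gaussian pairings give $3v^2$. All other contributions involve higher cumulants of $Z_{11}$ multiplied by sums such as $\sum_j\Sigma_{jj}^4$ or $\sum_{j,k}\Sigma_{jk}^4$, which are $O(p)=o(v^2)$ because $\|\Sigma\|_{\op}\lesssim1$.
\item For the $(4+\delta')$-moment, I would use a Whittle/Rosenthal-type moment inequality for centered quadratic forms, $\E|Z\ttop\Sigma Z-\tr\Sigma|^{q}\lesssim (\E|Z_{11}|^{2q})\,\|\Sigma\|_F^{q}$ with $q=4+\delta/2$. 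This is available because of the $(8+\delta)$-moment condition in Assumption~\ref{A:model}(a).
\end{itemize}

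The main obstacle I expect is making Step 2 and the cross terms fully rigorous. In particular, I need to check that the off-diagonal Frobenius U-statistic and the products between the sample-mean fluctuation $(\bar Y-\mu)^2$ and the other pieces are all $o_{\P}(\sqrt p)$. This requires explicit fourth-moment calculations of $(X_1\ttop X_2)^2$ and of $X_1\ttop\Sigma X_1$ via Lemma~\ref{lemma:covZMZ}, and at that point I would invoke $\lambda_{\max}(\Sigma)\lesssim1$ repeatedly. The condition ${\tt r}(\Sigma-I)=o(\sqrt p)$ should not be needed here. I expect it matters elsewhere, for instance for the $\epsilon_{n,3}$ remainder.
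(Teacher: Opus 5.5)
Your proposal is correct and follows the same architecture as the paper's proof. The shared steps are the H\'ajek projection of the U-statistic, negligibility of the Frobenius piece and of the degenerate remainder, and a Lyapunov CLT for $\sum_{i\le m}(W_i^2-1)$ resting on $\E(W_1^4)\to3$ plus a $(4+\delta/2)$-moment bound. You are also right that ${\tt{r}}(\Sigma-I)=o(\sqrt p)$ plays no role here. The paper packages your Steps 1--2 as a single U-statistic with kernel $h$, proves $\var(L_n)/\var(U_n)\to1$, and invokes van der Vaart's projection theorem. It disposes of the $X_1\ttop\Sigma X_1$ part of $h_1$ via $\tr(\Sigma^4)=o(\|\Sigma\|_F^4)$ and Cauchy--Schwarz; your explicit $L^2$ bounds amount to the same thing.

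The real difference is how you get $\E(W_1^4)\to3$. The paper (Lemma~\ref{lem:varnormsq}) deduces it from the quadratic-form CLT (Lemma~\ref{lem: quadra normal}) plus uniform integrability. It reuses the same $(4+\delta/2)$-moment bound that feeds Lyapunov, so one estimate does double duty. You propose a direct diagram expansion instead. That is self-contained and the eight moments are available, but it needs correct bookkeeping, and as written yours is wrong when $\beta:=\E(Z_{11}^4)-3\neq0$. For $Q=Z_1\ttop\Sigma Z_1-\tr(\Sigma)$:
\begin{itemize}
\item The pure pairings give $12\|\Sigma\|_F^4+48\tr(\Sigma^4)$, not $3v^2$.
\item The non-pairing terms $12\beta\|\Sigma\|_F^2\sum_j\Sigma_{jj}^2+3\beta^2(\sum_j\Sigma_{jj}^2)^2$ are of order $p^2$, not $O(p)$.
\end{itemize}
Taken literally, your accounting would give $\E(W_1^4)\to 12\|\Sigma\|_F^4/v^2$ rather than $3$. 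The right split is $\E(Q^4)=3(\var Q)^2+\kappa_4(Q)$, where $\kappa_4(Q)$ is the fourth cumulant of $Q$. Diagrams that break into two components (two factors each) give exactly $3v^2$, including the $\beta$-terms. Only the connected diagrams, which make up $\kappa_4(Q)$, are $O(p)$ under $\|\Sigma\|_{\textup{op}}\lesssim1$. These are graph sums with four $\Sigma$-edges on a connected multigraph with all vertex degrees at least $2$, weighted by cumulants of $Z_{11}$ up to order $8$.
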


\noindent \textit{Proof.}
Define the kernel function
\begin{equation}\label{eqn:hdef}
h(X_1, X_2) = \big(\ts\frac{1}{2} - \frac{2(n-2)}{n^2}\big) \big(\|X_1\|_2^2 - \|X_2\|_2^2\big)^2 - \frac{2(n-2)}{n} (X_1^{\top} X_2)^2.
\end{equation}
so that $U_n$ can be represented in the following form as a U statistic,
\begin{align}
\label{equa: U stat}
U_n = \frac{1}{\binom{n/2}{2}} \sum_{1 \leq i<j\leq n/2} h(X_i, X_j).
\end{align}
Next, define the function
\begin{equation}\label{eqn:h1def}
h_1(X_1) = \E(h(X_1, X_2) | X_1) - \E(h(X_1, X_2)),
\end{equation}
and let $L_n$ denote the H\'ajek projection of $U_n-\E(h(X_1, X_2))$, which satisfies
\begin{align}
L_n = \frac{2}{n/2} \sum_{i=1}^{n/2} h_1(X_i).
\label{eqn: hatU def}
\end{align}
Lemma \ref{lem: varhatUU} shows that $\var(L_n)/\var(U_n)\to 1$ as $n\to\infty$, and consequently, \cite[][Theorem 11.2]{van2000asymptotic} ensures
\begin{align}
\frac{U_n-\E(U_n)}{(\var(U_n))^{1/2}} = \frac{L_n}{(\var(L_n))^{1/2}} + o_{\P}(1).
\label{eqn: U normal}
\end{align}
The proof will be complete if we can establish two more items, which are
\begin{align}
\frac{L_n}{(\var(L_n))^{1/2}} \xrightarrow{\mathcal{L}}N(0,1)
\label{eqn: hatU normal}
\end{align}
and
\begin{align}
\frac{U_n-\E(U_n)}{\sqrt{\var(U_n)}} = \frac{\sqrt{n}(U_n-(\var(\|X_1\|_2^2)-2\|\Sigma\|_F^2))}{2\var(\|X_1\|_2^2)} +  o_{\P}(1). 
\label{eqn: std U decom}
\end{align}
The limit \eqref{eqn: hatU normal} is established in Lemma \ref{lem: hatU normal}, and so it remains to prove \eqref{eqn: std U decom}. Using Lemma \ref{lemma:covZMZ}, direct calculations give 
\begin{equation}\label{eqn: EhX1X2}
\begin{aligned}
\E(U_n)&=\E(h(X_1, X_2))\\
& = \Big(\ts\frac{1}{2} - \ts\frac{2(n-2)}{n^2}\Big) \cdot 2\cdot \var(\|X_1\|_2^2) - \frac{2(n-2)}{n} \E\big((X_1^{\top} X_2)^2\big) \\
& =  (\var(\|X_1\|_2^2) - 2\|\Sigma\|_F^2) + \mathcal{O}(n^{-1}) (\var(\|X_1\|_2^2) + \|\Sigma\|_F^2).
\end{aligned}
\end{equation}
Furthermore, Lemma \ref{lem: varhatUU} implies that
\begin{align}
\frac{\var(U_n)}{(4/n)\var^2(\|X_1\|_2^2)} \rightarrow 1,
\label{eqn: varU order}
\end{align}
as $n \rightarrow \infty$. Combining \eqref{eqn: EhX1X2} and \eqref{eqn: varU order}, as well as the fact that $ \|\Sigma\|_F^2\lesssim \var(\|X_1\|_2^2)$ under Assumption~\ref{A:model}, it follows that~\eqref{eqn: std U decom} holds.

\qed

\begin{lemma}
Let the statistics $U_n$ and $L_n$ be as defined in~\eqref{eqn:Udef} and~\eqref{eqn: hatU def} respectively. If Assumption \ref{A:model} holds, then as $n \rightarrow \infty$,
\begin{align}\label{eqn:varUvarhatU}
    \frac{\var(L_n)}{\var(U_n)} \rightarrow 1,
\end{align}
and
\begin{align}\label{eqn:varhatUformula}
\frac{\var(L_n)}{(4/n)\var^2(\|X_1\|_2^2)} \rightarrow 1.
\end{align}
\label{lem: varhatUU}
\end{lemma}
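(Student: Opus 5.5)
We will use the Hoeffding decomposition of the U statistic $U_n$ in~\eqref{equa: U stat} with $m=n/2$ observations. It gives
\[
\var(U_n)=\frac{4(m-2)}{m(m-1)}\,\zeta_1+\frac{2}{m(m-1)}\,\zeta_2,
\qquad \zeta_1=\var(h_1(X_1)),\quad \zeta_2=\var(h(X_1,X_2)),
\]
and $\var(L_n)=\frac{4}{m}\zeta_1$. Both claims then reduce to two estimates. The first is the sharp asymptotic
\[
\zeta_1=\tfrac12\var^2(\|X_1\|_2^2)\,(1+o(1)),
\]
which gives $\var(L_n)=\frac{8}{n}\zeta_1\sim\frac4n\var^2(\|X_1\|_2^2)$, i.e.~\eqref{eqn:varhatUformula}. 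The second is the crude bound $\zeta_2=o(n\,\zeta_1)$, which makes the second Hoeffding term negligible and yields~\eqref{eqn:varUvarhatU}, since $\frac{4(m-2)}{m(m-1)}\sim\frac4m$.

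\textbf{Computing $h_1$.} Write $Q_i=\|X_i\|_2^2$, $\mu=\tr\Sigma$, $v=\var(Q_1)$, and $c_n=\frac12-\frac{2(n-2)}{n^2}$. Then
\[
\E\big((Q_1-Q_2)^2\mid X_1\big)=(Q_1-\mu)^2+v,
\qquad
\E\big((X_1^\top X_2)^2\mid X_1\big)=X_1^\top\Sigma X_1,
\]
so
\[
h_1(X_1)=c_n\big[(Q_1-\mu)^2-v\big]-\tfrac{2(n-2)}{n}\big(X_1^\top\Sigma X_1-\|\Sigma\|_F^2\big).
\]

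\textbf{Orders of magnitude.} Under Assumption~\ref{A:model}, Lemma~\ref{lemma:covZMZ} gives $v\asymp p$, since $2\|\Sigma\|_F^2\asymp p$ and the kurtosis term is bounded by a multiple of $p$. Write $W=(Q_1-\mu)/\sqrt v$.
\begin{itemize}
\item The variable $\var(X_1^\top\Sigma X_1)$ is $\mathcal O(\|\Sigma^2\|_F^2)=\mathcal O(p)$, so the second term of $h_1$ contributes $\mathcal O(p)=o(p^2)$ to $\zeta_1$.
\item The main term satisfies $\var\big((Q_1-\mu)^2\big)=v^2\var(W^2)=v^2(\E W^4-1)$.
\item The key claim is $\E W^4\to3$, which is a quadratic-form CLT for $Q_1=Z_1^\top\Sigma Z_1$. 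One can verify this directly via a fourth-cumulant computation: $\kappa_4(Z^\top AZ)=\mathcal O(\tr(A^4)+\sum_{j}(A^2)_{jj}^2+\dots)=\mathcal O(p)$, while $v^2\asymp p^2$. Expanding $\E(Z^\top AZ-\tr A)^4$ with $A=\Sigma$ (bounded operator norm) and using the eighth moment condition, every non-Gaussian pairing contributes $\mathcal O(p)$.
\end{itemize}
Together with $c_n\to\frac12$, these give $\zeta_1=\frac14\cdot 2v^2(1+o(1))=\frac12 v^2(1+o(1))$. Cross terms are controlled by Cauchy--Schwarz.

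\textbf{Bounding $\zeta_2$.} We have $\zeta_2\lesssim\E(Q_1-Q_2)^4+\E(X_1^\top X_2)^4$.
\begin{itemize}
\item The first term is $\mathcal O(v^2)=\mathcal O(p^2)$ by the fourth-moment bound above.
\item For the second, conditioning on $X_2$ gives $\E\big((X_1^\top X_2)^4\mid X_2\big)\lesssim(X_2^\top\Sigma X_2)^2$ by the quadratic-form moment bound for linear forms of $Z_1$. Its expectation is therefore $\mathcal O(p^2)$.
\end{itemize}
Hence $\zeta_2=\mathcal O(p^2)=\mathcal O(\zeta_1)=o(n\zeta_1)$.

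\textbf{Main obstacle.} The only delicate step is the sharp fourth-moment identity $\E W^4=3+o(1)$. It requires a careful expansion of the fourth central moment of a quadratic form in i.i.d.\ non-Gaussian variables, tracking the index coincidences. I would organize this via the standard formula for $\E(Z^\top AZ-\tr A)^4$, which consists of $3(2\|A\|_F^2+\kappa\sum_j A_{jj}^2)^2$ plus terms bounded by $C\,\E(Z_{11}^8)\,\big(\tr A^4+\sum_j A_{jj}^4+\dots\big)$, all of order $p$. Notably, this argument does not need the effective-rank part of Assumption~\ref{A:model}(c).
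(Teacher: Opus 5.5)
Your outline follows the paper's proof step for step. Both use the same Hoeffding variance formulas and the same conditional expectation $\E(h(X_1,X_2)\mid X_1)$. Both dismiss the $X_1^{\top}\Sigma X_1$ term as $\mathcal{O}(p)$, handle the cross term by Cauchy--Schwarz, and bound $\var(h(X_1,X_2))$ crudely by $\mathcal{O}(p^2)$. For that last bound, your exact fourth moment of the linear form $a^{\top}Z_1$ with $a=\Sigma^{1/2}X_2$ replaces the Rosenthal argument of Lemma~\ref{lemma: EZ1MZ2^4}.

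The genuine difference is how you get $\E(W^4)\to 3$.
\begin{itemize}
\item The paper (Lemma~\ref{lem:varnormsq}) argues softly. It combines the quadratic-form CLT of Lemma~\ref{lem: quadra normal} with uniform integrability from a $(4+\delta/2)$-moment bound via Lemma~\ref{lem:Bai,quadra}. This is where $\E(|Z_{11}|^{8+\delta})<\infty$ enters.
\item Your route is the expansion $\E((Q_1-\mu)^4)=3v^2+\kappa_4(Q_1)$ with $\kappa_4(Q_1)=\mathcal{O}(p)$. It needs only eight moments and gives a rate.
\end{itemize}
The price is bookkeeping that you only sketch, and it must use one structural fact. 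In every surviving index configuration each index appears at least twice. So the four factors form a connected multigraph with minimum degree two, and every such sum is $\mathcal{O}(p)$ when $\|\Sigma\|_{\textup{op}}\lesssim 1$ and $\E(Z_{11}^8)\lesssim 1$. Without the degree constraint, connected index sums with bounded operator norm can be of order $p^{3/2}$, for example $\sum_i(\sum_j\Sigma_{ij})^3$. So ``every non-Gaussian pairing contributes $\mathcal{O}(p)$'' is true here but not automatic.

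One justification is insufficient. You derive $v\asymp p$ from $2\|\Sigma\|_F^2\asymp p$ and the kurtosis term being $\mathcal{O}(p)$, but this only gives $v\lesssim p$. Since $\E(Z_{11}^4)-3$ can be negative, the two terms can cancel. For instance, Rademacher entries with $\Sigma=I$ make $\|X_1\|_2^2=p$ deterministic, so $v=0$.

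The lower bound carries real weight. It is what gives $\zeta_1\asymp p^2$, and hence what makes $\var(X_1^{\top}\Sigma X_1)$ and $\kappa_4(Q_1)$ negligible and $\zeta_2=\mathcal{O}(\zeta_1)$.

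It follows from the hypothesis $\E(Z_{11}^4)>1$ in Assumption~\ref{A:model}(a):
\[
v=2\sum_{j\neq k}\Sigma_{jk}^2+(\E(Z_{11}^4)-1)\sum_{j=1}^p\Sigma_{jj}^2\geq(\E(Z_{11}^4)-1)\sum_{j=1}^p\Sigma_{jj}^2\gtrsim p .
\]
The last step uses that the law of $Z_{11}$ does not depend on $n$ and that $\Sigma_{jj}\geq\lambda_{\min}(\Sigma)\gtrsim 1$. With this line added, and the cumulant bookkeeping written out, your argument is complete.
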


\noindent \textit{Proof.}
It is a classical fact~\citep[p.163]{van2000asymptotic} that the variances of $U_n$ and $L_n$ can be expressed as
\begin{align*}
& \var(U_n) = \ts\frac{2(n/2-2)}{\binom{n/2}{2}}\var(h_1(X_1)) + \ts\frac{1}{\binom{n/2}{2}} \var(h(X_1,X_2))\\
& \var(L_n) = \ts\frac{4}{n/2} \var(h_1(X_1)),
\end{align*}
where we recall that $h$ and $h_1$ are defined in~\eqref{eqn:hdef} and~\eqref{eqn:h1def} respectively.
Thus, the limit~\eqref{eqn:varUvarhatU} will follow if we can show
\begin{align}
\var(h(X_1,X_2)) = o(n\, \var(h_1(X_1))).
\label{eqn: psi2_n1}
\end{align}
We will establish this by showing $\var(h(X_1,X_2)) \lesssim \|\Sigma\|_F^4$ and $\var(h_1(X_1)) \asymp \|\Sigma\|_F^4$  below. For the upper bound, we have
\begin{align*}
\var(h(X_1,X_2)) 
& \ \lesssim \  \var\big((\|X_1\|_2^2 - \|X_2\|_2^2)^2\big) + \var\big((X_1^{\top} X_2)^2\big)\\
& \ \lesssim \  \E\big(\big(\|X_1\|_2^2 - \tr(\Sigma) \big)^4\big) +  \E\big((X_1^{\top} X_2)^4\big)\\ 
& \ \lesssim \|\Sigma\|_F^4,
\end{align*}
where Lemmas~\ref{lemma: EZ1MZ2^4} and~\ref{lem:Bai,quadra} have been used in the last step.

Now, we calculate the order of $\var(h_1(X_1))$. Recall that $h_1(X_1)=\E(h(X_1,X_2)|X_1)-\E(h(X_1,X_2))$, and note that the definition of $h$ gives
\begin{equation}\label{eqn: EhX1X2|X1}
\begin{aligned}
\E(h(X_1, X_2) | X_1) 
& = \big(\ts\frac{1}{2} - \frac{2(n-2)}{n^2}\big) \Big( \|X_1\|_2^4 -2 \tr(\Sigma) \|X_1\|_2^2  + \E(\|X_2\|_2^4) \Big) - \frac{2(n-2)}{n} X_1^{\top}\Sigma X_1.
\end{aligned}
\end{equation}
Due to the fact that variance is shift invariant, we may ignore $\E(h(X_1,X_2))$ and also replace $\E(\|X_1\|_2^4)$ with $\tr(\Sigma)^2$ to write the second factor in parentheses as $(\|X_1\|_2^2-\tr(\Sigma))^2$ when calculating $\var(h_1(X_1))$.  It follows that
\begin{equation}\label{eqn: psi1 decom}
\begin{split}
\var(h_1(X_1)) 
& =\var\big(\E\big(h(X_1, X_2)|X_1\big)\big) \\[0.1cm]
& = (\ts\frac{1}{4} + \mathcal{O}(\frac{1}{n}) ) \var\big( \big(\|X_1\|_2^2 -\tr(\Sigma)\big)^2\big) + (4+\mathcal{O}(\frac{1}{n})) \var(X_1^{\top} \Sigma X_1) \\[0.1cm]
& ~~~ - (2+\mathcal{O}(\ts\frac{1}{n})) \cov\big(\big( \|X_1\|_2^2 - \tr(\Sigma)\big)^2, X_1^{\top} \Sigma X_1\big).
\end{split}
\end{equation}
Lemma~\ref{lem:varnormsq} shows that 
\begin{equation}\label{eqn:varFrob}
    \begin{split}
        \var\Big(\big[\|X_1\|_2^2-\tr(\Sigma)\big]^2\Big) &\, = \, (2+o(1))\var^2(\|X_1\|_2^2)\\
    &\,  \asymp  \, \|\Sigma\|_F^4,
    \end{split}
\end{equation}
    where the second step follows from Lemma~\ref{lemma:covZMZ}  and the condition $\E(Z_{11}^4)>\E(Z_{11}^2)=1$ in Assumption~\ref{A:model}.
Meanwhile, Lemma~\ref{lemma:covZMZ} and Assumption~\ref{A:model} imply
\begin{equation}
    \var(X_1\ttop \Sigma X_1)\lesssim \tr(\Sigma^4)=o(\|\Sigma\|_F^4),
\end{equation}
and so the term $\var(X_1\ttop \Sigma X_1)$ is negligible in~\eqref{eqn: psi1 decom}. Likewise, the Cauchy-Schwarz inequality implies that the covariance term in~\eqref{eqn: psi1 decom} is also negligible, and hence
\begin{equation}
    \label{eqn:varh1order}
    \var(h_1(X_1))=(\ts\frac{1}{2}+o(1))\var^2(\|X_1\|_2^2)\asymp \|\Sigma\|_F^4.
\end{equation}
This verifies~\eqref{eqn: psi2_n1} and completes the proof of the first limit~\eqref{eqn:varUvarhatU} in the statement of the result. Finally, the second limit~\eqref{eqn:varhatUformula} follows from 
$$\var(L_n)= \frac{8}{n}\var(h_1(X_1))=\frac{4}{n}(1+o(1))\var^2(\|X_1\|_2^2),$$ 
which completes the proof.

\qed

\begin{lemma}\label{lem:varnormsq}
    If Assumption \ref{A:model}  holds, then as $n\to\infty$,
    \begin{equation*}
        \var\Big(\big[\|X_1\|_2^2-\tr(\Sigma)\big]^2\Big)=(2+o(1))\var^2(\|X_1\|_2^2).
    \end{equation*}
\end{lemma}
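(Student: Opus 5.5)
Set $W=\|X_1\|_2^2-\tr(\Sigma)$ and $v_n=\var(\|X_1\|_2^2)=\E(W^2)$. Then
\[
\var(W^2)=\E(W^4)-v_n^2 .
\]
So the lemma is equivalent to $\E(W^4)=(3+o(1))v_n^2$. In words, the standardized quadratic form $W/\sqrt{v_n}$ should have a Gaussian fourth moment in the limit. The plan is to prove this by a direct moment expansion rather than by invoking a CLT, since uniform integrability would then need to be checked separately.

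\textbf{Step 1 (setup).} Write $W=Z_1\ttop\Sigma Z_1-\tr(\Sigma)$ and decompose $W=W_d+W_o$. The diagonal part is
\[
W_d=\sum_j\Sigma_{jj}(Z_{1j}^2-1),
\]
and the off-diagonal part is
\[
W_o=\sum_{j\neq k}\Sigma_{jk}Z_{1j}Z_{1k}.
\]
By Lemma~\ref{lemma:covZMZ},
\[
v_n=2\|\Sigma\|_F^2+(\E Z_{11}^4-3)\sum_j\Sigma_{jj}^2 .
\]
Under Assumption~\ref{A:model} we also have $v_n\asymp\|\Sigma\|_F^2\asymp p$. The lower bound follows from $\E Z_{11}^4>1$, since $v_n\ge 2\sum_{j\ne k}\Sigma_{jk}^2+(\E Z_{11}^4-1)\sum_j\Sigma_{jj}^2$.

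\textbf{Step 2 (expand the fourth moment).} Expand $\E(W^4)$ as a sum over index configurations of products of centered terms $\Sigma_{jk}(Z_{1j}Z_{1k}-\delta_{jk})$. Every term has mean zero, and each index must appear at least twice for the expectation to be nonzero. The dominant contributions are the pairings: the four factors split into two pairs with disjoint index sets. These give exactly
\[
3\Big(\sum\E[\cdot]^2\Big)^2=3v_n^2+\mathcal{O}(\text{lower order}),
\]
after subtracting the overcounted cases where the two pairs share indices. Every remaining configuration (connected graphs on the four factors) must be bounded by $o(p^2)$. Typical such terms include:
\begin{itemize}
\item $\sum_j\Sigma_{jj}^4\E(Z^2-1)^4\lesssim p$, which uses only $\E Z_{11}^8\lesssim 1$;
\item cyclic off-diagonal terms such as $\tr(\Sigma_o^4)\le\|\Sigma\|_{\op}^2\|\Sigma\|_F^2\lesssim p$;
\item mixed terms such as $\sum_{j,k}\Sigma_{jj}\Sigma_{kk}\Sigma_{jk}^2$ and $\sum_{j,k}\Sigma_{jk}^4$, both $\lesssim p$;
\item terms with three shared indices, e.g.\ $\sum_{j,k,l}\Sigma_{jj}\Sigma_{jk}\Sigma_{kl}\Sigma_{lj}$, which is bounded via $\|\Sigma\|_{\op}\lesssim 1$ by $\mathcal{O}(p)$.
\end{itemize}

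\textbf{Step 3 (shortcut).} An equivalent and cleaner route is to use the known cumulant formula for quadratic forms with i.i.d.\ entries, e.g.\ in the style of Lemma~\ref{lem:Bai,quadra} or Bai--Silverstein type moment bounds. It gives
\[
\E(W^4)-3(\E W^2)^2=\kappa_4(W)\lesssim \tr(\Sigma^4)+\sum_j\Sigma_{jj}^4 \lesssim p ,
\]
with constants depending on moments of $Z_{11}$ up to order eight. This is finite by Assumption~\ref{A:model}(a). Since $p=o(p^2)=o(v_n^2)$, we conclude
\[
\var(W^2)=\E(W^4)-v_n^2=(2+o(1))v_n^2 .
\]

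\textbf{Main obstacle.} The main difficulty is the combinatorial bookkeeping in Step 2. We must show that every non-pairing configuration is $\mathcal{O}(p)$, using only $\|\Sigma\|_{\op}\lesssim1$ and eighth moments. Here the worry is terms in which the diagonal and off-diagonal parts interact, e.g.\ $\E(W_d^2W_o^2)$. I would handle these by conditioning-free direct index counting. Each connected configuration has at most one free summation index beyond a closed walk in the graph of $\Sigma$. Every walk sum of that kind is controlled by $\|\Sigma\|_{\op}^{4}\,p$ via $\sum_j (\Sigma^m)_{jj}\le p\|\Sigma\|_{\op}^m$ and $|\Sigma_{jk}|\le\|\Sigma\|_{\op}$.
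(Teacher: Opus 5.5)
Your argument is correct, but it takes a different route from the paper's. The paper never expands $\E(W^4)$. It applies the CLT for quadratic forms (Lemma~\ref{lem: quadra normal}, valid because $\|\Sigma\|_F/\|\Sigma\|_{\op}\to\infty$) to get $W/\sqrt{v_n}\xrightarrow{\mathcal{L}}N(0,1)$. It then bounds $\E|W/\sqrt{v_n}|^{4+\delta/2}$ via Lemmas~\ref{lemma:covZMZ} and~\ref{lem:Bai,quadra}, and concludes $\E(W^4)/v_n^2\to 3$ by uniform integrability. You instead prove directly that $\kappa_4(W)=\E(W^4)-3v_n^2=\mathcal{O}(p)=o(v_n^2)$. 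Your Step~2 pairings-minus-overlaps computation is exactly the identity $\E(W^4)=3v_n^2+\kappa_4(W)$, which holds since $\E W=0$. The paper's proof is short because it outsources the work to two existing results. However, for this lemma it uses the extra $\delta$ in the $8+\delta$ moments only to obtain uniform integrability, and it gives no rate. Yours is self-contained, needs only $\E(Z_{11}^8)\lesssim 1$, and yields $\E(W^4)/v_n^2=3+\mathcal{O}(1/p)$, at the cost of combinatorial bookkeeping.

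Two points in that bookkeeping need repair.

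\textbf{Step~3 citation.} Step~3 cannot rest on Lemma~\ref{lem:Bai,quadra}. With $q=4$ that lemma only gives $\E(W^4)\lesssim\|\Sigma\|_F^4+\tr(\Sigma^4)$. This is an upper bound of the right order with an unspecified constant, so it can never isolate the constant $3$. You need either an explicit, properly referenced fourth-cumulant formula for quadratic forms in non-Gaussian i.i.d.\ variables, or Step~2 actually carried out.

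\textbf{The ``closed walk'' description.} This is inaccurate. Terms such as $\sum_{u,v,w}\Sigma_{uv}\Sigma_{uw}\Sigma_{vw}^2$ (vertex degrees $2,3,3$) or $\sum_{u,v}\Sigma_{uu}^2\Sigma_{uv}\Sigma_{vv}$ (a bridge joining two looped vertices) are not closed walks. The right organizing fact comes from expanding $\kappa_4(W)$ by the Leonov--Shiryaev formula. Every surviving term is then a sum over a connected multigraph whose four edges are the factors $\Sigma_{ab}$, and every vertex has degree at least $2$ because $\E(Z_{11})=0$. There are finitely many such graphs, and each sum is $\mathcal{O}(p)$, using three bounds:
\begin{itemize}
\item $\max_{j,k}|(\Sigma^m)_{jk}|\le\|\Sigma\|_{\op}^m$;
\item $\|\Sigma\|_F^2\lesssim p$;
\item $|a\ttop\Sigma^m b|\le\|\Sigma\|_{\op}^m\|a\|_2\|b\|_2$ for vectors $a,b$ built from diagonal entries of $\Sigma$ or $\Sigma^2$.
\end{itemize}
The degree condition is what matters. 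A pendant pattern such as $\sum_v((\Sigma\mathbf{1})_v)^4$ can be of order $p^2$ even when $\|\Sigma\|_{\op}\lesssim 1$, but it cannot arise.
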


\noindent \textit{Proof.} Lemma \ref{lem: quadra normal} shows that if $\|\Sigma\|_F/\|\Sigma\|_{\textup{op}}\to\infty$ as $n\to\infty$, then
\begin{align}\label{eqn:quadclt}
\frac{\|X_1\|_2^2 - \tr(\Sigma)}{\sqrt{\var(\|X_1\|_2^2)}}\xrightarrow{\mathcal{L}} N(0,1).
\end{align}
The ratio $\|\Sigma\|_F/\|\Sigma\|_{\textup{op}}$ diverges under Assumption~\ref{A:model}, because we have $\|\Sigma\|_{\textup{op}}\asymp 1$ and $\|\Sigma\|_F^2 \gtrsim p$,
so the limit~\eqref{eqn:quadclt} holds.

Next, Lemmas~\ref{lemma:covZMZ} and~\ref{lem:Bai,quadra} can be used to obtain the moment bound
\begin{equation}\label{eqn:unifintbound}
    \E\,\bigg(\bigg| \frac{\|X_1\|_2^2 - \tr(\Sigma)}{\sqrt{\var(\|X_1\|_2^2)}}\bigg|^{4+\frac{\delta}{2}} \bigg)\lesssim 1,
\end{equation}
 with $\delta$ being as in Assumption~\ref{A:model}. Combining~\eqref{eqn:quadclt} and~\eqref{eqn:unifintbound}, uniform integrability implies that as $n\to\infty$, we have
\begin{equation*}
       \E\,\bigg(\bigg| \frac{\|X_1\|_2^2 - \tr(\Sigma)}{\sqrt{\var(\|X_1\|_2^2)}}\bigg|^4\bigg)
       \to \frac{1}{\sqrt{2\pi}}\displaystyle\int_{-\infty}^{\infty} z^4 e^{-z^2/2} dz= 3.
\end{equation*}
Thus,
\begin{align*}
    \var\big(\big(\|X_1\|_2^2-\tr(\Sigma)\big)^2\big) 
    & = \E\big((\|X_1\|_2^2-\tr(\Sigma))^4\big)-\var^2(\|X_1\|_2^2)\\
    & =(3+o(1))\var^2(\|X_1\|_2^2)-\var^2(\|X_1\|_2^2)
\end{align*}
which completes the proof.

\qed

\begin{lemma}
If Assumption \ref{A:model}  holds, then as $n\to\infty$,
$$
\frac{L_n}{(\var(L_n))^{1/2}} \xrightarrow{\mathcal{L}} N(0,1).
$$
\label{lem: hatU normal}
\end{lemma}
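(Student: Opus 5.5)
Since $L_n=\frac{2}{n/2}\sum_{i=1}^{n/2}h_1(X_i)$ is a normalized sum of i.i.d.\ mean-zero variables, but with a distribution that changes with $n$ (through $p$ and $\Sigma$), I would apply the Lyapunov central limit theorem for triangular arrays. The statement therefore reduces to showing that for some $\eta>0$,
\begin{equation*}
\frac{\E\big(|h_1(X_1)|^{2+\eta}\big)}{n^{\eta/2}\,\big(\var(h_1(X_1))\big)^{1+\eta/2}}\to 0 .
\end{equation*}
The denominator is handled by \eqref{eqn:varh1order}: $\var(h_1(X_1))\asymp\|\Sigma\|_F^4\asymp p^2$. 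It then suffices to show $\E(|h_1(X_1)|^{2+\eta})\lesssim \|\Sigma\|_F^{2(2+\eta)}$. With that bound the Lyapunov ratio is $\mathcal{O}(n^{-\eta/2})\to 0$.

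To get the moment bound, I would use the explicit form \eqref{eqn: EhX1X2|X1}. Up to an additive constant, $h_1(X_1)$ equals
\[
\big(\tfrac12+\mathcal{O}(\tfrac1n)\big)\big(\|X_1\|_2^2-\tr(\Sigma)\big)^2-\big(2+\mathcal{O}(\tfrac1n)\big)\big(X_1^\top\Sigma X_1-\tr(\Sigma^2)\big).
\]
The additive constant is $\mathcal{O}(\var(\|X_1\|_2^2))=\mathcal{O}(\|\Sigma\|_F^2)$. By Minkowski's inequality it is enough to bound the $L^{2+\eta}$ norm of each piece by $\mathcal{O}(\|\Sigma\|_F^2)$.
\begin{itemize}
\item For the first piece, take $\eta=\delta/4$. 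Then $\E|\|X_1\|_2^2-\tr\Sigma|^{4+\delta/2}\lesssim \var(\|X_1\|_2^2)^{2+\delta/4}\asymp\|\Sigma\|_F^{4+\delta/2}$, which is exactly the bound \eqref{eqn:unifintbound}, obtained from Lemmas~\ref{lemma:covZMZ} and~\ref{lem:Bai,quadra} under the $(8+\delta)$-moment assumption. This gives $\|(\|X_1\|_2^2-\tr\Sigma)^2\|_{L^{2+\eta}}\lesssim\|\Sigma\|_F^2$.
\item For the second piece, Lemma~\ref{lem:Bai,quadra} applied with matrix $\Sigma^{1/2}\Sigma\Sigma^{1/2}=\Sigma^2$ gives an $L^{2+\eta}$ norm of order $\|\Sigma^2\|_F+(\text{lower-order trace terms})\lesssim\sqrt p$. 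This is much smaller than $\|\Sigma\|_F^2\asymp p$.
\end{itemize}

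The main obstacle is that the Lyapunov moment must be controlled with only $8+\delta$ moments on $Z_{11}$. The squared quadratic form is a degree-four polynomial in $Z$, so its $(2+\eta)$-th moment requires $(8+4\eta)$-th moments of $Z_{11}$. For this reason $\eta$ must be tied to $\delta$ (for instance $\eta=\delta/4$), and the form of Lemma~\ref{lem:Bai,quadra} must deliver the sharp order $\|\Sigma\|_F^{q}$ rather than a cruder bound with extra powers of $p$.

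I would verify this in the Bai–Silverstein form. For $q\ge 2$,
\[
\E|Z^\top AZ-\tr A|^q\lesssim (\nu_4\tr AA^\top)^{q/2}+\nu_{2q}\tr(AA^\top)^{q/2}.
\]
With $A=\Sigma$, the second term is at most $p\|\Sigma\|_{\op}^q\lesssim p\ll p^{q/2}$, so the first term dominates and the bound is $\|\Sigma\|_F^q$ as needed. Once this is in place, Lyapunov's condition holds and the stated convergence follows.
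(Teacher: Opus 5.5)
Your proposal is correct and follows the paper's proof. Both use Lyapunov's condition with exponent $2+\delta/4$, the order $\var(h_1(X_1))\asymp\|\Sigma\|_F^4$ from \eqref{eqn:varh1order}, and the moment bound $\E(|h_1(X_1)|^{2+\delta/4})\lesssim\|\Sigma\|_F^{4+\delta/2}$ obtained from Lemma~\ref{lem:Bai,quadra} under the $(8+\delta)$-moment assumption. The only small difference is that you bound $h_1$ through its explicit form \eqref{eqn: EhX1X2|X1}, so the cross term becomes the quadratic form $X_1^{\top}\Sigma X_1$; the paper instead passes to $\E(|h(X_1,X_2)|^{2+\delta/4})$ by Jensen's inequality and controls $X_1^{\top}X_2$ with Lemma~\ref{lemma: EZ1MZ2^4}.
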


\noindent \textit{Proof.} 
Recall that $L_n = \frac{2}{n/2} \sum_{i=1}^{n/2} h_1(X_i)$, and let $\delta>0$ be as in Assumption~\ref{A:model}. It suffices to verify that the following Lyapunov-type condition holds as $n\to\infty$,
\begin{align}
\frac{\E(|h_1(X_1)|^{2+\delta/4})}{n^{\delta/8} (\var(h_1(X_1)))^{1+\delta/8}} \rightarrow 0.
\label{cond: lyapunaov}
\end{align}
Note that this condition implies the Lindeberg central limit theorem in the setting of triangular arrays \cite[Proposition 2.27]{van2000asymptotic}, which allows for $p$ to diverge with $n$ asymptotically. From the definitions of $h_1(X_1)$ and $h(X_1, X_2)$ in~\eqref{eqn:h1def} and~\eqref{eqn:hdef}, we have
\begin{equation}
\begin{split}
\E\big(|h_1(X_1)|^{2+\delta/4}\big) 
& \ \lesssim \ \E\big(|h(X_1,X_2)|^{2+\delta/4}\big)\\
& \  \lesssim  \ \E\big(\big|\|X_1\|_2^2 - \tr(\Sigma)\big|^{4+\delta/2}\big) \ + \  \E\big(|X_1^{\top} X_2|^{4+\delta/2}\big)\\
& \ \lesssim \  \|\Sigma\|_F^{4+\delta/2},
\end{split}
\end{equation}
where Lemmas~\ref{lemma: EZ1MZ2^4} and~\ref{lem:Bai,quadra} have been used in the last step.
Meanwhile, we know from~\eqref{eqn:varh1order} that $(\var(h_1(X_1)))^{1+\delta/8}  \asymp \|\Sigma\|_F^{4+\delta/2}$, which implies the desired condition~\eqref{cond: lyapunaov}.

\qed

\begin{lemma}
\label{lemma: EZ1MZ2^4}
 If Assumption \ref{A:model}  holds with the value of $\delta>0$ stated there, and $2\leq q\leq 4+\delta/2$, then 
\begin{align*}
\E(|X_1^{\top} X_2|^q) \lesssim \|\Sigma\|_F^q.
\end{align*}
\end{lemma}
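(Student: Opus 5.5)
Conditioning on $X_2$ and applying a moment inequality for linear forms in the i.i.d.\ entries of $Z_1$ gives the bound directly. Write $X_1^\top X_2 = Z_1^\top \Sigma^{1/2} X_2 = \sum_{l=1}^p Z_{1l}\, a_l$, where $a=\Sigma^{1/2}X_2$ depends only on $X_2$ and is independent of $Z_1$. Conditionally on $X_2$, this is a sum of independent, mean-zero terms. Rosenthal's inequality (or the Marcinkiewicz--Zygmund inequality) with $q\le 4+\delta/2\le 8+\delta$ then yields
\begin{equation*}
\E\big(|X_1^\top X_2|^q \,\big|\, X_2\big) \lesssim \|a\|_2^q\,\big(\E Z_{11}^2\big)^{q/2} + \E|Z_{11}|^q\,\sum_{l=1}^p |a_l|^q \lesssim \|a\|_2^q ,
\end{equation*}
using $\sum_l |a_l|^q\le \|a\|_2^q$ for $q\ge 2$ and $\E|Z_{11}|^q\lesssim 1$ from Assumption~\ref{A:model}(a). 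The implicit constant depends only on $q$ and the moment bound, so it is uniform in $n$.

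Taking expectations gives $\E(|X_1^\top X_2|^q)\lesssim \E(\|\Sigma^{1/2}X_2\|_2^q)=\E\big((X_2^\top \Sigma X_2)^{q/2}\big)=\E\big((Z_2^\top\Sigma^2 Z_2)^{q/2}\big)$. Split $Z_2^\top \Sigma^2 Z_2=\tr(\Sigma^2)+(Z_2^\top\Sigma^2Z_2-\tr(\Sigma^2))$. The first piece contributes $\tr(\Sigma^2)^{q/2}=\|\Sigma\|_F^q$.

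For the centered piece, apply Lemma~\ref{lem:Bai,quadra}, the moment bound for quadratic forms, with exponent $r=q/2\le 2+\delta/4$ and matrix $B=\Sigma^2$. It gives
\begin{equation*}
\E\big|Z_2^\top BZ_2-\tr B\big|^{r}\lesssim \big(\E Z_{11}^4\,\tr(B^2)\big)^{r/2}+\E|Z_{11}|^{2r}\tr(B^{r}) .
\end{equation*}
Since $\|\Sigma\|_{\op}\lesssim 1$, we have $\tr(\Sigma^4)\lesssim\|\Sigma\|_F^2$ and $\tr(\Sigma^{2r})\lesssim p\lesssim\|\Sigma\|_F^2$. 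The required moments $\E|Z_{11}|^{q}$ with $q\le 4+\delta/2$ are finite by assumption. The centered piece is therefore $\lesssim \|\Sigma\|_F^{r}+\|\Sigma\|_F^2\le\|\Sigma\|_F^{q}$ (as $r\le q$, $2\le q$, and $\|\Sigma\|_F\gtrsim 1$). Combining with $(x+y)^{r}\lesssim x^r+|y|^r$ gives $\E\|\Sigma^{1/2}X_2\|_2^q\lesssim\|\Sigma\|_F^q$, which proves the claim.

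The main point needing care is the second step: checking that Lemma~\ref{lem:Bai,quadra} applies for non-integer exponent $q/2\in[1,2+\delta/4]$ and that its moment requirements stay within the $(8+\delta)$-moment assumption. If $q/2<2$, one can instead bound the centered term via Lyapunov's inequality by its second moment, which is $\lesssim \tr(\Sigma^4)$ by Lemma~\ref{lemma:covZMZ}.
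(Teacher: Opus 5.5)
Your proposal is correct and is essentially the paper's argument: you condition on one of the two vectors (you use $X_2$, the paper uses $Z_1$), apply Rosenthal's inequality to reduce to $\E\big((Z^\top\Sigma^2 Z)^{q/2}\big)$, and then control the centered quadratic form with Lemma~\ref{lem:Bai,quadra} at exponent $q/2$. Your step $(x+y)^r\lesssim x^r+|y|^r$ plays the role of the paper's $L^{q/2}$ triangle inequality, and your worry about non-integer exponents is unnecessary, since Lemma~\ref{lem:Bai,quadra} is stated for any real exponent $\geq 1$ and here only needs $\E(Z_{11}^4)$ and $\E(|Z_{11}|^{q})$, both controlled by the $(8+\delta)$-moment assumption.
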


\noindent \textit{Proof.}
Note that $X_1^{\top} X_2 = Z_1^{\top} \Sigma Z_2$, and so conditionally on $Z_1$, the quantity $X_1^{\top} X_2$ may be regarded as a linear combination of centered independent random variables. Applying Rosenthal's inequality (Lemma \ref{lem:Rosenthal}) conditionally on $Z_1$, the following bound holds almost surely for some absolute constant $c>0$,
\begin{equation}\label{eqn:condZ1}
\begin{split}
  \E\big(|Z_1\ttop \Sigma Z_2|^q \big| Z_1\big)
& \leq (cq)^q \,\max \bigg\{ \E^{q/2}\big((Z_1\ttop \Sigma Z_2)^2|Z_1\big), \sum_{j=1}^p \E\big(|(Z_1\ttop \Sigma e_j) Z_{2j}|^q|Z_1\big) \bigg\} \\
& =  (cq)^q \,\max\bigg\{ \|\Sigma Z_1\|_2^q\, , \, \E(|Z_{21}|^q) \|\Sigma Z_1\|_q^q\bigg\}\\[0.2cm]
& \leq (c q)^q\, (1+\E(|Z_{21}|^q))(Z_1\ttop \Sigma^2 Z_1)^{q/2}.
\end{split} 
\end{equation}
Moreover, using Lemma~\ref{lem:Bai,quadra} and the triangle inequality for the $L^{q/2}$ norm, we have
\begin{equation}
\begin{split}
    \big\|Z_1^{\top} \Sigma^2 Z_1\big\|_{L^{q/2}} & \ \leq \big\|Z_1^{\top} \Sigma^2 Z_1 - \|\Sigma\|_F^2\big\|_{L^{q/2}}+ \|\Sigma\|_F^2\\
    &\lesssim \|\Sigma^2\|_F+ \big(\tr(\Sigma^q))^{\frac{1}{q/2}}+\|\Sigma\|_F^2\\
    &\lesssim \|\Sigma\|_F^2.
    \end{split}
\end{equation}
Taking an expectation over $Z_1$ in~\eqref{eqn:condZ1} and applying the previous bound completes the proof.
\qed

\subsection{Negligibility of remainders}
\label{sec:kappa2,asym}
Recall that the remainder variables $\epsilon_{n,1}$, $\epsilon_{n,2}$, and $\epsilon_{n,3}$ are defined in~\eqref{eqn:decompdefs}, and the parameter $\sigma_n$ is defined in \eqref{eqn:sigmandef}.

\begin{lemma}\label{lem:eps1}
If Assumption \ref{A:model} holds, then as $n \rightarrow \infty$
\begin{equation*}
    \frac{\epsilon_{n,1}}{\sigma_n}=o_{\P}(1).
\end{equation*}
\end{lemma}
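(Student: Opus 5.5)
We need $\epsilon_{n,1}/\sigma_n = o_{\P}(1)$. The plan is to bound the sizes of the three factors in $\epsilon_{n,1}$ separately and compare with $\sigma_n$.

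\emph{Size of $\sigma_n$.} By~\eqref{eqn:sigmandef}, $\sigma_n = 2\var(\|X_1\|_2^2)/(\sqrt n\sum_j\Sigma_{jj}^2)$. Under Assumption~\ref{A:model}, Lemma~\ref{lemma:covZMZ} together with $\E(Z_{11}^4)>1$ gives $\var(\|X_1\|_2^2)\asymp\|\Sigma\|_F^2\asymp p$. This is the same fact used in~\eqref{eqn:varFrob}. Also $\sum_j\Sigma_{jj}^2\asymp p$, because $\Sigma_{jj}\in[\lambda_{\min}(\Sigma),\lambda_{\max}(\Sigma)]$. Hence $\sigma_n\asymp n^{-1/2}$.

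\emph{Size of $\epsilon_{n,1}$.} The numerator has a deterministic factor, $|\var(\|X_1\|_2^2)-2\|\Sigma\|_F^2|$. By Lemma~\ref{lemma:covZMZ} with $A=I$, this equals $|\E(Z_{11}^4)-3|\sum_j(\Sigma_{jj})^2$. (One only needs the crude bound $\lesssim p$, since $\E(Z_{11}^4)\lesssim 1$.) The random factor $\sum_j(\Sigma_{jj}^2-\check\Sigma_{jj}^2)$ is $\mathcal{O}_{\P}(1)$ by Lemma~\ref{lem:sumhatsjj2} applied to $\check\Sigma$. For the denominator, $\sum_j\Sigma_{jj}^2\asymp p$. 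Lemma~\ref{lem:sumhatsjj2} also gives $\sum_j\check\Sigma_{jj}^2=(1+o_{\P}(1))\sum_j\Sigma_{jj}^2$, so the denominator is $\asymp p^2$ with probability tending to one. Combining these,
\[
|\epsilon_{n,1}|\;\lesssim\;\frac{p\cdot\mathcal{O}_{\P}(1)}{p^2}=\mathcal{O}_{\P}(p^{-1}).
\]

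\emph{Conclusion.} We get $\epsilon_{n,1}/\sigma_n=\mathcal{O}_{\P}(\sqrt n/p)=\mathcal{O}_{\P}(n^{-1/2})=o_{\P}(1)$, since $p/n\to\gamma>0$.

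I expect no genuine obstacle. The only point needing care is the lower bound $\var(\|X_1\|_2^2)\gtrsim p$, which keeps $\sigma_n$ from being too small. It follows from the formula $2\|\Sigma\|_F^2+(\E(Z_{11}^4)-3)\sum_j\Sigma_{jj}^2$. Writing $\|\Sigma\|_F^2\geq\sum_j\Sigma_{jj}^2$, this quantity is at least $(\E(Z_{11}^4)-1)\sum_j\Sigma_{jj}^2$. That lower bound is $\gtrsim p$ provided $\E(Z_{11}^4)-1$ is bounded away from zero, which is the interpretation of the condition $1<\E(Z_{11}^4)$ already used in~\eqref{eqn:varFrob}.
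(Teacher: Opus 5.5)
Your proposal is correct and follows essentially the same route as the paper. Both arguments combine three facts to get $\epsilon_{n,1}/\sigma_n=\mathcal{O}_{\P}(\sqrt n/p)=o_{\P}(1)$: $\var(\|X_1\|_2^2)\asymp\|\Sigma\|_F^2\asymp p$, the bound $|\var(\|X_1\|_2^2)-2\|\Sigma\|_F^2|\lesssim p$ from Lemma~\ref{lemma:covZMZ}, and Lemma~\ref{lem:sumhatsjj2} applied to $\check\Sigma$. The only difference is cosmetic: you bound $\sigma_n$ and $\epsilon_{n,1}$ separately, whereas the paper factors the ratio into three pieces.
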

\noindent \textit{Proof.}
The definitions of $\epsilon_{n,1}$ and $\sigma_n$ imply that
\begin{equation*}
 \frac{\epsilon_{n,1}}{\sigma_n}=\frac{\sqrt n \big(\var(\|X_1\|_2^2)-2\|\Sigma\|_F^2\big)}{2\,\var(\|X_1\|_2^2)}\cdot \frac{ \sum_{j=1}^p(\Sigma_{jj}^2 - \check \Sigma_{jj}^2)}{\sum_{j=1}^p \Sigma_{jj}^2}\cdot\frac{\sum_{j=1}^p \Sigma_{jj}^2}{\sum_{j=1}^p \check\Sigma_{jj}^2}. 
 \end{equation*}
By Lemma~\ref{lemma:covZMZ} and the conditions in Assumption~\ref{A:model}, we have 
\begin{equation}
\label{eqn:varorder}
    \var(\|X_1\|_2^2)\asymp \|\Sigma\|_F^2
\end{equation}
and
\begin{equation*}
    \Big|\var(\|X_1\|_2^2)-2\|\Sigma\|_F^2\Big|\lesssim \|\Sigma\|_F^2
\end{equation*}
which imply
$$\frac{\sqrt n\big(\var(\|X_1\|_2^2)-2\|\Sigma\|_F^2\big)}{\,\var(\|X_1\|_2^2)} =\mathcal{O}(\sqrt n). $$
Next, since we have $\sum_{j=1}^p \Sigma_{jj}^2\gtrsim p$ under Assumption~\ref{A:model}, it follows from Lemma~\ref{lem:sumhatsjj2} that
\begin{equation*}
 \frac{\sum_{j=1}^p \Sigma_{jj}^2}{\sum_{j=1}^p \check \Sigma_{jj}^2} = \mathcal{O}_{\P}(1) \quad \text{and} \quad \frac{ \sum_{j=1}^p\Sigma_{jj}^2 - \check \Sigma_{jj}^2}{\sum_{j=1}^p \Sigma_{jj}^2} =\mathcal{O}_{\P}(\ts\frac{1}{p}).
\end{equation*}
Altogether, we conclude that $\frac{\epsilon_{n,1}}{\sigma_n}=\mathcal{O}_{\P}(\frac{\sqrt n}{p})=o_{\P}(1)$, as needed.

\qed

\begin{lemma}\label{lem:eps2}
If Assumption \ref{A:model} holds, then as $n \rightarrow \infty$
\begin{equation*}
    \frac{\epsilon_{n,2}}{\sigma_n}=o_{\P}(1).
\end{equation*}
\end{lemma}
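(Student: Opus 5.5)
We need to show $\epsilon_{n,2}/\sigma_n=o_{\P}(1)$, where
\[
\epsilon_{n,2}=\|\Sigma^{1/2}\|_4^{-4}\Big(\tilde\E(\|X_1\|_4^4)-\E(\|X_1\|_4^4)-3\sum_{j=1}^p(\tilde\Sigma_{jj}^2-\Sigma_{jj}^2)\Big).
\]
The plan is to compare orders of magnitude. By~\eqref{eqn:varorder} and $\|\Sigma\|_F^2\asymp p$, we have $\sigma_n^2\asymp \|\Sigma\|_F^4/(n p^2)\asymp 1/n$, so $\sigma_n\asymp n^{-1/2}$. For the denominator, note $(\Sigma^{1/2})_{jj}\geq \lambda_{\min}(\Sigma)^{1/2}\gtrsim 1$, which gives $\|\Sigma^{1/2}\|_4^4\geq\sum_j(\Sigma^{1/2})_{jj}^4\gtrsim p$. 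It therefore suffices to show that the numerator is $\mathcal{O}_{\P}(\sqrt{p})$, or even just $o_{\P}(p/\sqrt n)=o_{\P}(\sqrt p)$. I will aim for $\mathcal{O}_{\P}(\sqrt{p})$; this gives $\epsilon_{n,2}/\sigma_n=\mathcal{O}_{\P}(\sqrt{n}\sqrt{p}/p)=\mathcal{O}_{\P}(1)$, which is not enough. So I need to be more careful and exploit the factor from averaging over $n/2$ observations.

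\emph{First term.} $\tilde\E(\|X_1\|_4^4)-\E(\|X_1\|_4^4)$ is an average of $n/2$ i.i.d.\ centered variables, so its variance is $\frac{2}{n}\var(\|X_1\|_4^4)$. Expanding $\var(\sum_j X_{1j}^4)=\sum_{j,k}\cov(X_{1j}^4,X_{1k}^4)$, I plan to bound this by $\mathcal{O}(p)$. The diagonal terms are $\lesssim \E(X_{1j}^8)\lesssim 1$ by the eighth-moment assumption and the Rosenthal bound (Lemma~\ref{lem:Rosenthal}) applied to $X_{1j}=\sum_l(\Sigma^{1/2})_{jl}Z_{1l}$. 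For the off-diagonal terms, write $X_{1j}=a_j\ttop Z_1$ with $a_j$ the $j$th row of $\Sigma^{1/2}$. Expanding the products of fourth powers, $\cov(X_{1j}^4,X_{1k}^4)$ is a sum of products in which at least one shared index must appear, since the independent parts contribute nothing. Such a covariance is bounded by a polynomial in $\langle a_j,a_k\rangle=\Sigma_{jk}$ and sums $\sum_l a_{jl}^r a_{kl}^s$ with $r,s\geq1$. Each of these is controlled by quantities like $|\Sigma_{jk}|$ or $\sum_l |a_{jl}a_{kl}|$ times bounded factors. Summing over $j,k$ using $\|\Sigma\|_F^2\lesssim p$ and $\|(\Sigma^{1/2})^{\circ2}\|_{\op}\lesssim1$ (as in Lemma~\ref{lem:sumhatsjj2}) should give $\mathcal{O}(p)$. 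The first term is then $\mathcal{O}_{\P}(\sqrt{p/n})=\mathcal{O}_{\P}(1)$.

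\emph{Second term.} $\sum_j(\tilde\Sigma_{jj}^2-\Sigma_{jj}^2)=\mathcal{O}_{\P}(1)$ directly by Lemma~\ref{lem:sumhatsjj2}.

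Hence the numerator is $\mathcal{O}_{\P}(1)$, so $\epsilon_{n,2}=\mathcal{O}_{\P}(1/p)$ and $\epsilon_{n,2}/\sigma_n=\mathcal{O}_{\P}(\sqrt n/p)=o_{\P}(1)$, mirroring the conclusion of Lemma~\ref{lem:eps1}.

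The main obstacle is the bound $\sum_{j\neq k}\cov(X_{1j}^4,X_{1k}^4)\lesssim p$, because the coordinates of $X_1$ are dependent through a general $\Sigma^{1/2}$. I will handle it with the cumulant/moment-expansion argument above, grouping terms by the pattern of coinciding $Z$-indices. A crude fallback also works: Cauchy--Schwarz gives $\var(\|X_1\|_4^4)\lesssim p\sum_j\E X_{1j}^8\lesssim p^2$, so the first term is $\mathcal{O}_{\P}(p/\sqrt n)$ and $\epsilon_{n,2}/\sigma_n=\mathcal{O}_{\P}(1)$. That falls short by exactly the dependence structure, so the fine bound is genuinely needed. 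An alternative is to use the spectral condition ${\tt r}(\Sigma-I)=o(\sqrt p)$ and write $\Sigma^{1/2}=I+B$, with $B$ having small effective rank, so that the coordinates are nearly independent.
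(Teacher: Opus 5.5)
Your overall architecture is the same as the paper's: $\sigma_n\asymp n^{-1/2}$, $\|\Sigma^{1/2}\|_4^4\gtrsim p$, Lemma~\ref{lem:sumhatsjj2} for the diagonal sum, and a variance bound for $\tilde\E(\|X_1\|_4^4)$. What is actually required is that the numerator be $o_{\P}(\sqrt p)$, i.e.\ $\var(\|X_1\|_4^4)=o(p^2)$, so your $\mathcal{O}(p)$ target is more than needed.

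\textbf{The gap is in how you get that bound.} You plan to bound each $\cov(X_{1j}^4,X_{1k}^4)$ by $|\Sigma_{jk}|$ or $\sum_l|a_{jl}a_{kl}|$ times bounded factors, and then sum over $j,k$ using $\|\Sigma\|_F^2\lesssim p$ and $\|(\Sigma^{1/2})^{\circ 2}\|_{\op}\lesssim 1$.
\begin{itemize}
\item Those two facts only control nonnegative terms such as $\Sigma_{jk}^2$ or $\sum_l a_{jl}^2a_{kl}^2$.
\item The absolute sums $\sum_{j,k}|\Sigma_{jk}|$ and $\sum_{j,k}\sum_l|a_{jl}a_{kl}|$ are not $\mathcal{O}(p)$ in general. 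For example, $\sum_{j,k}|\Sigma_{jk}|\asymp p^{3/2}$ when $\Sigma$ has bounded spectrum and off-diagonal entries of size $p^{-1/2}$. Even under Assumption~\ref{A:model}(c), these sums can exceed $p$ by a positive power of $p$.
\item Such terms genuinely occur once $\E(Z_{11}^3)\neq 0$, which the assumption allows. Let $\kappa_m$ be the $m$th cumulant of $Z_{11}$ and $c_j=\sum_l a_{jl}^3$. The cumulant expansion of $\cov(X_{1j}^4,X_{1k}^4)$ then contains $16\kappa_3^2\Sigma_{jk}c_jc_k$ and a multiple of $\kappa_3^2\Sigma_{jj}c_k\sum_l a_{jl}^2a_{kl}$. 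Each is linear in a sign-changing mixed factor.
\end{itemize}

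\textbf{The missing idea is to keep the signs.}
\begin{itemize}
\item Every term with exactly one mixed block has the form $f_jM^{(r,s)}_{jk}g_k$, where $M^{(r,s)}=(\Sigma^{1/2})^{\circ r}(\Sigma^{1/2})^{\circ s}$ and $f,g$ have bounded entries. Its sum over $j,k$ is $f\ttop M^{(r,s)}g$. By the Hadamard-product norm inequality cited in Lemma~\ref{lem:sumhatsjj2}, this is at most $\|\Sigma^{1/2}\|_{\op}^{r+s}\|f\|_2\|g\|_2\lesssim p$.
\item For terms with two or more mixed blocks, bound all but two mixed factors by constants. Then use $\sum_{j,k}|M_{jk}M'_{jk}|\le\|M\|_F\|M'\|_F\lesssim p$.
\end{itemize}
With this repair your route does give $\var(\|X_1\|_4^4)=\mathcal{O}(p)$ for any $\Sigma$ with bounded spectrum. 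This is sharper than the paper's $o(p^{3/2})$ and does not use ${\tt r}(\Sigma-I)=o(\sqrt p)$, at the cost of eighth-order cumulant bookkeeping.

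The paper instead takes the alternative you only mention in passing (Lemma~\ref{lem:centralXl4}). It writes $\Sigma^{1/2}=I+B$ and expands $X_{1j}^4=(Z_{1j}+B_j\ttop Z_1)^4$. It bounds the five pieces in $L^2$ via H\"older, Rosenthal and the triangle inequality over $j$. Using $\|A\|_F^2\le\|A\|_{\op}\|A\|_*=o(\sqrt p)$, it obtains $\sqrt{\var(\|X_1\|_4^4)}\lesssim\sqrt p+\sqrt p\|A\|_F+\|A\|_F^2=o(p^{3/4})$. This is crude but enough, since only $o(p^2)$ is needed.
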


\noindent \textit{Proof.} Recall that
\begin{equation*}
    \epsilon_{n,2} =\ts\frac{1}{\|\Sigma^{1/2}\|_4^4}\Big(\tilde\E(\|X_1\|_4^4)-\E(\|X_1\|_4^4)- 3\sum_{j=1}^p (\tilde\Sigma_{jj}^2-\Sigma_{jj}^2)\Big)
\end{equation*}
Lemmas~\ref{lem:sumhatsjj2} and~\ref{lem:centralXl4}  respectively show that
\begin{equation*}
\begin{split}
    \sum_{j=1}^p \tilde\Sigma_{jj}^2-\Sigma_{jj}^2 =\mathcal{O}_{\P}(1) \quad \text{and} \quad \sqrt{\var(\tilde\E(\|X_1\|_4^4))} = o(p^{1/4}).
\end{split}
\end{equation*}
Also, Lemma~\ref{lem:l4diagD} ensures  $\|\Sigma^{1/2}\|_4^4\gtrsim p$, and so
$\epsilon_{n,2}=o_{\P}(p^{-3/4})$.
To conclude, it follows from~\eqref{eqn:varorder} and the conditions in Assumption~\ref{A:model} that
\begin{equation}\label{eqn:sigmaorder}
    \sigma_n\asymp \frac{1}{\sqrt n}, 
\end{equation}
yielding the stated result.

\qed

\begin{lemma}
\label{lem:centralXl4}
If Assumption \ref{A:model} holds, then as $n\to\infty$,
\begin{align}\label{eqn:L2L4bound}
\var( \|X_1\|_4^4) = o(p^{3/2}).
\end{align}
\end{lemma}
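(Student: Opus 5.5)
The plan is to bound $\var(\|X_1\|_4^4)$ with the Efron--Stein inequality, regarding $\|X_1\|_4^4=f(Z_1)$ as a function of the $p$ independent coordinates $Z_{11},\dots,Z_{1p}$. The advantage over a direct covariance expansion $\sum_{j,k}\cov(X_{1j}^4,X_{1k}^4)$ is that we never need to track the fourth-order cumulant structure of pairs $(X_{1j},X_{1k})$. We only need to measure how much each single coordinate $Z_{1l}$ can move the statistic, and this is governed by the $l$th column of $\Sigma^{1/2}$.

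Write $\Sigma^{1/2}=I+B$ with $B=\Sigma^{1/2}-I$. Since the eigenvalues of $\Sigma$ lie in a fixed compact subset of $(0,\infty)$, we have $|\lambda^{1/2}-1|\asymp|\lambda-1|$ for each eigenvalue $\lambda$. Hence $\|B\|_{\op}\lesssim 1$ and ${\tt r}(B)\lesssim {\tt r}(\Sigma-I)+1$, so that
\[
\|B\|_F^2\le \|B\|_{\op}\|B\|_*\lesssim {\tt r}(\Sigma-I)+1=o(\sqrt p)
\]
by Assumption~\ref{A:model}(c). Let $Z_1^{(l)}$ be $Z_1$ with $Z_{1l}$ replaced by an independent copy $Z_{1l}'$, and put $\Delta_l=Z_{1l}'-Z_{1l}$ and $X^{(l)}=\Sigma^{1/2}Z_1^{(l)}=X_1+\Delta_l c_l$, where $c_l$ is the $l$th column of $\Sigma^{1/2}$. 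Efron--Stein gives
\[
\var(\|X_1\|_4^4)\le \tfrac12\sum_{l=1}^p\E\big(f(Z_1)-f(Z_1^{(l)})\big)^2 .
\]
Expanding $(X_{1j}+\Delta_l c_{jl})^4-X_{1j}^4$ bounds each increment in absolute value by $\sum_{j}|c_{jl}|\,W_{jl}$, where $W_{jl}=|\Delta_l|\sum_{m=1}^{4}|X_{1j}|^{4-m}|\Delta_l|^{m-1}$ (using $|c_{jl}|\lesssim1$).

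Next I split off the diagonal entry $j=l$, whose coefficient is $c_{ll}=1+b_{ll}=O(1)$, from the off-diagonal entries $c_{jl}=b_{jl}$ with $j\ne l$. The diagonal part contributes $\E W_{ll}^2\lesssim 1$ for each $l$, hence $O(p)$ in total. For the off-diagonal part,
\[
\E\Big(\sum_{j\ne l}|b_{jl}|W_{jl}\Big)^2\le \sum_{j,k}|b_{jl}||b_{kl}|\,\|W_{jl}\|_{L^2}\|W_{kl}\|_{L^2}\lesssim \Big(\sum_j|b_{jl}|\Big)^2\le p\sum_j b_{jl}^2 .
\]
Summing over $l$ gives at most $p\|B\|_F^2=o(p^{3/2})$. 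Together with the $O(p)$ diagonal contribution, this yields~\eqref{eqn:L2L4bound}.

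The main technical point is the uniform bound $\sup_{j,l}\|W_{jl}\|_{L^2}\lesssim1$. This needs the moment bounds $\E|X_{1j}|^{6}\,\E|\Delta_l|^{2}$ and $\E|\Delta_l|^{8}$, among others; the factors $X_{1j}$ and $\Delta_l$ are not independent when $j$ depends on coordinate $l$, so I would separate them with Hölder's inequality. Each resulting factor is then controlled by $\E|X_{1j}|^8\lesssim(\sum_l c_{jl}^2)^4\lesssim1$, which follows from Rosenthal's inequality (Lemma~\ref{lem:Rosenthal}) and $\E|Z_{11}|^{8+\delta}\lesssim1$, together with $\|c_j\|_2^2=\Sigma_{jj}\lesssim1$. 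If the Hölder exponents become slightly too large, I would fall back on the $(8+\delta)$-moment margin to absorb them. Otherwise the argument is routine, and the crude Cauchy--Schwarz step $(\sum_j|b_{jl}|)^2\le p\|b_{\cdot l}\|_2^2$ is exactly what converts the effective-rank condition into the $o(p^{3/2})$ rate.
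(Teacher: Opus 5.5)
Your proof is correct, but it takes a different route from the paper's.

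\textbf{The paper's route.} The paper never uses Efron--Stein. It writes $X_{1j}=Z_{1j}+B_j^\top Z_1$ and expands $X_{1j}^4$ binomially. It then bounds the standard deviation of each centered sum $\sum_j Z_{1j}^{4-k}(B_j^\top Z_1)^k$, $k=0,\dots,4$, using the triangle inequality over $j$, H\"older, and the Rosenthal bound $\|B_j^\top Z_1\|_{L^8}\lesssim\|B_j\|_2$. Independence is used only for the $k=0$ term, which gives $\sqrt p$. The $k=1$ term gives $\sum_j\|B_j\|_2\le\sqrt p\,\|B\|_F$, and the terms with $k\ge 2$ give $\|B\|_F^2$. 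Altogether, $\sqrt{\var(\|X_1\|_4^4)}\lesssim\sqrt p+\sqrt p\,\|A\|_F+\|A\|_F^2$.

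\textbf{Your route.} You perturb one coordinate $Z_{1l}$ at a time, so the controlling object is the $l$th column of $\Sigma^{1/2}$, and independence enters uniformly through Efron--Stein. Your intermediate bound $\var(\|X_1\|_4^4)\lesssim\sum_l\big(\sum_j|(\Sigma^{1/2})_{jl}|\big)^2\lesssim p+p\|B\|_F^2$ is the squared counterpart of the paper's, minus the harmless $\|A\|_F^4$ term. Your Cauchy--Schwarz step $(\sum_j|b_{jl}|)^2\le p\|b_{\cdot l}\|_2^2$ plays exactly the role of the paper's $\sum_j\|B_j\|_2\le\sqrt p\,\|B\|_F$.

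\textbf{What each buys.} The paper's version is more elementary: it needs only the triangle inequality and Rosenthal. It also obtains $|\lambda_j(B)|\le|\lambda_j(A)|$ directly from $|\sqrt{1+x}-1|\le|x|$, rather than from the eigenvalue bounds. Yours gives a cleaner structural inequality that holds for any $\Sigma^{1/2}$ with bounded entries and bounded row norms.

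\textbf{A small simplification.} You do not need the $\delta$-margin for $\sup_{j,l}\|W_{jl}\|_{L^2}\lesssim1$. For $1\le m\le 3$, H\"older with conjugate exponents $4/(4-m)$ and $4/m$ gives $\E\big(|X_{1j}|^{2(4-m)}|\Delta_l|^{2m}\big)\le\|X_{1j}\|_{L^8}^{2(4-m)}\|\Delta_l\|_{L^8}^{2m}$, and the case $m=4$ is trivial. So eighth moments suffice exactly.
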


\noindent\textit{Proof.}
We begin with some notation and preliminary observations. Let $A$ and $B$ denote the $p\times p$ matrices that satisfy 
\begin{align}\label{eqn:Deltadef}
\Sigma=I+A\quad \text{and} \quad
\Sigma^{1/2}=  I+B.
\end{align}
 The eigenvalues of $B$ can be expressed as $\lambda_j(B)=\sqrt{1+\lambda_j(A)}-1$, and so the bound $|\sqrt{1+x} - 1| \leq |x|$  for all $x \geq -1$ implies
\begin{align}
\label{eqn:deltajbound}
|\lambda_j(B)|\, \leq \, |\lambda_j(A)|
\end{align}
for all $j=1,\dots,p$.
Next, let $B_j\in\R^p$ denote the $j$th column of $B$ so that
$$X_{1j}= Z_{1j}+ B_j\ttop Z_1.$$
To establish the bound~\eqref{eqn:L2L4bound}, the above expression for $X_{1j}$ gives
\begin{align*}
\sqrt{\var( \|X_1\|_4^4)} &= \Big\|\sum_{j=1}^p X_{1j}^4-\E(X_{1j}^4)\Big\|_{L^2}\\
& \lesssim \sum_{k=0}^4 \bigg\| \sum_{j=1}^p Z_{1j}^{(4-k)}(B_j\ttop Z_1)^k - \E\big(Z_{1j}^{(4-k)}(B_j\ttop Z_1)^k\big)\bigg\|_{L^2}\\
&=: \sum_{k=0}^4 t_k.
\end{align*}

We now proceed to bound each of the terms $t_0,\dots,t_4$ in the last sum.
The quantity $t_0$ is simply the standard deviation of a sum of centered i.i.d.~random variables and so
$$t_0=\sqrt{ \var(Z_{11}^4)\, p }\lesssim \sqrt p.$$
Next, to bound $t_1$, we have
\begin{align*}
t_1&\lesssim \sum_{j=1}^p  \big\|Z_{1j}^3 (B_j\ttop Z_1)\big\|_{L^2}\\
&\leq \|Z_{11}\|_{L^{8}}^3 \sum_{j=1}^p \|B_j\ttop Z_1\|_{L^{8}},
\end{align*}
where H\"older's inequality with conjugate exponents $(4/3,4)$ has been used in the last step.
Since $B_j\ttop Z_1$ is a sum of centered independent random variables, the norm $\|B_j\ttop Z_1\|_{L^{8}}$ can be bounded with Rosenthal's inequality (Lemma \ref{lem:Rosenthal}), 
\begin{equation}
\begin{aligned}
\| B_j\ttop Z_1 \|_{L^{8}} & \lesssim \max\Big\{ \|B_j\ttop Z_1\|_{L^2}, \big(\sum_{k=1}^p \|B_{jk} Z_{1k}\|_{L^{8}}^{8}\big)^{1/8}  \Big\}\\
& \lesssim \max\big\{ \|B_{j}\|_2, \|B_j\|_8\big\} \\
& \leq \|B_j\|_2.
\end{aligned}    
\label{eqn: rosendeltaj}
\end{equation}
Therefore, the eigenvalue bound in~\eqref{eqn:deltajbound} leads to
\begin{align*}
t_1 &\, \lesssim \, \sum_{j=1}^p \|B_j\|_2 \, \leq \,  \sqrt{p} \|B\|_F \, \lesssim \, \sqrt{p}  \|A\|_F.
\end{align*}
Similarly, with regard to $t_k$ for $k=2,3,4$, the bound $\max_{1\leq j\leq p}\|B_j\|_2\leq \|B\|_{\textup{op}}\lesssim 1$ implies
\begin{equation*}
\begin{aligned}
   t_k
   &\lesssim \|Z_{11}\|_{L^{8}}^{(4-k)} \sum_{j=1}^p \|B_j\ttop Z_1\|_{L^{8}}^k \\
    &\lesssim \sum_{j=1}^p \|B_j\|_2^k\\
    &\lesssim \sum_{j=1}^p \|B_j\|_2^2\\
    &=\|B\|_F^2\\[0.1cm]
    &\leq \|A\|_F^2,
\end{aligned}
\label{eqn:Tk}
\end{equation*}
where we have used the eigenvalue bound~\eqref{eqn:deltajbound} in the last step.
Combining the previous calculations and applying Assumption~\ref{A:model}, we conclude
\begin{align*}
\sqrt{\var(\|X_1\|_4^4)}
& \ \lesssim \ \sqrt{p} + \sqrt{p}\, \|A\|_F + \|A\|_F^2 \\
& \leq \ \sqrt{p} + \sqrt{p}\, (\|A\|_* \|A\|_\textup{op})^{1/2} + \|A\|_* \|A\|_\textup{op}  \\
& \ = \  o(p^{3/4}).
\end{align*}

\qed

\begin{lemma}\label{lem:eps3}
If Assumption \ref{A:model} holds, then as $n\to\infty$,

\begin{equation*}
    \frac{\epsilon_{n,3}}{\sigma_n}=o_{\P}(1).
\end{equation*}

\end{lemma}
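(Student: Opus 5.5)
We split $\epsilon_{n,3}$ into a product of two factors and bound each one separately. Since $\sigma_n\asymp n^{-1/2}$ by~\eqref{eqn:sigmaorder}, it suffices to show $\epsilon_{n,3}=o_{\P}(n^{-1/2})$. Write
\begin{equation*}
\epsilon_{n,3} = \Big(\tilde\E(\|X_1\|_4^4)-3\sum_{j=1}^p\tilde\Sigma_{jj}^2\Big)\cdot\frac{\|\Sigma^{1/2}\|_4^4-\|\tilde{\mathfrak{S}}^{1/2}\|_4^4}{\|\tilde{\mathfrak{S}}^{1/2}\|_4^4\,\|\Sigma^{1/2}\|_4^4}.
\end{equation*}

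For the first factor, recall from Lemma~\ref{lemma:covZMZ} that its population counterpart is $\E(\|X_1\|_4^4)-3\sum_j\Sigma_{jj}^2=(\E(Z_{11}^4)-3)\|\Sigma^{1/2}\|_4^4$, which is $\mathcal{O}(p)$. Lemmas~\ref{lem:sumhatsjj2} and~\ref{lem:centralXl4} show that the sampling fluctuation of this factor is $\mathcal{O}_{\P}(1)+o_{\P}(p^{1/4})$. Hence the first factor is $\mathcal{O}_{\P}(p)$.

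For the denominator, Lemma~\ref{lem:l4diagD} gives $\|\Sigma^{1/2}\|_4^4\gtrsim p$. Once ratio-consistency of $\|\tilde{\mathfrak{S}}^{1/2}\|_4^4$ is established, we also have $\|\tilde{\mathfrak{S}}^{1/2}\|_4^4\gtrsim p$ with probability tending to one. The problem therefore reduces to the quantitative statement
\begin{equation*}
\big|\|\tilde{\mathfrak{S}}^{1/2}\|_4^4-\|\Sigma^{1/2}\|_4^4\big| = o_{\P}(p/\sqrt n)=o_{\P}(\sqrt p).
\end{equation*}
This is a sharpening of the ratio-consistency in Lemma~\ref{lem:l4normconsistency}, which I would invoke (or re-derive at this rate) as the key input. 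The statement is precisely $\frac{\|\tilde{\mathfrak{S}}^{1/2}\|_4^4}{\|\Sigma^{1/2}\|_4^4}-1=o_{\P}(n^{-1/2})$.

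To prove the rate, write $\tilde{\mathfrak S}=I+\tilde A$ and $\Sigma=I+A$. Expand $\|(I+M)^{1/2}\|_4^4$ by splitting into diagonal and off-diagonal entries, as in the proof of Lemma~\ref{lem:centralXl4}.

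\textbf{Diagonal part.} The diagonal of the square root is driven by $\textup{Diag}(\tilde\Sigma)$, whose entries fluctuate at rate $n^{-1/2}$ each but average out. Summed over $p$ coordinates, this gives an error of order $\sqrt{p/n}\cdot\sqrt p$ for the centered linear term, plus a bias of order $p/n=\mathcal{O}(1)$.

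\textbf{Off-diagonal part.} Here the shrinkage is essential. The raw $\tilde\Sigma$ carries off-diagonal noise of total Frobenius size $\asymp p/\sqrt n\asymp\sqrt p$. The choice of $\tilde s$ in~\eqref{eq:defs} forces $\|\tilde{\mathfrak{S}}\|_F^2$ to match the bias-corrected estimate of $\|\Sigma\|_F^2$, and that estimate is accurate to $o(\sqrt p)$ because ${\tt r}(A)=o(\sqrt p)$ gives $\|A\|_F^2=o(\sqrt p)$. The fourth-power entrywise norm of the off-diagonal part of $(\tilde{\mathfrak S})^{1/2}$ is then bounded by $\max_{ij}|\cdot|^2$ times a squared Frobenius norm. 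Since $\max_{i\ne j}|\tilde\Sigma_{ij}|=\mathcal{O}_{\P}(\sqrt{\log p/n})$, this contribution is $o_{\P}(\sqrt p)$, and the corresponding population quantity $\sum_{i\neq j}(\Sigma^{1/2})_{ij}^4\le\|B\|_F^2\|B\|_{\textup{op}}^2$ is also $o(\sqrt p)$.

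\textbf{Main obstacle.} The diagonal step is the hardest one. The diagonal entries of the matrix square root depend nonlinearly on all entries, not only on $\textup{Diag}(\tilde{\mathfrak S})$. I would control this with a perturbation bound of the form $\|(I+M)^{1/2}-I-M/2\|_F\lesssim\|M\|_F\|M\|_{\textup{op}}$. The difficulty is that $\|\tilde A\|_{\textup{op}}$ need not be small, so the argument would be carried out entrywise via the resolvent/integral representation of the square root. The centered linear term is then a mean-zero sum handled by a variance computation using Lemma~\ref{lemma:covZMZ}.

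Combining the three pieces gives $\epsilon_{n,3}=\mathcal{O}_{\P}(p)\cdot o_{\P}(\sqrt p)/p^2=o_{\P}(p^{-1/2})=o_{\P}(\sigma_n)$.
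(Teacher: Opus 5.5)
Your reduction is exactly the paper's proof of this lemma. Using $\sigma_n\asymp n^{-1/2}$, the first factor being $\mathcal{O}_{\P}(p)$, and $\|\Sigma^{1/2}\|_4^4\gtrsim p$, everything comes down to $\|\tilde{\mathfrak{S}}^{1/2}\|_4^4-\|\Sigma^{1/2}\|_4^4=o_{\P}(\sqrt n)$. The paper simply cites Lemma~\ref{lem:l4normconsistency} for this; its identity $1/a-1/b=(b-a)/(a^2+a(b-a))$ also spares you the separate lower bound on $\|\tilde{\mathfrak{S}}^{1/2}\|_4^4$. Note that Lemma~\ref{lem:l4normconsistency} is \emph{stated} at exactly this rate, with ratio-consistency only as a by-product. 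So nothing needs to be sharpened, and citing it finishes your argument.

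If your re-derivation sketch is instead meant to carry the proof, it has real problems.

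(i) You estimate the centered linear diagonal term as $\sqrt{p/n}\cdot\sqrt p\asymp\sqrt p$. That is not $o(\sqrt p)$, and taken literally it would only give $\epsilon_{n,3}=\mathcal{O}_{\P}(\sigma_n)$. Its true order is $\mathcal{O}_{\P}(\sqrt{p/n})=\mathcal{O}_{\P}(1)$, since it is an average of $n/2$ i.i.d.\ centered variables $\zeta_i$ with $\var(\zeta_1)\lesssim p$, as in the proof of Lemma~\ref{lem:sumhatsjj2}.

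(ii) The genuine crux, which you leave open, is the nonlinear contribution of the off-diagonal part of $\tilde{\mathfrak{S}}$ to the diagonal of $\tilde{\mathfrak{S}}^{1/2}$; with $\tilde s=1$ it would be of order $p$. Write $M=\tilde{\mathfrak{S}}-I$ (your $\tilde A$) and $R=(I+M)^{1/2}-I-M/2$. Your Frobenius bound is too weak: it gives only $\sum_j|R_{jj}|\le\sqrt p\,\|R\|_F$, which with $\|M\|_F^2=o_{\P}(\sqrt p)$ is merely $o_{\P}(p^{3/4})$.

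The paper's device is to expand around $\textup{Diag}(\tilde\Sigma)$ instead of $I$. The Fr\'echet derivative at a diagonal point maps the perturbation $\tilde s(\tilde\Sigma-\textup{Diag}(\tilde\Sigma))$ to a matrix with zero diagonal. The remainder obeys $\|\tilde R\|_{\textup{op}}\lesssim\tilde s^2\|\tilde\Sigma-\textup{Diag}(\tilde\Sigma)\|_{\textup{op}}^2/\lambda_{\min}(\textup{Diag}(\tilde\Sigma))^{3/2}=o_{\P}(p^{-1/2})$, by Lemmas~\ref{lem:1-t order}, \ref{lem:cov op bounds} and~\ref{lem:phatSS}. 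Hence $\sum_j|\tilde R_{jj}|\le p\|\tilde R\|_{\textup{op}}=o_{\P}(\sqrt p)$, and $\|\textup{Diag}(\tilde{\mathfrak{S}}^{1/2})\|_4^4=\sum_j\tilde\Sigma_{jj}^2+o_{\P}(\sqrt p)=p+o_{\P}(\sqrt p)$.

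Your expansion around $I$ can also be rescued by a nuclear-norm bound. Here $R=f(M)$ with $f(x)=\sqrt{1+x}-1-x/2=-x^2/(2(1+\sqrt{1+x})^2)$, so $|f(x)|\le x^2/2$. Then $\sum_j|R_{jj}|\le\|f(M)\|_*\le\frac12\|M\|_F^2=o_{\P}(\sqrt p)$, and smallness of $\|M\|_{\textup{op}}$ is never needed.

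(iii) The claim $\max_{i\ne j}|\tilde\Sigma_{ij}|=\mathcal{O}_{\P}(\sqrt{\log p/n})$ fails under Assumption~\ref{A:model} whenever $\Sigma$ has non-vanishing off-diagonal entries, e.g.\ $\Sigma=I+vv^{\top}$ with $v$ supported on two coordinates. It is also unnecessary: entries of $\tilde{\mathfrak{S}}^{1/2}$ are bounded by $\|\tilde{\mathfrak{S}}^{1/2}\|_{\textup{op}}=\mathcal{O}_{\P}(1)$, while the off-diagonal Frobenius mass of $\tilde{\mathfrak{S}}^{1/2}$ is $o_{\P}(\sqrt p)$.
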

\noindent \textit{Proof.} It was noted in~\eqref{eqn:sigmaorder} that $\sigma_n\asymp 1/\sqrt n$, and so it suffices to show  $\epsilon_{n,3}=o_{\P}(1/\sqrt n)$. Recall that
\begin{equation*}
    \epsilon_{n,3}=\Big(\tilde\E(\|X_1\|_4^4)-3 \sum_{j=1}^p \tilde \Sigma_{jj}^2\Big) \Big( \ts \frac{1}{\| \tilde{\mathfrak{S}}^{1/2}\|_4^4 } - \frac{1}{\|  \Sigma^{1/2}\|_4^4 } \Big).
\end{equation*}
By Lemma~\ref{lem:sumhatsjj2}, we have $\sum_{j=1}^p \tilde\Sigma_{jj}^2=\mathcal{O}_{\P}(p)$,  and from the bound~\eqref{eqn:EX1j4} we have $\E(\tilde\E(\|X_1\|_4^4))=\sum_{j=1}^p \E(X_{1j}^4)\lesssim p$, which gives
$$\tilde\E(\|X_1\|_4^4)-3 \sum_{j=1}^p \tilde \Sigma_{jj}^2=\mathcal{O}_{\P}(p).$$ 
Also, due to Lemma~\ref{lem:l4diagD}, we know that 
\begin{equation}\label{eqn:l4normlower}
\|\Sigma^{1/2}\|_4^4\gtrsim p.
\end{equation}
 Consequently, using the basic identity $1/a-1/b= (b-a)/(a^2+a(b-a))$, the proof will be complete if we can show $    \|\tilde{\mathfrak{S}}^{1/2}\|_4^4-\|\Sigma^{1/2}\|_4^4 = o_{\P}(\sqrt n)$,
which is the content of the following lemma.\qed

\begin{lemma}\label{lem:l4normconsistency}
    If Assumption \ref{A:model}  holds, then as $n\to\infty$,
\begin{equation}\label{eqn:keydiff}
    \|\tilde{\mathfrak{S}}^{1/2}\|_4^4-\|\Sigma^{1/2}\|_4^4 = o_{\P}(\sqrt n),
\end{equation}    and in particular
$\|\tilde{\mathfrak{S}}^{1/2}\|_4^4/\|\Sigma^{1/2}\|_4^4\xrightarrow{\P}1.$

\end{lemma}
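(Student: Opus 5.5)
The plan is to avoid any spectral analysis of $\tilde{\mathfrak{S}}^{1/2}$. Instead, I compare $\|M^{1/2}\|_4^4$ with $\sum_j M_{jj}^2$ for an arbitrary positive semidefinite $M$, using the exact identity $M=(M^{1/2})^2$.

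\textbf{Step 1: a deterministic comparison.} Write $R=M^{1/2}$ and $\mathrm{off}_j=\sum_{k\neq j}R_{jk}^2\ge 0$. Then $M_{jj}=R_{jj}^2+\mathrm{off}_j$, so
$$R_{jj}^4=(M_{jj}-\mathrm{off}_j)^2=M_{jj}^2-2M_{jj}\,\mathrm{off}_j+\mathrm{off}_j^2 .$$
Also $\sum_{k\neq j}R_{jk}^4\le \mathrm{off}_j^2\le M_{jj}\,\mathrm{off}_j$, because $\mathrm{off}_j\le M_{jj}$. Summing over $j$ gives
$$\Big|\,\|M^{1/2}\|_4^4-\sum_{j=1}^p M_{jj}^2\Big|\;\lesssim\;\max_j M_{jj}\;\|R-\mathrm{Diag}(R)\|_F^2 .$$
To bound the off-diagonal part of $R$, set $D=\mathrm{Diag}(M)$. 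Since $D^{1/2}$ is diagonal, $\|R-\mathrm{Diag}(R)\|_F\le\|M^{1/2}-D^{1/2}\|_F$. The Sylvester-equation (operator-Lipschitz) bound for the square root then gives
$$\|M^{1/2}-D^{1/2}\|_F\le \frac{\|M-D\|_F}{\lambda_{\min}(D)^{1/2}}.$$
Notably, this requires only that $D$ be well conditioned, not $M$. That matters because $\tilde\Sigma$, and possibly $\tilde{\mathfrak{S}}$, can be singular when $p>n$.

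\textbf{Step 2: apply Step 1 to $\Sigma$ and to $\tilde{\mathfrak{S}}$.}

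For $M=\Sigma$: Assumption~\ref{A:model} gives $\Sigma_{jj}\asymp 1$. Also $\|\Sigma-\mathrm{Diag}(\Sigma)\|_F^2\le\|A\|_F^2\le {\tt r}(A)\|A\|_{\op}^2=o(\sqrt p)$, where $A=\Sigma-I$ (the off-diagonal parts of $\Sigma$ and $A$ coincide). Hence $\|\Sigma^{1/2}\|_4^4=\sum_j\Sigma_{jj}^2+o(\sqrt n)$.

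For $M=\tilde{\mathfrak{S}}$: its diagonal is $\mathrm{Diag}(\tilde\Sigma)$. I will show $\max_j|\tilde\Sigma_{jj}-\Sigma_{jj}|\xrightarrow{\P}0$ as follows.
\begin{itemize}
\item By Rosenthal's inequality (Lemma~\ref{lem:Rosenthal}) together with~\eqref{eqn:EX1j4} and the $8+\delta$ moments, $\E|\tilde\Sigma_{jj}-\Sigma_{jj}|^4\lesssim n^{-2}$ uniformly in $j$.
\item A union bound then gives a failure probability of order $p/n^2\to0$.
\end{itemize}
Thus $\max_j\tilde\Sigma_{jj}\lesssim1$ and $\lambda_{\min}(\mathrm{Diag}(\tilde\Sigma))\gtrsim1$ with probability tending to one. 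Since $\tilde{\mathfrak{S}}-\mathrm{Diag}(\tilde{\mathfrak{S}})=\tilde s(\tilde\Sigma-\mathrm{Diag}(\tilde\Sigma))$, Step 1 gives
$$\|\tilde{\mathfrak{S}}^{1/2}\|_4^4=\sum_j\tilde\Sigma_{jj}^2+\mathcal{O}_{\P}\Big(\tilde s^2\sum_{i\neq j}\tilde\Sigma_{ij}^2\Big).$$
Finally, Lemma~\ref{lem:sumhatsjj2} (for $\tilde\Sigma$) gives $\sum_j\tilde\Sigma_{jj}^2-\sum_j\Sigma_{jj}^2=\mathcal{O}_{\P}(1)$. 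Combining these, \eqref{eqn:keydiff} reduces to showing
$$\tilde s^2\sum_{i\ne j}\tilde\Sigma_{ij}^2=o_{\P}(\sqrt n).$$

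\textbf{Step 3 (main obstacle): the shrunken off-diagonal mass.} Without shrinkage, $\sum_{i\neq j}\tilde\Sigma_{ij}^2$ has bias $\approx\frac{2}{n}\tr(\Sigma)^2\asymp p$, which is far too large. By the definition~\eqref{eq:defs},
$$\tilde s^2\sum_{i\neq j}\tilde\Sigma_{ij}^2=\max\Big\{\sum_{i\neq j}\tilde\Sigma_{ij}^2-\tfrac{2}{n}\tr(\tilde\Sigma)^2,\;0\Big\},$$
so it suffices to show that $T:=\sum_{i\neq j}\tilde\Sigma_{ij}^2-\frac{2}{n}\tr(\tilde\Sigma)^2$ satisfies $T=\sum_{i\ne j}\Sigma_{ij}^2+\mathcal{O}_{\P}(1)$. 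The deterministic leading term is $o(\sqrt p)$ by Step 2. I would proceed in three moves.
\begin{itemize}
\item Write $\sum_{i\ne j}\tilde\Sigma_{ij}^2=\|\tilde\Sigma\|_F^2-\sum_j\tilde\Sigma_{jj}^2$. The diagonal piece is handled by Lemma~\ref{lem:sumhatsjj2}.
\item Treat $\|\tilde\Sigma\|_F^2-\frac{2}{n}\tr(\tilde\Sigma)^2$ exactly as $U_n$ in Lemma~\ref{lem: U normality}, as a U-statistic in the kernel $(X_i\ttop X_j)^2$ plus diagonal terms $\|X_i\|_2^4$. Its mean is $\|\Sigma\|_F^2+\mathcal{O}(n^{-1})(\|\Sigma\|_F^2+\var(\|X_1\|_2^2))=\|\Sigma\|_F^2+\mathcal{O}(1)$, computed via Lemma~\ref{lemma:covZMZ}.
\item Bound its standard deviation by $\mathcal{O}(\|\Sigma\|_F^2/\sqrt n)=\mathcal{O}(1)$, using the variance calculations of Lemma~\ref{lem: varhatUU} together with Lemmas~\ref{lemma: EZ1MZ2^4} and~\ref{lem:Bai,quadra}.
\end{itemize}
The bookkeeping needed to confirm that the $\frac{2}{n}\tr(\tilde\Sigma)^2$ correction cancels the bias up to $\mathcal{O}(1)$, rather than $\mathcal{O}(\sqrt p)$, is the delicate point. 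I would check it by expanding $\E\|\tilde\Sigma\|_F^2$ and $\E\,\tr(\tilde\Sigma)^2$ exactly with Lemma~\ref{lemma:covZMZ}.

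Given \eqref{eqn:keydiff}, the ratio statement follows immediately from $\|\Sigma^{1/2}\|_4^4\gtrsim p\asymp n$, which is~\eqref{eqn:l4normlower}.
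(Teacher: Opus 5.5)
\textbf{Where your route differs.} Your Steps 1 and 2 are correct, and they give a genuinely different route from the paper's.

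The paper linearizes $\tilde{\mathfrak S}^{1/2}$ around $\textup{Diag}(\tilde\Sigma)$ with an operator-norm remainder bound. That forces it to:
\begin{itemize}
\item prove the sharper fact $\tilde s^2=o_{\P}(p^{-1/2})$, which needs both the numerator and the denominator in Lemma~\ref{lem:1-t order};
\item invoke $\|\tilde\Sigma\|_{\op}=\mathcal{O}_{\P}(1)$ from Lemma~\ref{lem:cov op bounds};
\item treat the diagonal and off-diagonal fourth powers separately.
\end{itemize}
Your identity $M_{jj}=R_{jj}^2+\mathrm{off}_j$, combined with the Sylvester bound $\|M^{1/2}-D^{1/2}\|_F\le\|M-D\|_F/\lambda_{\min}(D)^{1/2}$, reduces everything to one Frobenius quantity. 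It needs neither invertibility of $\tilde{\mathfrak S}$ nor an operator-norm bound on $\tilde\Sigma$. It also uses the defining property of $\tilde s$ exactly, through $\tilde s^2\sum_{i\neq j}\tilde\Sigma_{ij}^2=\max\{T,0\}$. What remains is precisely the numerator estimate from Lemma~\ref{lem:1-t order}.

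\textbf{The gap in Step 3.} As written, Step 3 does not close. You claim a standard deviation of $\mathcal{O}(\|\Sigma\|_F^2/\sqrt n)=\mathcal{O}(1)$. Under Assumption~\ref{A:model}, however, $\|\Sigma\|_F^2\asymp p\asymp n$, so this quantity is $\asymp\sqrt n$. Then $T=o(\sqrt p)+\mathcal{O}_{\P}(\sqrt n)$, which is not $o_{\P}(\sqrt n)$.

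Following the template of $U_n$ and Lemma~\ref{lem: varhatUU} is exactly what produces this scale. $U_n$ contains $\check{\var}(\|X_1\|_2^2)$ with a coefficient of order one, and its projection $(\|X_1\|_2^2-\tr(\Sigma))^2$ has variance $\asymp\|\Sigma\|_F^4$. Equivalently, bounding the projection variance crudely by $\var((X_1\ttop X_2)^2)\lesssim\|\Sigma\|_F^4$ gives the same $\sqrt n$.

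\textbf{The missing idea.} Your statistic has a much smaller projection. Take $m=n/2$ with indices in the second half. The identity~\eqref{eqn:twoparts} then gives
\[
\|\tilde\Sigma\|_F^2-\tfrac{1}{m}\tr(\tilde\Sigma)^2=\tfrac{1}{m^2}\sum_{i\neq j}(X_i\ttop X_j)^2+\tfrac{1}{m^3}\sum_{i>j}\big(\|X_i\|_2^2-\|X_j\|_2^2\big)^2 .
\]
\begin{itemize}
\item The second sum is non-negative with mean $\asymp\var(\|X_1\|_2^2)/n=\mathcal{O}(1)$.
\item The first sum is a U statistic whose first-order projection is $X_1\ttop\Sigma X_1$. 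That projection has variance $\lesssim\|\Sigma^2\|_F^2\asymp p$, not $\|\Sigma\|_F^4$. Hence the first sum has variance $\lesssim p/n+\|\Sigma\|_F^4/n^2=\mathcal{O}(1)$.
\end{itemize}
This is the content of Lemma~\ref{lem:hatSigmaerror}. With it, $T=\sum_{i\neq j}\Sigma_{ij}^2+\mathcal{O}_{\P}(1)=o_{\P}(\sqrt n)$, and your argument is complete.

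You singled out the bias cancellation as the delicate point, but it is a routine exact computation. The real content of Step 3 is this $\mathcal{O}(1)$ fluctuation scale.
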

\proof The second limit follows from the first because of the bound~\eqref{eqn:l4normlower}.
We will establish~\eqref{eqn:keydiff} by separately considering the diagonal and off-diagonal terms in the left hand side. To handle the diagonal terms, Lemma~\ref{lem:l4diagD} shows that
\begin{equation*}
    \sum_{j=1}^p (\tilde{\mathfrak{S}}^{1/2})_{jj}^4-(\Sigma^{1/2})_{jj}^4 = o_{\P}(\sqrt n).
\end{equation*}
For the off-diagonal entries of $\tilde{\mathfrak{S}}^{1/2}$, Lemma~\ref{lem:offdiag4thpowers} shows that $\sum_{j\neq k}(\tilde{\mathfrak{S}}^{1/2})_{jk}^4=o_{\P}(n^{-1/2})$. Finally, to bound $ \sum_{j\neq k}(\Sigma^{1/2})_{jk}^4$,  recall that we use $A$ and $B$ to denote the matrices that satisfy $\Sigma=I+A$ and $\Sigma^{1/2}=I+B$. From~\eqref{eqn:deltajbound}, we know that $|\lambda_j(B)| \leq |\lambda_j(A)|$ for all $j=1,\dots,p$ and so
\begin{equation}
\begin{aligned}
\label{eqn:offSigmahalf4}
\sum_{j\neq k} (\Sigma^{1/2})_{jk}^4 &=  \sum_{j \neq k} B_{jk}^4\\
&\leq \|B\|_{\textup{op}}^2 \|B\|_F^2\\
&\lesssim  \|A\|_{\textup{op}}^3\|A\|_*\\[0.1cm]
&= o(\sqrt n),
\end{aligned}    
\end{equation}
where Assumption~\ref{A:model} has been used in the last step.

\qed

\begin{lemma}\label{lem:offdiag4thpowers}
If Assumption \ref{A:model} holds, then as $n\to\infty$,
    \begin{equation*}
        \sum_{j\neq k}(\tilde{\mathfrak{S}}^{1/2})^4_{jk} \,=\,o_{\P}(n^{-1/2}).
    \end{equation*}
\end{lemma}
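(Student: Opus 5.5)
The plan is to treat $\tilde{\mathfrak{S}}$ as a diagonal matrix plus a small off-diagonal perturbation, and to expand the matrix square root to first order. Write $\tilde{\mathfrak{S}}=\tilde D+\tilde E$, where $\tilde D=\textup{Diag}(\tilde\Sigma)$ and $\tilde E=\tilde s\,(\tilde\Sigma-\textup{Diag}(\tilde\Sigma))$. Since $\tilde D^{1/2}$ is diagonal, the off-diagonal entries of $\tilde{\mathfrak{S}}^{1/2}$ coincide with those of $R:=\tilde{\mathfrak{S}}^{1/2}-\tilde D^{1/2}$. The argument rests on three ingredients:
\begin{itemize}
\item[(i)] the quantitative size of $\tilde s$;
\item[(ii)] a first-order (Daleckii--Krein) expansion $R=L+Q$ with $L_{jk}=\tilde E_{jk}/(\tilde D_{jj}^{1/2}+\tilde D_{kk}^{1/2})$;
\item[(iii)] the elementary inequality $\sum_{j\neq k}(L+Q)_{jk}^4\le 8\sum_{j\neq k}L_{jk}^4+8\|Q\|_{\textup{op}}^2\|Q\|_F^2$.
\end{itemize}

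\textbf{Step 1 (preliminary bounds).} Using $\E|Z_{11}|^{8+\delta}\lesssim 1$, a union bound over $j$ with fourth-moment (Rosenthal) tail bounds gives $\min_j\tilde\Sigma_{jj}\gtrsim 1$ and $\max_j\tilde\Sigma_{jj}\lesssim 1$ with probability tending to one. Standard bounds on extreme eigenvalues give $\|\tilde\Sigma\|_{\textup{op}}=\mathcal{O}_{\P}(1)$, and hence $\|\tilde\Sigma-\textup{Diag}(\tilde\Sigma)\|_{\textup{op}}=\mathcal{O}_{\P}(1)$. Consequently $\|\tilde E\|_{\textup{op}}\lesssim \tilde s$, and $\lambda_{\min}(\tilde{\mathfrak{S}})\ge\min_j\tilde\Sigma_{jj}-\|\tilde E\|_{\textup{op}}$ is bounded away from zero once we know $\tilde s=o_{\P}(1)$.

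\textbf{Step 2 (size of $\tilde s$).} Let $T=\sum_{i\neq j}\tilde\Sigma_{ij}^2$. When $\tilde s>0$,
\[
\tilde s^2=\frac{T-\frac2n\tr(\tilde\Sigma)^2}{T},
\]
and the numerator equals $\|\tilde\Sigma\|_F^2-\frac{1}{n/2}\tr(\tilde\Sigma)^2-\sum_j\tilde\Sigma_{jj}^2$. By the $U$-statistic analysis behind Lemma~\ref{lem: varhatUU}, the bias-corrected estimate of $\|\Sigma\|_F^2$ has fluctuations of order $\|\Sigma\|_F^2/\sqrt n\asymp\sqrt p$. Combined with Lemma~\ref{lem:sumhatsjj2} for the diagonal part, this shows the numerator is
\[
\sum_{i\neq j}\Sigma_{ij}^2+\mathcal{O}_{\P}(\sqrt p)\le\|A\|_F^2+\mathcal{O}_{\P}(\sqrt p)=\mathcal{O}_{\P}(\sqrt p),
\]
where the last step uses $\|A\|_F^2\le\|A\|_{\textup{op}}\|A\|_*=o(\sqrt p)$. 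Meanwhile $T\ge\frac2n\tr(\tilde\Sigma)^2\gtrsim p$ with high probability. Hence
\[
\tilde s^2=\mathcal{O}_{\P}(p^{-1/2})\quad\text{and}\quad\|\tilde E\|_F^2=\tilde s^2T=\mathcal{O}_{\P}(\sqrt p).
\]
The case $\tilde s=0$ is trivial, since then $\tilde{\mathfrak{S}}$ is diagonal.

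\textbf{Step 3 (linear term).} We have $\sum_{j\ne k}L_{jk}^4\lesssim\tilde s^4\sum_{j\neq k}\tilde\Sigma_{jk}^4$. For $j\neq k$,
\[
\E\,\tilde\Sigma_{jk}^4\lesssim\Sigma_{jk}^4+n^{-2},
\]
which follows from Rosenthal's inequality applied to the i.i.d.\ average $\tilde\Sigma_{jk}$, whose summands have bounded fourth moments. Summing over $j\neq k$ gives
\[
\sum_{j\neq k}\tilde\Sigma_{jk}^4=\mathcal{O}_{\P}\big(\|A\|_{\textup{op}}^2\|A\|_F^2+p^2/n^2\big)=o_{\P}(\sqrt p)+\mathcal{O}_{\P}(1).
\]
Multiplying by $\tilde s^4=\mathcal{O}_{\P}(p^{-1})$ yields $o_{\P}(p^{-1/2})=o_{\P}(n^{-1/2})$.

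\textbf{Step 4 (quadratic remainder).} This step is the main obstacle. We need $\|Q\|_{\textup{op}}\lesssim\|\tilde E\|_{\textup{op}}^2$ and $\|Q\|_F\lesssim\|\tilde E\|_{\textup{op}}\|\tilde E\|_F$, uniformly in $p$, for the second-order remainder of $X\mapsto X^{1/2}$ at a diagonal matrix whose spectrum lies in a fixed compact subset of $(0,\infty)$. I would obtain these from the integral representation
\[
X^{1/2}=\frac1\pi\int_0^\infty t^{-1/2}\,X(X+t)^{-1}\,dt .
\]
Expanding the resolvent $(\tilde D+\tilde E+t)^{-1}$ to second order in $\tilde E$, the remainder has the form $\int t^{-1/2}(\ldots)\tilde E(\tilde D+t)^{-1}\tilde E(\ldots)\,dt$. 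Its operator norm is controlled by $\|\tilde E\|_{\textup{op}}^2$, and its Frobenius norm by $\|\tilde E\|_{\textup{op}}\|\tilde E\|_F$, with the $t$-integrals converging because $\lambda_{\min}\gtrsim1$ on the event from Step~1. Then
\[
\|Q\|_{\textup{op}}^2\|Q\|_F^2\lesssim\|\tilde E\|_{\textup{op}}^6\|\tilde E\|_F^2=\mathcal{O}_{\P}\big(p^{-3/2}\cdot\sqrt p\big)=\mathcal{O}_{\P}(p^{-1}).
\]
This is $o_{\P}(n^{-1/2})$. Combining Steps 3 and 4 through the inequality in (iii) proves the lemma.

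The key point is to avoid the crude estimate $\|R\|_{\textup{op}}^2\|R\|_F^2$, which only yields $o_{\P}(1)$. The argument instead isolates the linear term, whose fourth powers are small entrywise (Step 3), and pays the operator-norm price only for the quadratic remainder, where the extra factors of $\tilde s$ suffice.
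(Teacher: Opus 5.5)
Your proposal is correct and follows essentially the same route as the paper. Both arguments expand $\tilde{\mathfrak{S}}^{1/2}$ to first order around $\textup{Diag}(\tilde\Sigma)$ (Daleckii--Krein), control the explicit linear term entrywise through $\tilde s^4\sum_{j\neq k}\tilde\Sigma_{jk}^4$ together with Rosenthal's inequality for $\E(\tilde\Sigma_{jk}^4)$, and bound the quadratic remainder in operator norm by a quantity of order $\tilde s^2$. The differences are minor:
\begin{itemize}
\item You derive the remainder bounds from the resolvent integral rather than quoting a known Taylor-remainder bound for the matrix square root.
\item You only obtain $\tilde s^2=\mathcal{O}_{\P}(p^{-1/2})$, whereas the paper gets $o_{\P}(p^{-1/2})$ via Lemma~\ref{lem:hatSigmaerror}, whose bias-corrected Frobenius estimate has error $\mathcal{O}_{\P}(1)$ rather than your $\mathcal{O}_{\P}(\sqrt p)$. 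As you verify, the weaker rate suffices.
\item Your refinement $\|Q\|_F\lesssim\|\tilde E\|_{\textup{op}}\|\tilde E\|_F$ gives the same order $\mathcal{O}_{\P}(1)$ for $\|Q\|_F^2$ as the paper's cruder bound $\|\tilde R\|_F^2\le p\|\tilde R\|_{\textup{op}}^2$, so it buys nothing here.
\end{itemize}
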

\proof
Rearranging the definition of $\tilde{\mathfrak{S}}$ in \eqref{eq:defS} gives
\begin{align*}
\tilde{\mathfrak{S}} 
 = \textup{Diag}(\tilde \Sigma) + \tilde s (\tilde \Sigma - \textup{Diag}(\tilde \Sigma)).
\end{align*}
In Lemma~\ref{lem:1-t order}, it is shown that $\tilde s=o_{\P}(n^{-1/4})$, which motivates approximating the matrix square root of $\tilde{\mathfrak{S}}$ with that of $\textup{Diag}(\tilde\Sigma)$ via a matrix-valued Taylor expansion. Such an expansion requires $\tilde{\mathfrak{S}}$ and $\Diag(\tilde\Sigma)$ to be positive definite, and it follows from Lemmas~\ref{lem:1-t order} and~\ref{lem:phatSS} that this occurs with probability tending to 1 as $n\to\infty$. For this reason, the event that either of these matrices is singular will not affect the following analysis. 

To proceed, let $\tilde R\in\R^{p\times p}$ be a remainder matrix defined so that 
\begin{equation}
    \label{eq:defR}\tilde{\mathfrak{S}}^{1/2}= \textup{Diag}(\tilde\Sigma)^{1/2}+ \tilde s \int_0^{\infty} e^{-t \textup{Diag}(\tilde\Sigma)^{1/2}}(\tilde\Sigma-\textup{Diag}(\tilde \Sigma))e^{-t \textup{Diag}(\tilde\Sigma)^{1/2}}\mathrm{d}t+ \tilde R, 
\end{equation}
where the integral corresponds to the differential of the matrix square root function at $\textup{Diag}(\tilde\Sigma)$, acting upon the difference matrix $\tilde{\mathfrak{S}}-\Diag(\tilde\Sigma)=\tilde s (\tilde \Sigma - \textup{Diag}(\tilde \Sigma))$~\citep[][Theorem 1.1]{del2018taylor}. Because the expansion is done around a diagonal matrix, the integral can be computed explicitly, which leads to
\begin{align}\label{eqn:siqsqrtexpand}
(\tilde{\mathfrak{S}}^{1/2})_{jk} 
& =  \frac{\tilde s \tilde \Sigma_{jk}}{(\tilde \Sigma_{jj})^{1/2} + (\tilde \Sigma_{kk})^{1/2}} + \tilde R_{jk} \ \ \ \ \ \text{ for $j\neq k$}. 
\end{align}
 Furthermore, it is known from equation 9 in~\citep[]{del2018taylor} that the operator norm of the remainder satisfies the bound
\begin{equation*}
\begin{aligned}
\|\tilde R\|_{\textup{op}} & \leq \frac{ \tilde s^2 \big\| \tilde \Sigma - \textup{Diag}(\tilde \Sigma) \big\|_{\textup{op}}^2}{2(\lambda_{\min}(\textup{Diag}(\tilde \Sigma)))^{3/2}}  
\end{aligned}
\label{eqn:operatorR}
\end{equation*}
almost surely. Regarding the order of this bound, Lemma~\ref{lem:1-t order} shows that $\tilde s^2=o_{\P}(p^{-1/2})$, Lemma~\ref{lem:phatSS} shows that $\lambda_{\min}(\textup{Diag}(\tilde\Sigma))\geq c-o_{\P}(1)$ for a constant $c>0$ not depending on $n$, and Lemma \ref{lem:cov op bounds} implies $\| \tilde \Sigma - \textup{Diag}(\tilde \Sigma) \|_{\textup{op}}=\mathcal{O}_{\P}(1)$, leading to
\begin{align*}
\|\tilde R\|_{\textup{op}} = o_{\P}(p^{-1/2}).
\end{align*}
Similarly, when the lemmas just mentioned are applied to~\eqref{eqn:siqsqrtexpand} with the bound $\|\tilde R\|_4^4\leq \|\tilde R\|_{\textup{op}}^2\|\tilde R\|_F^2\leq p\|\tilde R\|_{\textup{op}}^4$, we obtain
\begin{align}
\sum_{j \neq k} (\tilde{\mathfrak{S}}^{1/2})_{j k}^4
& =  o_{\P}(\ts\frac{1}{p})\displaystyle\sum_{j\neq k} \tilde\Sigma_{jk}^4 + o_{\P}(\ts\frac{1}{p}).
\label{eqn:offSigmat4}
\end{align}
Next, we will show that the sum of off-diagonal terms on the right side is $o_{\P}(\sqrt n)$ by showing that its expectation is $o(\sqrt n)$. To see this, first observe that 
\begin{equation*}
    \sum_{j\neq k}\E(\tilde\Sigma_{jk}^4) \ \lesssim \ \sum_{j\neq k}\Big(\|\tilde\Sigma_{jk}-\Sigma_{jk}\|_{L^4}^4+ \Sigma_{jk}^4\Big).
\end{equation*}
It has been shown in~\eqref{eqn:offSigmahalf4} that $\sum_{j\neq k}\Sigma_{jk}^4=o(n^{1/2})$, and also, Rosenthal's inequality (Lemma \ref{lem:Rosenthal}) implies
\begin{align*}
\|\tilde\Sigma_{jk}-\Sigma_{jk}\|_{L^4}  & \, =\, 
\Big\|\ts\frac{1}{n/2} \sum_{i>\frac{n}{2}}\big( X_{ij} X_{ik} - \Sigma_{jk}\big) \Big\|_{L^4}\\
& \lesssim \max \bigg\{ n^{-1/2} \sqrt{\var(X_{1j} X_{1k})}, n^{-1}\Big(\sum_{i>\frac{n}{2}}^n \|X_{ij} X_{ik} - \Sigma_{jk}\|_{L^4}^4\Big)^{1/4} \bigg\} \\
& \lesssim n^{-1/2},
\end{align*}
where we have used $ \|X_{1j}\|_{L^8}\lesssim 1$, which can be established using an argument similar to~\eqref{eqn: rosendeltaj}.
Combining the last several steps shows that $\sum_{j\neq k}\E(\tilde\Sigma_{jk}^4)=o(n^{1/2})$. Applying this to~\eqref{eqn:offSigmat4} completes the proof.

\qed

\begin{lemma}
\label{lem:l4diagD}
If Assumption \ref{A:model}  holds,  then as $n\to\infty$, 
\begin{align}
\label{eqn:l4diagD}
\frac{\|\textup{Diag}(\Sigma^{1/2})\|_4^4-p}{\sqrt n} \rightarrow 0,
\end{align}
and
\begin{align}\label{eqn:l4diagDtilde}
\frac{\|\textup{Diag}(\tilde{\mathfrak{S}}^{1/2})\|_4^4-p}{\sqrt n}\xrightarrow{\P} 0.
\end{align}
In addition, the last limit holds when $\tilde{\mathfrak{S}}$ is replaced with $\hat{\mathfrak{S}}$.
\end{lemma}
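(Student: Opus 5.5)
For \eqref{eqn:l4diagD}, I would use the notation $\Sigma=I+A$ and $\Sigma^{1/2}=I+B$ from the proof of Lemma~\ref{lem:centralXl4}. Then $(\Sigma^{1/2})_{jj}=1+B_{jj}$. Expanding the fourth power gives
\[
\|\textup{Diag}(\Sigma^{1/2})\|_4^4-p=\sum_{j=1}^p\big(4B_{jj}+6B_{jj}^2+4B_{jj}^3+B_{jj}^4\big).
\]
Since $|B_{jj}|\le\|B\|_{\textup{op}}\lesssim 1$, this is bounded in absolute value by a constant times $\sum_j|B_{jj}|+\sum_j B_{jj}^2$. For the linear part I would bound $\sum_j|B_{jj}|\le\|B\|_*\le\|A\|_*$, using the eigenvalue comparison~\eqref{eqn:deltajbound}; the first inequality holds because the diagonal entries are majorized by the eigenvalues. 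For the quadratic part, $\sum_jB_{jj}^2\le\|B\|_F^2\le\|A\|_F^2\le\|A\|_{\textup{op}}\|A\|_*$. Assumption~\ref{A:model}(c) gives $\|A\|_*={\tt r}(A)\|A\|_{\textup{op}}=o(\sqrt p)=o(\sqrt n)$, and this yields \eqref{eqn:l4diagD}.

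For \eqref{eqn:l4diagDtilde}, I would first dispose of the exact $p$ by comparing to $\sum_j\tilde\Sigma_{jj}^2$. I would split the quantity as
\[
\|\textup{Diag}(\tilde{\mathfrak S}^{1/2})\|_4^4-p=\Big[\sum_j(\tilde{\mathfrak S}^{1/2})_{jj}^4-\sum_j\tilde\Sigma_{jj}^2\Big]+\Big[\sum_j\tilde\Sigma_{jj}^2-\sum_j\Sigma_{jj}^2\Big]+\Big[\sum_j\Sigma_{jj}^2-p\Big].
\]
The middle bracket is $\mathcal O_{\P}(1)$ by Lemma~\ref{lem:sumhatsjj2}. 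The last bracket equals $\sum_j(2A_{jj}+A_{jj}^2)$, which is $O(\|A\|_*+\|A\|_F^2)=o(\sqrt n)$ by the same reasoning as above.

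For the first bracket, I would use the Taylor expansion \eqref{eq:defR} of Lemma~\ref{lem:offdiag4thpowers}. The first-order term has zero diagonal, because $\tilde\Sigma-\textup{Diag}(\tilde\Sigma)$ is off-diagonal and the expansion is around a diagonal matrix. Hence $(\tilde{\mathfrak S}^{1/2})_{jj}=\tilde\Sigma_{jj}^{1/2}+\tilde R_{jj}$ and $|\tilde R_{jj}|\le\|\tilde R\|_{\textup{op}}=o_{\P}(p^{-1/2})$. By the mean value theorem,
\[
\big|(\tilde\Sigma_{jj}^{1/2}+\tilde R_{jj})^4-\tilde\Sigma_{jj}^2\big|\lesssim|\tilde R_{jj}|\big(\tilde\Sigma_{jj}^{3/2}+|\tilde R_{jj}|^3\big).
\]
Summing over $j$ gives at most $o_{\P}(p^{-1/2})\sum_j\tilde\Sigma_{jj}^{3/2}+o_{\P}(1)$. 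Because $\sum_j\tilde\Sigma_{jj}^{3/2}\le p^{1/4}(\sum_j\tilde\Sigma_{jj}^2)^{3/4}=\mathcal O_{\P}(p)$, this yields $o_{\P}(\sqrt p)$. This step relies on $\tilde s=o_{\P}(n^{-1/4})$ (Lemma~\ref{lem:1-t order}), on positive definiteness and the lower bound on $\lambda_{\min}(\textup{Diag}(\tilde\Sigma))$ (Lemma~\ref{lem:phatSS}), and on $\|\tilde\Sigma-\textup{Diag}(\tilde\Sigma)\|_{\textup{op}}=\mathcal O_{\P}(1)$ (Lemma~\ref{lem:cov op bounds}). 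These are exactly the inputs already used to bound $\tilde R$.

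The main obstacle is keeping the remainder at $o_{\P}(\sqrt n)$ after summing $p$ diagonal terms. The operator-norm bound $\|\tilde R\|_{\textup{op}}=o_{\P}(p^{-1/2})$ is just enough, so the plan hinges on the rate for $\tilde s$ in Lemma~\ref{lem:1-t order}. If that rate proved too weak, I would instead bound $\sum_j|\tilde R_{jj}|\le\|\tilde R\|_*$ via a second-order expansion whose nuclear norm is controlled by $\tilde s^2\|\tilde\Sigma-\textup{Diag}(\tilde\Sigma)\|_F^2$. The argument for $\hat{\mathfrak S}$ is identical, with the full sample in place of the second half.
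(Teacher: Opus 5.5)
Your proposal is correct and follows essentially the same route as the paper. For the population term you expand $(1+B_{jj})^4-1$ and control it by $\|B\|_*\le\|A\|_*=o(\sqrt p)$, with the quadratic part handled via $\|A\|_F^2\le\|A\|_{\textup{op}}\|A\|_*$; for the estimated term you write $(\tilde{\mathfrak S}^{1/2})_{jj}=\tilde\Sigma_{jj}^{1/2}+\tilde R_{jj}$ with $\|\tilde R\|_{\textup{op}}=o_{\P}(p^{-1/2})$ and then pass from $\sum_j\tilde\Sigma_{jj}^2$ to $\sum_j\Sigma_{jj}^2$ to $p$, exactly as the paper does (the only cosmetic difference is that you bound $\sum_j\tilde\Sigma_{jj}^{3/2}$ by H\"older and Lemma~\ref{lem:sumhatsjj2}, whereas the paper uses $\|\tilde\Sigma\|_{\textup{op}}=\mathcal{O}_{\P}(1)$ from Lemma~\ref{lem:cov op bounds}).
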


\noindent \textit{Proof.} We first prove the limit~\eqref{eqn:l4diagD}. Recalling the notation in~\eqref{eqn:Deltadef}, we have
\begin{align*}
\Big|\|\textup{Diag}(\Sigma^{1/2})\|_4^4 - p  \Big|
& \, = \, \Big| \sum_{j=1}^p \big((1+B_{jj})^4 -1\big) \Big| \\
& \lesssim  \|B\|_*\Big(  1+ \|B\|_{\op}+  \|B\|_{\op}^2 + \|B\|_{\op}^3\Big) \\
& \lesssim  \|A\|_*\\
&=o(\sqrt p), 
\end{align*}
where we have used Assumption \ref{A:model} and the bound $|\lambda_j(B)|\leq |\lambda_j(A)|$ for all $j=1,\dots,p$ from \eqref{eqn:deltajbound}.

To show \eqref{eqn:l4diagDtilde}, let $\tilde R\in\R^{p\times p}$ be as defined in the proof of Lemma~\ref{lem:offdiag4thpowers} so that  $(\tilde {\mathfrak{S}}^{1/2})_{jj} = (\tilde\Sigma_{jj})^{1/2} + \tilde R_{jj}$ for all $j=1,\dots,p$, and $\|\tilde R\|_{\textup{op}}=o_{\P}(p^{-1/2})$. Also, by Lemma~\ref{lem:cov op bounds}, we have $\|\tilde\Sigma\|_{\textup{op}}=\mathcal{O}_{\P}(1)$, which gives
\begin{equation*}
\begin{aligned}
\|\textup{Diag}(\tilde{\mathfrak{S}}^{1/2})\|_4^4 & \, = \, \sum_{j=1}^p \tilde\Sigma_{jj}^2 \ + \  \sum_{l=0}^3 \binom{4}{l} \sum_{j=1}^p (\tilde\Sigma_{jj})^{l/2}\tilde R_{jj}^{(4-l)}\\
&=\, \sum_{j=1}^p \tilde \Sigma_{jj}^2 \ + \ o_{\P}(\sqrt p).
\end{aligned}
\end{equation*}
Furthermore, Lemma \ref{lem:sumhatsjj2} and Assumption~\ref{A:model} give
\begin{align*}
\sum_{j=1}^p \tilde\Sigma_{jj}^2 &= \sum_{j=1}^p \Sigma_{jj}^2 + \mathcal{O}_{\P}(1)\\
&=\sum_{j=1}^p (1+A_{jj})^2 +\mathcal{O}_{\P}(1),\\
&=p+\mathcal{O}\big(\|A\|_*(1+\|A\|_{\textup{op}})\big)+\mathcal{O}_{\P}(1)\\
&= p + o_{\P}(\sqrt{p}),
\end{align*}
which completes the proof.

\qed

\begin{lemma}
\label{lem:1-t order} If Assumption \ref{A:model}  holds, then as $n\to\infty$,
\begin{align*}
\tilde s^2 =o_{\P}(p^{-1/2}).
\end{align*}

\end{lemma}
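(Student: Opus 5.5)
By the definition~\eqref{eq:defs}, on the event that the minimum is attained by the ratio, we have
\[
\tilde s^2=\frac{\sum_{i\neq j}\tilde\Sigma_{ij}^2-\frac{2}{n}\tr(\tilde\Sigma)^2}{\sum_{i\neq j}\tilde\Sigma_{ij}^2},
\]
and otherwise $\tilde s=0$. It therefore suffices to show two things. The first is that the numerator is $o_{\P}(p^{3/2})$, or more precisely $o_{\P}(p^{1/2})\cdot\sum_{i\neq j}\tilde\Sigma_{ij}^2$. The second is that the denominator satisfies $\sum_{i\neq j}\tilde\Sigma_{ij}^2\gtrsim p$ with probability tending to one. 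Since $\tilde s^2\ge 0$, only an upper bound on the numerator is needed; a negative numerator simply gives $\tilde s=0$.

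\textbf{Denominator.} I will show $\E\sum_{i\neq j}\tilde\Sigma_{ij}^2\asymp p^2/n\asymp p$ with vanishing relative variance. Write $\E(\tilde\Sigma_{ij}^2)=\Sigma_{ij}^2+\frac{2}{n}\var(X_{1i}X_{1j})$, where $\var(X_{1i}X_{1j})=\Sigma_{ii}\Sigma_{jj}+\Sigma_{ij}^2+(\E Z_{11}^4-3)\sum_l(\Sigma^{1/2})_{li}^2(\Sigma^{1/2})_{lj}^2$ by Lemma~\ref{lemma:covZMZ}. Summing over $i\ne j$ gives $\frac2n(\tr(\Sigma)^2+O(p))$, which is $\asymp p$ because $\tr\Sigma\asymp p$. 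Concentration follows by noting that $\sum_{i\ne j}\tilde\Sigma_{ij}^2=\|\tilde\Sigma\|_F^2-\sum_j\tilde\Sigma_{jj}^2$. The diagonal part is handled by Lemma~\ref{lem:sumhatsjj2}. For $\|\tilde\Sigma\|_F^2$, expand it as $\frac{4}{n^2}\sum_{i,i'}(X_i\ttop X_{i'})^2$ and bound its variance through the diagonal terms $\|X_i\|_2^4$ and the U-statistic part, using Lemma~\ref{lemma: EZ1MZ2^4} and the quadratic-form moment bounds of Lemma~\ref{lem:Bai,quadra}. This should give a standard deviation of $O(p^{1/2})$ or so, which is $o(p)$.

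\textbf{Numerator.} Decompose
\[
\sum_{i\neq j}\tilde\Sigma_{ij}^2-\tfrac2n\tr(\tilde\Sigma)^2=\Big(\|\tilde\Sigma\|_F^2-\tfrac{2}{n}\tr(\tilde\Sigma)^2\Big)-\sum_j\tilde\Sigma_{jj}^2 .
\]
The bias-corrected term $\|\tilde\Sigma\|_F^2-\frac{2}{n}\tr(\tilde\Sigma)^2$ is, up to $O(1/n)$ factors, a U-statistic in the $X_i$ that estimates $\|\Sigma\|_F^2$. This is the same structure as $U_n$ in~\eqref{eqn:Udef}, so its fluctuation is $O_{\P}(\|\Sigma\|_F^2/\sqrt n)=O_{\P}(\sqrt p)$, and its bias is $O(\|\Sigma\|_F^2/n+\var(\|X_1\|_2^2)/n)=O(1)$. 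Lemma~\ref{lem:sumhatsjj2} gives $\sum_j\tilde\Sigma_{jj}^2=\sum_j\Sigma_{jj}^2+O_{\P}(1)$. Hence the numerator equals $\sum_{i\ne j}\Sigma_{ij}^2+O_{\P}(\sqrt p)$.

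The population off-diagonal mass is controlled by the effective-rank condition. Writing $\Sigma=I+A$, we have $\sum_{i\ne j}\Sigma_{ij}^2=\sum_{i\neq j}A_{ij}^2\le\|A\|_F^2\le\|A\|_{\op}\|A\|_*=o(\sqrt p)$ by Assumption~\ref{A:model}(c). So the numerator is $O_{\P}(\sqrt p)$, and dividing by a denominator of order $p$ yields $\tilde s^2=O_{\P}(p^{-1/2})$.

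This falls short of the claimed $o_{\P}$ by exactly the size of the U-statistic fluctuation. Closing this gap is the main obstacle. I would address it by computing the fluctuation of the numerator more precisely, as a degenerate-type statistic. The leading Hájek projection of $\|\tilde\Sigma\|_F^2$ has variance driven by $\var(X_1\ttop\Sigma X_1)\lesssim\tr(\Sigma^2)\asymp p$, scaled by $1/n^2$, giving order $p/n^2$. The projection of $\sum_j\tilde\Sigma_{jj}^2$ has a similar form. The key point is that these two linear projections should cancel up to terms involving $A$: when $\Sigma=I$, both reduce to $\frac2n\sum_i\sum_j\Sigma_{jj}(X_{ij}^2-\Sigma_{jj})$-type sums. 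What remains is governed by $\|A\|_F=o(p^{1/4})$ together with the degenerate second-order part, whose variance is $O(\|\Sigma\|_F^4/n^2)=O(1)$. Carrying out this cancellation carefully, with the moment bounds from Lemmas~\ref{lemma:covZMZ} and~\ref{lemma: EZ1MZ2^4}, should upgrade the numerator bound to $o_{\P}(\sqrt p)$ and complete the proof.
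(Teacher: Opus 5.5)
There is a genuine gap, but it comes from a miscalculation rather than a missing idea. Your numerator bound relies on the claim that $\|\tilde\Sigma\|_F^2-\frac{2}{n}\tr(\tilde\Sigma)^2$ fluctuates at scale $\|\Sigma\|_F^2/\sqrt n\asymp\sqrt p$ ``because it has the same structure as $U_n$.'' That analogy fails. The $\sqrt p$-size fluctuations of $U_n$ come from the sample-variance term $\check\var(\|X_1\|_2^2)$, whose H\'ajek projection involves $(\|X_1\|_2^2-\tr(\Sigma))^2$, with variance $\asymp\|\Sigma\|_F^4$. The bias-corrected Frobenius statistic has no such term. Write $m=n/2$:
\begin{itemize}
\item[(i)] The H\'ajek projection of $\frac{1}{m^2}\sum_{i\neq i'}(X_i\ttop X_{i'})^2$ is driven by $X_1\ttop\Sigma X_1-\|\Sigma\|_F^2$, whose variance is $\lesssim\tr(\Sigma^4)\lesssim p$. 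So the projection has variance $\lesssim p/n\asymp 1$.
\item[(ii)] The degenerate part has variance $\lesssim\|\Sigma\|_F^4/n^2\asymp 1$.
\item[(iii)] By the identity~\eqref{eqn:twoparts}, the diagonal terms $\|X_i\|_2^4$ combine with the $\tr(\tilde\Sigma)^2$ correction into a nonnegative term with mean $\lesssim\|\Sigma\|_F^2/n\asymp 1$.
\end{itemize}
This is exactly Lemma~\ref{lem:hatSigmaerror}: the error is $\mathcal{O}_{\P}(\|\Sigma\|_F^2/n)=\mathcal{O}_{\P}(1)$, not $\mathcal{O}_{\P}(\sqrt p)$.

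With the correct bound, together with Lemma~\ref{lem:sumhatsjj2} for the diagonal sum, the numerator is
\[
\sum_{i\neq j}A_{ij}^2+\mathcal{O}_{\P}(1)\le\|A\|_{\op}\|A\|_*+\mathcal{O}_{\P}(1)=o_{\P}(\sqrt p).
\]
Your denominator argument then gives $\tilde s^2=o_{\P}(p^{-1/2})$ immediately. The paper uses a shorter route for the denominator: it writes the denominator as the numerator plus $\frac{2}{n}\tr(\tilde\Sigma)^2$, and uses $\tr(\tilde\Sigma)/\tr(\Sigma)\xrightarrow{\P}1$ with $\tr(\Sigma)\gtrsim p$.

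No cancellation between the projections of $\|\tilde\Sigma\|_F^2$ and $\sum_j\tilde\Sigma_{jj}^2$ is needed, since each is individually within $\mathcal{O}_{\P}(1)$ of its target. As written, though, your proposal proves only $\tilde s^2=\mathcal{O}_{\P}(p^{-1/2})$. It rests the $o_{\P}$ claim on a cancellation heuristic that you never carry out, and whose scaling is off: the projection variance is of order $p/n$, not $p/n^2$. Your own variance estimates in that last paragraph (projection and degenerate part both $\mathcal{O}(1)$) already contradict your earlier $\sqrt p$ fluctuation claim. Had you followed them through, you would have closed the gap without any cancellation.
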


\noindent \textit{Proof.}
The definition of $\tilde s$ in \eqref{eq:defs} gives
\begin{align}
\label{eqn:1minust}
 \tilde s^2 \leq  \bigg|\frac{\sum_{i\neq j} \tilde\Sigma_{ij}^2 -\frac{1}{n/2}\tr(\tilde \Sigma)^2 }{\sum_{i\neq j} \tilde\Sigma_{ij}^2}\bigg|.
\end{align}
Using Lemmas~\ref{lem:sumhatsjj2} and~\ref{lem:hatSigmaerror}, the numerator on the right side of~\eqref{eqn:1minust} satisfies
\begin{equation*}
\begin{split}
  \sum_{i\neq j} \tilde\Sigma_{ij}^2 -\ts\frac{1}{n/2}\tr(\tilde\Sigma)^2  & \ = \ \Big(\|\tilde\Sigma\|_F^2-\ts\frac{1}{n/2}\tr(\tilde\Sigma)^2\Big) - \displaystyle\sum_{j=1}^p \tilde\Sigma_{jj}^2\\
& \ = \ \Big(1 + \mathcal{O}_{\P}(\ts\frac{1}{n})\Big)\|\Sigma\|_F^2 \ - \ \sum_{j=1}^p \Sigma_{jj}^2 +\mathcal{O}_{\P}(1)\\[0.2cm]
&= \Big(\sum_{i\neq j} \Sigma_{ij}^2\Big) +\mathcal{O}_{\P}(\ts\frac{1}{n})\|\Sigma\|_F^2  +\mathcal{O}_{\P}(1)\\
&=\sum_{i\neq j} A_{ij}^2+ \mathcal{O}_{\P}(1),
\end{split}
\end{equation*}
where the notation $\Sigma=I+A$ defined in \eqref{eqn:Deltadef} has been used in the last step, along with Assumption~\ref{A:model}. Similarly, the denominator on the right side of~\eqref{eqn:1minust} satisfies
\begin{align*}
  \sum_{i\neq j} \tilde\Sigma_{ij}^2   & \ = \ \sum_{i\neq j} A_{ij}^2\ + \  \mathcal{O}_{\P}(1) + \ \ts\frac{1}{n/2}\tr(\tilde\Sigma)^2.
\end{align*}
Also, it is straightforward to show that $\tr(\tilde\Sigma)/\tr(\Sigma)=1+o_{\P}(1)$ because $\tr(\tilde\Sigma)$ is unbiased for $\tr(\Sigma)$ and $\var(\tr(\tilde\Sigma))=\frac{1}{n}\var(\|X_1\|_2^2)\lesssim 1$ by~\eqref{eqn:varorder}. Finally, since  we have  $\tr(\Sigma)\gtrsim p$ and $\|A\|_F^2\leq \|A\|_{\textup{op}}\|A\|_*=o(\sqrt p)$ under Assumption~\ref{A:model}, the proof is complete.

\qed

\begin{lemma}
\label{lem:hatSigmaerror}
If Assumption \ref{A:model} holds, then 
\begin{align*}
\|\hat\Sigma\|_F^2 - \ts\frac{1}{n}\tr(\hat\Sigma)^2 = \Big(1 + \mathcal{O}_{\P}(\ts\frac{1}{n})\Big)\|\Sigma\|_F^2.
\end{align*}
In addition, the same statement holds when $\hat\Sigma$ is replaced with $\tilde\Sigma$.
\end{lemma}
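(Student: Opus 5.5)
We need to show that $\|\hat\Sigma\|_F^2-\frac1n\tr(\hat\Sigma)^2=\|\Sigma\|_F^2+\mathcal{O}_{\P}(\|\Sigma\|_F^2/n)$. Since $\|\Sigma\|_F^2\asymp p\asymp n$ under Assumption~\ref{A:model}, this is the same as showing that the difference is $\mathcal{O}_{\P}(1)$. The plan is a direct mean--variance calculation. Expanding $\hat\Sigma=\frac1n\sum_i X_iX_i\ttop$ gives
\begin{equation*}
\|\hat\Sigma\|_F^2=\frac{1}{n^2}\sum_{i}\|X_i\|_2^4+\frac{1}{n^2}\sum_{i\neq l}(X_i\ttop X_l)^2,
\qquad
\tr(\hat\Sigma)^2=\frac{1}{n^2}\sum_i\|X_i\|_2^4+\frac{1}{n^2}\sum_{i\neq l}\|X_i\|_2^2\|X_l\|_2^2 .
\end{equation*}
The statistic is therefore a combination of a diagonal sum and an off-diagonal (U-statistic-type) sum. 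The argument has two steps: compute its expectation exactly, then bound its variance by $\mathcal{O}(1)$. Chebyshev's inequality then finishes the proof.

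\textbf{Step 1: the expectation.} We use $\E(X_1\ttop X_2)^2=\|\Sigma\|_F^2$, $\E\|X_1\|_2^4=\tr(\Sigma)^2+\var(\|X_1\|_2^2)$, and $\E(\|X_1\|_2^2\|X_2\|_2^2)=\tr(\Sigma)^2$. These give
\begin{equation*}
\E\big(\|\hat\Sigma\|_F^2-\tfrac1n\tr(\hat\Sigma)^2\big)=\tfrac{n-1}{n}\|\Sigma\|_F^2+\tfrac{1}{n}\big(1-\tfrac1n\big)\E\|X_1\|_2^4-\tfrac{n-1}{n^2}\tr(\Sigma)^2 .
\end{equation*}
The $\tr(\Sigma)^2$ contributions cancel, because $\tfrac1n(1-\tfrac1n)=\tfrac{n-1}{n^2}$. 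What remains is $\|\Sigma\|_F^2+\mathcal{O}(n^{-1})(\|\Sigma\|_F^2+\var(\|X_1\|_2^2))$. By \eqref{eqn:varorder} we have $\var(\|X_1\|_2^2)\asymp\|\Sigma\|_F^2$, so the bias is $\mathcal{O}(\|\Sigma\|_F^2/n)=\mathcal{O}(1)$. This cancellation is exactly why the correction term $\frac1n\tr(\hat\Sigma)^2$ is used.

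\textbf{Step 2: centering.} We set $Y_i=\|X_i\|_2^2-\tr(\Sigma)$ and rewrite
\begin{equation*}
\frac1{n^2}\sum_{i\neq l}\|X_i\|_2^2\|X_l\|_2^2=\frac{n-1}{n}\tr(\Sigma)^2+\frac{2(n-1)}{n^2}\tr(\Sigma)\sum_i Y_i+\frac1{n^2}\sum_{i\neq l}Y_iY_l .
\end{equation*}
Multiplying by $1/n$, each piece has controlled variance:
\begin{itemize}
\item The linear term $\frac{2(n-1)}{n^3}\tr(\Sigma)\sum_i Y_i$ has variance $\lesssim n^{-4}p^2\, n\,\|\Sigma\|_F^2=\mathcal{O}(1)$.
\item The degenerate term $n^{-3}\sum_{i\ne l}Y_iY_l$ has variance $\lesssim n^{-6}n^2\|\Sigma\|_F^4=o(1)$.
\item The diagonal term $n^{-2}\sum_i\|X_i\|_2^4(1-\frac1n)$ has variance $\lesssim n^{-3}\var(\|X_1\|_2^4)$. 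Since $\var(\|X_1\|_2^4)\lesssim \tr(\Sigma)^2\var(\|X_1\|_2^2)+\E Y_1^4\lesssim p^3$ by Lemma~\ref{lem:Bai,quadra}, this term is $\mathcal{O}(1)$.
\end{itemize}

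\textbf{Step 3: the main U-statistic.} This is the step I expect to be the main obstacle: the off-diagonal term $n^{-2}\sum_{i\ne l}(X_i\ttop X_l)^2$. I would take its Hoeffding decomposition with kernel $g(x,y)=(x\ttop y)^2$.
\begin{itemize}
\item The projection is $g_1(x)=x\ttop\Sigma x-\|\Sigma\|_F^2$, which has variance $\lesssim\tr(\Sigma^4)\lesssim p$ by Lemma~\ref{lemma:covZMZ}. It contributes variance $\lesssim n^{-1}\cdot p=\mathcal{O}(1)$.
\item The degenerate part contributes variance $\lesssim n^{-2}\E(X_1\ttop X_2)^4\lesssim n^{-2}\|\Sigma\|_F^4=\mathcal{O}(1)$, by Lemma~\ref{lemma: EZ1MZ2^4} with $q=4$.
\end{itemize}
The cross-covariances among the pieces are handled by Cauchy--Schwarz. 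The total variance is therefore $\mathcal{O}(1)$, so the statistic minus its mean is $\mathcal{O}_{\P}(1)=\mathcal{O}_{\P}(\|\Sigma\|_F^2/n)$. Combined with Step 1, this gives the claim.

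\textbf{Replacing $\hat\Sigma$ by $\tilde\Sigma$.} The same argument applies with $n$ replaced by $n/2$; only constants change.
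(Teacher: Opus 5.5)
Your proof is correct. Step~3, the Hoeffding decomposition of $n^{-2}\sum_{i\neq l}(X_i\ttop X_l)^2$, is exactly what the paper does. The difference is in how you handle the diagonal terms and the correction $\frac1n\tr(\hat\Sigma)^2$. The paper uses the identity
\begin{equation*}
\|\hat\Sigma\|_F^2-\tfrac1n\tr(\hat\Sigma)^2=\frac{1}{n^2}\sum_{i\neq j}(X_i\ttop X_j)^2+\frac{1}{n^3}\sum_{i>j}\big(\|X_i\|_2^2-\|X_j\|_2^2\big)^2 .
\end{equation*}
The second piece is nonnegative with mean $\frac{n-1}{n^2}\var(\|X_1\|_2^2)\lesssim\|\Sigma\|_F^2/n$, so Markov's inequality disposes of it.

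This identity builds in both cancellations you handle by hand. The first is the $\tr(\Sigma)^2$ cancellation in your Step~1. The second is that the parts linear in $Y_i$ of your diagonal term and of your linear term each have variance of order one, yet they cancel exactly. Both have coefficient $\pm\frac{2(n-1)\tr(\Sigma)}{n^3}\sum_i Y_i$. Bounding them separately is wasteful but harmless, since the target is only $\mathcal{O}_{\P}(1)$.

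The cost of your route is in moments. Bounding $\var(\|X_1\|_2^4)$ needs $\E Y_1^4\lesssim p^2$, which means Lemma~\ref{lem:Bai,quadra} with $q=4$ and eighth moments of $Z_{11}$. The paper's argument for this lemma gets by with fourth moments. Both are fine under Assumption~\ref{A:model}. In exchange, your computation produces the exact mean $\frac{n-1}{n}\|\Sigma\|_F^2+\frac{n-1}{n^2}\var(\|X_1\|_2^2)$ explicitly, which the paper's argument also yields but leaves implicit.
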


\noindent \textit{Proof.} First observe that
\begin{align*}
\|\hat\Sigma\|_F^2 & = \frac{1}{ n^2} \sum_{i=1}^n \sum_{j=1}^n (X_i^{\top} X_j)^2 \\
\frac{1}{n}\tr(\hat\Sigma)^2  & = \frac{1}{n^3} \sum_{i=1}^n \sum_{j=1}^n (X_i^{\top} X_i)(X_j^{\top} X_j). 
\end{align*}
Consequently, it can be checked that
\begin{align}\label{eqn:twoparts}
    \|\hat\Sigma\|_F^2 - \frac{1}{n}\tr(\hat\Sigma)^2 \ = \ \frac{1}{ n^2} \sum_{i\neq j} (X_i^{\top} X_j)^2   \ + \  \frac{1}{ n^3} \sum_{i > j}(X_i^{\top}X_i - X_j^{\top} X_j)^2.
\end{align}
The second sum on the right side is non-negative, and so we may show that it is $\mathcal{O}_{\P}(\frac{1}{n}\|\Sigma\|_F^2)$ by showing that its expectation is $\mathcal{O}(\frac{1}{n}\|\Sigma\|_F^2)$. In particular, we have
\begin{align*}
\frac{1}{ n^3} \sum_{i > j}\E\Big((X_i^{\top}X_i - X_j^{\top} X_j)^2\Big) & = \frac{1}{ n^3} \sum_{i > j}2 \var(X_i^{\top} X_i)\\
& \lesssim \frac{\|\Sigma\|_F^2}{n},
\end{align*}
where  \eqref{eqn:varorder} has been used in the last step.

Now we turn to the first sum on the right side of~\eqref{eqn:twoparts}. If this sum is viewed as an estimate of $\|\Sigma\|_F^2$, then to complete the proof, it is enough to show that the bias and standard deviation are both $\mathcal{O}(\frac{1}{n}\|\Sigma\|_F^2)$. A straightforward calculation shows that its mean is given by
$$\frac{1}{n^2}\sum_{i\neq j}\E((X_i\ttop X_j)^2)=(1-\ts\frac{1}{n})\|\Sigma\|_F^2 $$
and hence the bias is $\mathcal{O}(\frac{1}{n}\|\Sigma\|_F^2)$. 
Also, since $\frac{1}{n^2}\sum_{i\neq j}(X_i\ttop X_j)^2$ is proportional to a U statistic, the classical formula for the variance of a U statistic~\citep[p.163]{van2000asymptotic} gives
\begin{align*}
    \var\bigg(\frac{1}{n^2}\sum_{i\neq j}(X_i\ttop X_j)^2\bigg) &  \lesssim    \frac{1}{n}\var\big(\E((X_1^{\top} X_2)^2|X_1)\big) \, +\, \frac{1}{n^2}\var((X_1^{\top} X_2)^2)\\
    &\lesssim \frac{1}{n}\var(X_1\ttop \Sigma X_1)+\frac{1}{n^2}\E((X_1\ttop X_2)^4)\\[0.2cm]
    &\lesssim  \frac{\|\Sigma^2\|_F^2}{n}+ \frac{\|\Sigma\|_F^4}{n^2}\\
    &\lesssim 1,
\end{align*}
where Lemmas~\ref{lemma: EZ1MZ2^4} and \ref{lemma:covZMZ} have been used in the third step, and Assumption~\ref{A:model} has been used in the fourth step. Likewise, since Assumption~\ref{A:model} implies $\frac{1}{n}\|\Sigma\|_F^2\gtrsim 1$, it follows that the standard deviation of $\frac{1}{n^2}\sum_{i\neq j}(X_i\ttop X_j)^2$ is indeed $\mathcal{O}(\frac{1}{n}\|\Sigma\|_F^2)$.

\qed

\begin{lemma} 
\label{lem:cov op bounds} If Assumption~\ref{A:model} holds, then as $n\to\infty$,
\begin{align*}
\|\hat\Sigma\|_{\textup{op}} =\mathcal{O}_{\P}(1).
\end{align*}
In addition, the same result holds when $\hat\Sigma$ is replaced with $\tilde\Sigma$.
\end{lemma}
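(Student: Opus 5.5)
Write $\hat\Sigma=\Sigma^{1/2}\big(\frac1n ZZ\ttop\big)\Sigma^{1/2}$, where $Z=[Z_1,\dots,Z_n]\in\R^{p\times n}$. This gives
\[
\|\hat\Sigma\|_{\textup{op}}\le \|\Sigma\|_{\textup{op}}\,\big\|\tfrac1n ZZ\ttop\big\|_{\textup{op}},
\]
and $\|\Sigma\|_{\textup{op}}\lesssim 1$ by Assumption~\ref{A:model}(c). It therefore suffices to show that $\|\frac1n ZZ\ttop\|_{\textup{op}}=\mathcal{O}_{\P}(1)$, i.e.\ that the largest eigenvalue of the white sample covariance matrix is bounded in probability.

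For this I would invoke the classical Bai--Yin / Yin--Bai--Krishnaiah theorem. The entries $Z_{ij}$ are taken from a doubly infinite i.i.d.\ array with mean zero, unit variance and finite fourth moment; the latter is implied by the $(8+\delta)$-moment condition in Assumption~\ref{A:model}(a). Together with $p/n\to\gamma\in(0,\infty)$, the theorem yields
\[
\lambda_{\max}\big(\tfrac1n ZZ\ttop\big)\to(1+\sqrt\gamma)^2
\]
almost surely. Almost sure convergence to a finite constant gives the $\mathcal{O}_{\P}(1)$ bound immediately. The array structure in Assumption~\ref{A:model}(a) is exactly the setting in which the theorem is stated, so this step is just a citation.

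If one prefers to avoid citing the sharp result, a cruder self-contained route is also available. One would bound $\E\,\tr\big((\frac1nZZ\ttop)^k\big)$ for a slowly growing $k$ after truncating the entries at level $n^{1/2}\epsilon_n$; the $(8+\delta)$ moments make the truncation error negligible. Then $\E\lambda_{\max}^k\le \E\tr(\cdot)^k\le p\,C^k$, and Markov's inequality gives boundedness. The only delicate point in that route is the combinatorial moment count; citing Bai--Yin avoids it entirely.

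The statement for $\tilde\Sigma$ follows identically. Indeed, $\tilde\Sigma=\Sigma^{1/2}\big(\frac{1}{n/2}\tilde Z\tilde Z\ttop\big)\Sigma^{1/2}$, where $\tilde Z$ is the $p\times n/2$ block of columns with index greater than $n/2$. This block again has i.i.d.\ entries, and $p/(n/2)\to2\gamma$, so the same theorem applies and the limit is $(1+\sqrt{2\gamma})^2$. The case of $\check\Sigma$ is analogous.

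I expect no real obstacle. The only care needed is to note that the theorem applies to each half-sample, which is fine since the column subarray of an i.i.d.\ array is again i.i.d.
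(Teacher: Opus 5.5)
Your proposal is correct and matches the paper's proof. The paper also writes $\hat\Sigma=\Sigma^{1/2}\big(\frac{1}{n}\sum_{i=1}^n Z_iZ_i\ttop\big)\Sigma^{1/2}$ and cites Theorem 3.1 of Yin, Bai and Krishnaiah (1988) to get $\|\frac{1}{n}\sum_{i=1}^n Z_iZ_i\ttop\|_{\textup{op}}=\mathcal{O}_{\P}(1)$. It then concludes via $\|\Sigma\|_{\textup{op}}\lesssim 1$ and submultiplicativity of the operator norm. Your note that the half-sample has aspect ratio $p/(n/2)\to 2\gamma$ spells out the detail the paper leaves implicit for $\tilde\Sigma$.
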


\noindent \textit{Proof.} 
Due to Assumption~\ref{A:model}, we have 
$$\hat\Sigma =   \Sigma^{1/2} \Big(\frac{1}{n}\sum_{i=1}^n Z_i Z_i^{\top}\Big) \Sigma^{1/2}.$$
It is known from~\citep[Theorem 3.1]{yin1988limit}  that the bound
\begin{align*}
 \Big\|\frac{1}{n} \sum_{i=1}^n Z_i Z_i^{\top}\Big\|_{\textup{op}}=\mathcal{O}_{\P}(1)
\end{align*}
holds under Assumption~\ref{A:model}, and so the lemma follows from $\|\Sigma\|_{\textup{op}}\lesssim 1$ and
the submultiplicative property of the operator norm.

\qed

\begin{lemma}
\label{lem:phatSS}
If Assumption \ref{A:model} holds, then for any fixed $\varepsilon\in(0,1)$, we have 
\begin{align}
\label{eqn:sumprobbound}
\sum_{j=1}^p \P\Big(\Big|\ts\frac{\hat\Sigma_{jj}}{\Sigma_{jj}} - 1\Big| > n^{\frac{-1+\varepsilon}{4}} \Big) \, \lesssim \, n^{-\varepsilon}.
\end{align}
Furthermore, there is a constant $c>0$ not depending on $n$ such that
\begin{equation}\label{eqn:mincovbound}
 \min_{1\leq j\leq p} \hat\Sigma_{jj} \, \geq c - o_{\P}(1).
\end{equation}
Lastly, both of the previous statements hold when $\hat\Sigma$ is replaced with $\tilde\Sigma$.
\end{lemma}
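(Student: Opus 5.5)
The plan is a union bound via moment (Markov) inequalities applied coordinatewise, followed by deducing the minimum bound from it. For each $j$, write
$$\hat\Sigma_{jj}/\Sigma_{jj}-1=\frac{1}{n}\sum_{i=1}^n W_{ij},\qquad W_{ij}=X_{ij}^2/\Sigma_{jj}-1.$$
For fixed $j$ the $W_{ij}$ are i.i.d.\ with mean zero. Since $\Sigma_{jj}\gtrsim 1$ by Assumption~\ref{A:model}(c), we have $\|W_{1j}\|_{L^4}\lesssim 1+\|X_{1j}\|_{L^8}^2$. The bound $\|X_{1j}\|_{L^8}\lesssim 1$ holds uniformly in $j$: it follows from Rosenthal's inequality (Lemma~\ref{lem:Rosenthal}) applied to $X_{1j}=\sum_l(\Sigma^{1/2})_{jl}Z_{1l}$, exactly as in~\eqref{eqn: rosendeltaj}, using $\sum_l(\Sigma^{1/2})_{jl}^2=\Sigma_{jj}\lesssim 1$ and $\E|Z_{11}|^8\lesssim1$. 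Applying Rosenthal's inequality once more, now to the average over $i$, gives
$$\E\Big|\frac1n\sum_{i}W_{ij}\Big|^4\lesssim \max\{n^{-2}\|W_{1j}\|_{L^2}^4,\;n^{-3}\|W_{1j}\|_{L^4}^4\}\lesssim n^{-2},$$
with the implicit constant uniform in $j$.

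Markov's inequality with fourth moments then gives
$$\P\big(|\hat\Sigma_{jj}/\Sigma_{jj}-1|>n^{(-1+\varepsilon)/4}\big)\lesssim n^{-2}\cdot n^{1-\varepsilon}=n^{-1-\varepsilon}.$$
Summing over $j\le p\asymp n$ (Assumption~\ref{A:model}(b)) yields~\eqref{eqn:sumprobbound}. The only delicate point is that the fourth moment of a single $X_{1j}^2$ requires an eighth moment of $Z_{11}$. This is available from Assumption~\ref{A:model}(a), so no truncation is needed. If one wanted a sharper exponent one would use higher moments, but the fourth-moment version already suffices for the stated rate.

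For~\eqref{eqn:mincovbound}, the union bound shows that with probability at least $1-O(n^{-\varepsilon})$ we have $\hat\Sigma_{jj}\ge\Sigma_{jj}(1-n^{(-1+\varepsilon)/4})$ for all $j$ simultaneously. Since $\Sigma_{jj}=e_j\ttop\Sigma e_j\ge\lambda_{\min}(\Sigma)\ge c$ for some constant $c>0$, it follows that $\min_j\hat\Sigma_{jj}\ge c-o_{\P}(1)$.

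For $\tilde\Sigma$ (and likewise $\check\Sigma$), the same argument applies verbatim with $n$ replaced by $n/2$. This only changes the constants, since $p/(n/2)\to2\gamma$.
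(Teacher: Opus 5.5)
Your proposal is correct and follows essentially the same route as the paper: a fourth-moment Markov bound, Rosenthal's inequality for the average over $i$, a sum over $j\le p\asymp n$ giving $p\,n^{-1-\varepsilon}\lesssim n^{-\varepsilon}$, and then the minimum bound from the uniform ratio control together with $\min_j\Sigma_{jj}\gtrsim 1$. The only difference is cosmetic: you bound $\|X_{1j}^2/\Sigma_{jj}-1\|_{L^4}$ by applying Rosenthal to the linear form $X_{1j}$, whereas the paper normalizes to $\Sigma_{jj}=1$ and applies the quadratic-form moment bound (Lemma~\ref{lem:Bai,quadra}) to $Z_1^{\top}\Sigma^{1/2}e_je_j^{\top}\Sigma^{1/2}Z_1$.
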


\noindent \textit{Proof.} The bound~\eqref{eqn:mincovbound} holds  because~\eqref{eqn:sumprobbound} implies 
$$\max_{1\leq j\leq p}\Big|\ts\frac{\hat\Sigma_{jj}}{\Sigma_{jj}}-1\Big|=o_{\P}(1),$$ 
and because Assumption~\ref{A:model}  implies $\min_{1\leq j\leq p}\Sigma_{jj}\gtrsim 1$. To prove~\eqref{eqn:sumprobbound}, we may assume without loss of generality that $\Sigma_{jj}=1$ for $j = 1,\ldots,p$. Using Chebyshev's inequality, we have
\begin{align*}
\P(|\hat\Sigma_{jj} - 1| > n^{\frac{-1+\varepsilon}{4}}) \leq n^{1-\varepsilon} \bigg\| \frac{1}{n} \sum_{i=1}^{n} (X_{ij}^2-1)  \bigg\|_{L^4}^4,
\end{align*}
and the right side can be bounded using Rosenthal's inequality (Lemma \ref{lem:Rosenthal}) to obtain
\begin{align*}
\P(|\hat \Sigma_{jj} - 1| > n^{\frac{-1+\varepsilon}{4}}) \lesssim n^{1-\varepsilon} \max\Big\{ \big(\ts\frac{1}{n}\var(X_{1j}^2)\big)^2, \big(\ts\frac{1}{n}\big)^3 \|X_{1j}^2-1\|_{L^4}^4 \Big\}.
\end{align*}
Since $X_{1j}^2 = Z_{1}^{\top} \Sigma^{1/2} e_j e_j^{\top} \Sigma^{1/2} Z_1$, it follows from Lemmas~\ref{lemma:covZMZ} and \ref{lem:Bai,quadra} as well as  the condition $\Sigma_{jj}=1$ that 
\begin{align*}
 \var(X_{1j}^2) & \lesssim \|\Sigma^{1/2} e_j e_j^{\top} \Sigma^{1/2}\|_F^2 = 1,\\
 \|X_{1j}^2 - 1\|_{L^4}^4 & \lesssim \|\Sigma^{1/2} e_j e_j^{\top} \Sigma^{1/2}\|_F^4=1.
\end{align*}
Hence, we obtain
\begin{align*}
\sum_{j=1}^p \P(|\hat \Sigma_{jj} - 1| > n^{\frac{-1+\varepsilon}{4}}) & \lesssim p n^{-1-\varepsilon},
\end{align*}
which leads to the stated result.

\qed

\subsection{Ratio consistency of the variance estimate}
\label{sec:sigma,cons}

\begin{proposition}
\label{prop:ratioconsit}
If Assumption \ref{A:model}  holds, then as $n \rightarrow \infty$,
\begin{align*}
\frac{\hat \sigma_n^2}{\sigma_n^2} \overset{\P}{\rightarrow} 1.
\end{align*}    
\end{proposition}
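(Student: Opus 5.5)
We write $\hat\sigma_n^2 = T_1 + T_2$, where
\[
T_1=\frac{4(\hat\var(\|X_1\|_2^2))^2}{n(\sum_j\hat\Sigma_{jj}^2)^2}, \qquad
T_2=\frac{2\hat\var(\|X_1\|_4^4)}{n\|\hat{\mathfrak{S}}^{1/2}\|_4^8}.
\]
The plan is to prove $T_1/\sigma_n^2\xrightarrow{\P}1$ and $T_2/\sigma_n^2\xrightarrow{\P}0$. By~\eqref{eqn:sigmaorder}, $\sigma_n^2\asymp 1/n$, so for the second claim it suffices to show $T_2=o_{\P}(1/n)$.

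\textbf{The term $T_2$.} The versions of Lemmas~\ref{lem:l4diagD}, \ref{lem:offdiag4thpowers} and~\ref{lem:l4normconsistency} with $\tilde{\mathfrak{S}}$ replaced by $\hat{\mathfrak{S}}$ follow by the same proofs, since they used only Lemmas~\ref{lem:sumhatsjj2}, \ref{lem:hatSigmaerror}, \ref{lem:cov op bounds} and~\ref{lem:phatSS}, all of which are stated for $\hat\Sigma$. These give $\|\hat{\mathfrak{S}}^{1/2}\|_4^4=p(1+o_{\P}(1))$, so $\|\hat{\mathfrak{S}}^{1/2}\|_4^8\asymp_{\P}p^2$. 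Next, $\hat\var(\|X_1\|_4^4)\ge0$ and $\E\,\hat\var(\|X_1\|_4^4)=\var(\|X_1\|_4^4)=o(p^{3/2})$ by Lemma~\ref{lem:centralXl4}, so Markov's inequality gives $\hat\var(\|X_1\|_4^4)=o_{\P}(p^{3/2})$. Therefore $T_2=o_{\P}(p^{3/2}/(np^2))=o_{\P}(1/n)$.

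\textbf{The term $T_1$.} Lemma~\ref{lem:sumhatsjj2} gives $\sum_j\hat\Sigma_{jj}^2/\sum_j\Sigma_{jj}^2\xrightarrow{\P}1$. It then remains to prove
\[
\hat\var(\|X_1\|_2^2)/\var(\|X_1\|_2^2)\xrightarrow{\P}1 .
\]
Write $W_i=\|X_i\|_2^2-\tr(\Sigma)$ and $v=\var(\|X_1\|_2^2)\asymp p$, the latter by~\eqref{eqn:varorder}. Then
\[
\hat\var(\|X_1\|_2^2)=\tfrac{n}{n-1}\Big(\tfrac1n\textstyle\sum_i W_i^2-\bar W^2\Big)
\]
(or without the factor $\tfrac{n}{n-1}$, depending on the convention; this does not affect the limit). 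We have $\E\bar W^2=v/n$, so $\bar W^2/v=\mathcal{O}_{\P}(1/n)$. For the main part, $\tfrac1n\sum_iW_i^2$ is an average of i.i.d. terms with mean $v$, and
\[
\var\Big(\tfrac1n\textstyle\sum_iW_i^2\Big)=\tfrac1n\var(W_1^2)\le\tfrac1n\E W_1^4\lesssim v^2/n,
\]
where the last step uses the moment bound~\eqref{eqn:unifintbound}, or directly Lemma~\ref{lem:varnormsq}, which gives $\var(W_1^2)=(2+o(1))v^2$. Chebyshev's inequality then gives $\tfrac1n\sum_iW_i^2/v\xrightarrow{\P}1$.

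Combining the two parts, $T_1/\sigma_n^2\xrightarrow{\P}1$ and $T_2/\sigma_n^2\xrightarrow{\P}0$, so $\hat\sigma_n^2/\sigma_n^2\xrightarrow{\P}1$.

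\textbf{Main obstacle.} The only step that is not routine is transferring the $\ell_4$-norm consistency from $\tilde{\mathfrak{S}}$ to $\hat{\mathfrak{S}}$. Even there, only the cruder statement $\|\hat{\mathfrak{S}}^{1/2}\|_4^4\gtrsim_{\P}p$ is actually needed. If re-running Lemmas~\ref{lem:offdiag4thpowers} and~\ref{lem:l4normconsistency} for $\hat\Sigma$ is unwanted, a direct lower bound should suffice:
\[
\|\hat{\mathfrak{S}}^{1/2}\|_4^4\ge\sum_j(\hat{\mathfrak{S}}^{1/2})_{jj}^4\ge p^{-1}\Big(\textstyle\sum_j(\hat{\mathfrak{S}}^{1/2})_{jj}^2\Big)^2=p^{-1}\tr(\hat{\mathfrak{S}})^2=p^{-1}\tr(\hat\Sigma)^2\gtrsim_{\P}p .
\]
The second inequality is Cauchy–Schwarz, and the equality $\sum_j(\hat{\mathfrak{S}}^{1/2})_{jj}^2=\tr(\hat{\mathfrak{S}})$ is false in general, since only $\sum_{j,k}(\hat{\mathfrak{S}}^{1/2})_{jk}^2=\tr(\hat{\mathfrak{S}})$ holds. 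I would therefore rely on the diagonal expansion in Lemma~\ref{lem:l4diagD} for $\hat{\mathfrak{S}}$, which that lemma already asserts, to get $\|\mathrm{Diag}(\hat{\mathfrak{S}}^{1/2})\|_4^4=p+o_{\P}(\sqrt n)$. This alone yields the needed lower bound $\|\hat{\mathfrak{S}}^{1/2}\|_4^4\gtrsim_{\P}p$.
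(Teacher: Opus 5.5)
Your proposal is correct and follows the paper's proof. The match is on every main step: the same split of $\hat\sigma_n^2/\sigma_n^2$; the same Markov-plus-Lemma~\ref{lem:centralXl4} argument for the $\|X_1\|_4^4$ term, combined with the lower bound $\|\hat{\mathfrak{S}}^{1/2}\|_4^4\geq p+o_{\P}(\sqrt n)$ from Lemma~\ref{lem:l4diagD} (which you rightly fall back on after discarding the false Cauchy--Schwarz chain); and Lemma~\ref{lem:sumhatsjj2} for the diagonal ratio. The only difference is that you prove $\hat\var(\|X_1\|_2^2)/\var(\|X_1\|_2^2)\xrightarrow{\P}1$ directly by Chebyshev using the fourth-moment bound~\eqref{eqn:unifintbound}, whereas the paper cites Lemma~S3 of~\cite{lopes2019bootstrapping}; your version is self-contained.
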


\noindent \textit{Proof.} From the definition of $\hat\sigma_n^2$ in~\eqref{eqn:hatsigmadef}, we have
\begin{equation}\label{eqn:varconsistency}
 \frac{\hat\sigma_n^2}{\sigma_n^2}= \frac{4(\hat{\var}(\|X_1\|_2^2))^2}{n\sigma_n^2(\sum_{j=1}^p \hat \Sigma_{jj}^2)^2} + \frac{ 2 \hat \var(\|X_1\|_4^4)}{n\sigma_n^2 {\|\hat{\mathfrak{S}}^{1/2}\|_4^8}}.
 \end{equation}
Since we know $\sigma_n^2\asymp 1/n$ from~\eqref{eqn:sigmaorder}, and $\|\hat{\mathfrak{S}}^{1/2}\|_4^4\geq p+o_{\P}(\sqrt n)$ from Lemma~\ref{lem:l4diagD}, we can show that the second term on the right side of~\eqref{eqn:varconsistency} is negligible by checking that $\hat{\var}(\|X_1\|_4^4)=o_{\P}(p^2)$. Indeed, since $\hat{\var}(\|X_1\|_4^4)$ is a non-negative random variable, the negligibility will follow if we can check that $\E(\hat{\textup{var}}(\|X_1\|_4^4))=o(p^2)$. To this end, the unbiasedness of $\hat{\var}(\|X_1\|_4^4)$  and Lemma~\ref{lem:centralXl4} imply $\E(\hat{\textup{var}}(\|X_1\|_4^4))=\var(\|X_1\|_4^4)=o( p^{3/2})$.

It remains to show that the first term on the right side of~\eqref{eqn:varconsistency} converges to 1 in probability. From the definition of~$\sigma_n^2$ in~\eqref{eqn:sigmandef} this term may be expressed as
\begin{equation*}
    \frac{4(\hat{\var}(\|X_1\|_2^2))^2}{n\sigma_n^2(\sum_{j=1}^p \hat \Sigma_{jj}^2)^2} = \bigg(\frac{\hat{\var}(\|X_1\|_2^2)}{\var(\|X_1\|_2^2)}\bigg)^2\bigg(\frac{\sum_{j=1}^p\Sigma_{jj}^2}{\sum_{j=1}^p\hat\Sigma_{jj}^2}\bigg)^2.
\end{equation*}
Lemma~\ref{lem:sumhatsjj2} shows that the second factor on the right is $1+o_{\P}(1)$, while Lemma S3 in~\citep{lopes2019bootstrapping} shows that the first factor on the right is $1+o_{\P}(1)$.

\qed

\section{Background results}\label{sec:app,background}

\begin{lemma}[Rosenthal's inequality~\citep{Rosenthal}]
\label{lem:Rosenthal}
Let $q \geq 2$, and let $\xi_1, \ldots, \xi_n$ be independent centered random variables. Then, there is an absolute constant $c>0$ such that 
$$
\Big\|\sum_{i=1}^n \xi_i \Big\|_{L^q} 
\leq c\cdot q \cdot \max \bigg\{\Big\|\sum_{i=1}^n \xi_i\Big\|_{L^2},\Big(\sum_{i=1}^n \big\|\xi_i \big\|_{L^q}^q\Big)^{1 / q}\bigg\}.
$$
\end{lemma}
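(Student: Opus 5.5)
The plan is to reduce to sums of nonnegative variables by symmetrization and Khintchine's inequality, and then to prove a sharp moment bound for nonnegative sums directly. Write $S_n=\sum_{i=1}^n\xi_i$ and let $\varepsilon_1,\dots,\varepsilon_n$ be i.i.d.\ Rademacher signs independent of the $\xi_i$.

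\textbf{Step 1: symmetrization.} The standard symmetrization inequality for centered independent summands gives $\|S_n\|_{L^q}\le 2\|\sum_i\varepsilon_i\xi_i\|_{L^q}$. Conditionally on $(\xi_i)$, I apply Khintchine's inequality, whose constant grows like $\sqrt q$. This yields
$$\Big\|\sum_i\varepsilon_i\xi_i\Big\|_{L^q}\le C\sqrt q\,\big\|Y\big\|_{L^{q/2}}^{1/2},\qquad Y:=\sum_{i=1}^n Y_i,\quad Y_i:=\xi_i^2\ge 0.$$
Set $r=q/2\ge1$. It then suffices to show
\begin{equation*}
\|Y\|_{L^r}\le C\Big(\E Y + r\Big(\sum_i\|Y_i\|_{L^r}^r\Big)^{1/r}\Big). \tag{$\ast$}
\end{equation*}
Indeed, $\E Y=\|S_n\|_{L^2}^2$ and $(\sum_i\|Y_i\|_{L^r}^r)^{1/r}=(\sum_i\|\xi_i\|_{L^q}^q)^{2/q}$. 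Taking square roots in $(\ast)$ and multiplying by $C\sqrt q$ therefore gives
$$\|S_n\|_{L^q}\lesssim \sqrt q\,\|S_n\|_{L^2}+ q\Big(\sum_i\|\xi_i\|_{L^q}^q\Big)^{1/q},$$
which is at most $c\,q$ times the maximum, since $q\ge2$.

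\textbf{Step 2: the nonnegative bound $(\ast)$.} Let $M=\|Y\|_{L^r}$, $a=\E Y$, and $b=(\sum_i\E Y_i^r)^{1/r}$. If $r\le 2$, then $\E Y^r\le(\E Y^2)^{r/2}$, so one could reduce to $r=2$, where a direct variance computation works. For general $r$ I argue as follows. Write $Y^r=\sum_i Y_iY^{r-1}$ and $Y=Y_{-i}+Y_i$ with $Y_{-i}$ independent of $Y_i$. Use the convexity inequality
$$(x+y)^{r-1}\le(1+\tfrac1r)^{r-2}x^{r-1}+(1+r)^{r-2}y^{r-1}$$
(for $r-1\ge1$; for $r<2$ simply use subadditivity). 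Together with independence and $Y_{-i}\le Y$, this gives
$$M^r\le e\,a\,M^{r-1}+(1+r)^{r-2}b^r.$$
Now split into cases. Either $e\,aM^{r-1}\ge M^r/2$, which gives $M\le 2e\,a$. Or $M^r\le 2(1+r)^{r-2}b^r$, which gives $M\le 2(1+r)b\lesssim r\,b$. In both cases $(\ast)$ follows.

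\textbf{Main obstacle.} The delicate point is Step 2. A naive splitting $(x+y)^{r-1}\le2^{r-2}(x^{r-1}+y^{r-1})$ produces constants exponential in $r$ on the $a$ term, which would ruin the linear-in-$q$ dependence. The choice of the weight $\varepsilon=1/r$ in the convexity inequality is exactly what keeps the coefficient of $a$ bounded, by $e$, while pushing all the growth into the factor $r$ on $b$.

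After that, the rest is bookkeeping. The Khintchine constant $\sqrt q$ multiplied by $\sqrt r\sim\sqrt q$ produces the single factor $q$ on the heavy-tail term, and $\sqrt q\le q$ takes care of the Gaussian term.
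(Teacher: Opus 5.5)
Your proof is correct. It necessarily takes a different route from the paper, because the paper does not prove this lemma at all: it imports it as a background result with a citation to Rosenthal.

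You instead give a self-contained derivation. Symmetrization and Khintchine's inequality (with constant of order $\sqrt q$) reduce the claim to the moment bound $(\ast)$ for the nonnegative sum $Y=\sum_i \xi_i^2$. Then $(\ast)$ follows from the self-bounding recursion $M^r\le e\,a\,M^{r-1}+(1+r)^{r-2}b^r$, where the weight $1/r$ in the convexity inequality keeps the coefficient of $\E Y$ bounded.

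The details check out:
\begin{itemize}
\item independence of $Y_i$ and $Y_{-i}$;
\item the chain $\E Y_{-i}^{r-1}\le \E Y^{r-1}\le M^{r-1}$;
\item the subadditive branch for $1\le r<2$;
\item the final bookkeeping $\sqrt q\cdot\sqrt{r}\asymp q$.
\end{itemize}
Dividing by $M^{r-1}$ is legitimate, since $b<\infty$ forces $M<\infty$ by Minkowski, and the case $b=\infty$ is trivial.

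What your route buys is an explicit constant linear in $q$. That is exactly the form stated, and it is the form $(cq)^q$ the paper invokes in its bound on $\E(|X_1^{\top}X_2|^q)$. Linear growth is not sharp (the optimal order is known to be $q/\log q$), but it is all the statement claims. What the citation buys is brevity, and it costs nothing here: every application in the paper uses a fixed exponent ($q\le 8$ or $q\le 4+\delta/2$), so any $q$-dependent constant would suffice.

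One small correction. Your aside that for $r\le 2$ one could reduce to $r=2$ via $\E Y^r\le(\E Y^2)^{r/2}$ would not yield $(\ast)$. It produces $\E Y_i^2=\E\xi_i^4$ on the right instead of $\E Y_i^r=\E|\xi_i|^q$. This is harmless because you do not use it: the subadditivity branch, giving $M^r\le a M^{r-1}+b^r$, is what actually covers $1\le r<2$.
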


\begin{lemma}[\cite{bhansali2007convergence}, Theorem 2.1]
\label{lem: quadra normal} Let $Y_1, Y_2,\dots$ be i.i.d.~random variables with $\E(Y_1)=0$, $\var(Y_1)=1$, and $\E(Y_1^4)<\infty$. For each integer $p=1, 2,\dots$, let $M^{(p)}$ be a real symmetric $p\times p$ matrix, and let $Q_p=\sum_{1\leq i,j\leq p} M_{ij}^{(p)}Y_iY_j$. Under these conditions, if  $\|M^{(p)}\|_F/\|M^{(p)}\|_{\textup{op}}\to \infty$ as $p\to\infty$, then the following limit holds as $p\to\infty$,
\begin{align*}
\frac{Q_p - \E(Q_p)}{\sqrt{\var(Q_p)}} \xrightarrow{\mathcal{L}} N(0,1).
\end{align*}
\end{lemma}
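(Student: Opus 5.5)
The plan is to reduce to de Jong's central limit theorem for generalized quadratic forms (clean, degenerate $U$-type statistics), after a truncation step that handles the fact that only four moments of $Y_1$ are assumed. Write $m=\E(Y_1^4)-1$ and
\begin{equation*}
Q_p-\E(Q_p)=\sum_{i=1}^p M_{ii}^{(p)}(Y_i^2-1)+2\sum_{i<j}M_{ij}^{(p)}Y_iY_j=:\sum_{i\le j}w_{ij}(Y_i,Y_j).
\end{equation*}
This is a sum of kernels that are degenerate in each argument. Its variance is
\begin{equation*}
\var(Q_p)=m\sum_i (M_{ii}^{(p)})^2+2\sum_{i\neq j}(M_{ij}^{(p)})^2 ,
\end{equation*}
which satisfies $\var(Q_p)\asymp\|M^{(p)}\|_F^2$ whenever $m>0$. (If $m=0$, then $Y_1^2=1$ a.s. and the diagonal part is identically zero, so only the off-diagonal part remains.)

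De Jong's theorem asserts asymptotic normality under two conditions.
\begin{itemize}
\item[(i)] The maximal influence $\max_i\sum_j \var(w_{ij})/\var(Q_p)$ tends to $0$.
\item[(ii)] The normalized fourth moment satisfies $\E\big((Q_p-\E Q_p)^4\big)/\var^2(Q_p)\to 3$.
\end{itemize}
Condition (i) follows directly from the hypothesis. Indeed, $\sum_j (M_{ij}^{(p)})^2=\|M^{(p)}e_i\|_2^2\le\|M^{(p)}\|_{\op}^2$, which is $o(\|M^{(p)}\|_F^2)$.

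Condition (ii) requires finite eighth moments of $Y_1$ for the diagonal terms, since $\E((Y_i^2-1)^4)$ appears. So I first truncate. Set $\bar Y_i=Y_i\mathbf 1\{|Y_i|\le\tau_p\}$, recentred and rescaled to have mean $0$ and variance $1$, where $\tau_p\to\infty$ is to be chosen. The truncated quadratic form $\bar Q_p$ differs from $Q_p$ in $L^2$ by
\begin{equation*}
\lesssim \|M^{(p)}\|_F^2\Big(\E\big(Y_1^4\mathbf 1\{|Y_1|>\tau_p\}\big)+\E\big(Y_1^2\mathbf 1\{|Y_1|>\tau_p\}\big)\Big),
\end{equation*}
which is $o(\var Q_p)$ by dominated convergence. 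Hence it suffices to prove the CLT for $\bar Q_p$, whose variance is asymptotically unchanged.

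For the truncated variables, I would expand the fourth cumulant of $\bar Q_p$ by grouping index configurations. The Gaussian-like pairings give exactly $3\var^2$. The off-diagonal connected terms are bounded by $\tr\big((M^{(p)})^4\big)\le\|M^{(p)}\|_{\op}^2\|M^{(p)}\|_F^2$. The mixed diagonal/off-diagonal terms are bounded by $\E(\bar Y^4)\,\max_i\|M^{(p)}e_i\|_2^2\,\|M^{(p)}\|_F^2$. The purely diagonal connected term is
\begin{equation*}
\sum_i (M_{ii}^{(p)})^4\,\E\big((\bar Y_1^2-1)^4\big)\lesssim \tau_p^4\,\max_i (M_{ii}^{(p)})^2\,\|M^{(p)}\|_F^2\le \tau_p^4\,\|M^{(p)}\|_{\op}^2\,\|M^{(p)}\|_F^2 .
\end{equation*}
Choosing $\tau_p\to\infty$ slowly enough that $\tau_p^4\,\|M^{(p)}\|_{\op}^2/\|M^{(p)}\|_F^2\to0$ makes every connected contribution $o(\|M^{(p)}\|_F^4)$. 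This gives (ii), and de Jong's theorem then yields the limit for $\bar Q_p$, hence for $Q_p$.

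The main obstacle is the fourth-cumulant bookkeeping in (ii). One must check that every non-Gaussian index pattern, including the ones involving odd moments such as $\E(Y_1^3)$ that link diagonal and off-diagonal pieces, is dominated by one of the three quantities $\|M^{(p)}\|_{\op}^2\|M^{(p)}\|_F^2$, $\max_i\|M^{(p)}e_i\|_2^2\,\|M^{(p)}\|_F^2$, or $\tau_p^4\|M^{(p)}\|_{\op}^2\|M^{(p)}\|_F^2$. I would handle each pattern by Cauchy--Schwarz plus the row-norm bound $\|M^{(p)}e_i\|_2\le \|M^{(p)}\|_{\op}$.

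I am deliberately avoiding a direct martingale CLT in the natural filtration. That route leads to the strictly lower-triangular part of $M^{(p)}$, and triangular truncation only controls operator norms up to a factor of $\log p$, which the hypothesis does not absorb.
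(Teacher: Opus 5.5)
The paper does not prove this lemma; it quotes it from Bhansali, Giraitis and Kokoszka. Your argument therefore has to stand on its own, and it has a gap at the step where you invoke de Jong's theorem. Your truncation step is sound, and your diagram bounds are correct: every connected four-edge diagram with all degrees at least two is $\lesssim \tau_p^4\|M^{(p)}\|_{\op}^2\|M^{(p)}\|_F^2$.

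The problem is scope. De Jong's theorem concerns clean forms $\sum_{i<j}w_{ij}(X_i,X_j)$ with $\E(w_{ij}\mid X_i)=\E(w_{ij}\mid X_j)=0$. The summands $M_{ii}^{(p)}(Y_i^2-1)$ depend on a single variable and are not degenerate in that sense. Writing the centred form as $\sum_{i\le j}w_{ij}$ therefore does not bring it within the theorem.

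This is not cosmetic. The zero-diagonal case is the classical one, and what the cited result adds is exactly the treatment of a non-vanishing diagonal. Knowing that $D=\sum_i M_{ii}^{(p)}(Y_i^2-1)$ is asymptotically normal (Lindeberg) and that $O=2\sum_{i<j}M_{ij}^{(p)}Y_iY_j$ is asymptotically normal (de Jong) does not make $D+O$ normal. The two are uncorrelated but dependent.

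You need a joint statement. One option is the multivariate de Jong theorem of D\"obler and Peccati (2017): for vectors of degenerate U-statistics, possibly of different orders, satisfying de Jong's conditions componentwise and with convergent covariances, componentwise normality implies joint normality. Your bookkeeping already supplies what that theorem needs:
$\kappa_4(\bar D)=\kappa_4(\bar Y_1^2)\sum_i (M_{ii}^{(p)})^4\lesssim \tau_p^4\|M^{(p)}\|_{\op}^2\sum_i (M_{ii}^{(p)})^2$ is the four-loop diagram, and $\kappa_4(\bar O)$ consists of the loop-free diagrams, so it is $\lesssim\|M^{(p)}\|_{\op}^2\|M^{(p)}\|_F^2$. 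To finish, pass to subsequences along which $\sum_i (M_{ii}^{(p)})^2/\|M^{(p)}\|_F^2$ converges, drop any component whose share of the variance tends to zero, and use $\cov(\bar D,\bar O)=0$.

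Separately, your parenthetical about $m=0$ is wrong, and in that case the lemma as stated is false. Take Rademacher $Y_i$ and $M^{(p)}=I_p+e_1e_2^\top+e_2e_1^\top$. Then $\|M^{(p)}\|_F/\|M^{(p)}\|_{\op}=\sqrt{p+2}/2\to\infty$, yet $(Q_p-\E Q_p)/\sqrt{\var(Q_p)}=Y_1Y_2$, which is not asymptotically normal. When $m=0$ the variance is $2\sum_{i\neq j}(M_{ij}^{(p)})^2$, which need not be comparable to $\|M^{(p)}\|_F^2$, so your influence condition (i) fails.

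The statement needs $\E(Y_1^4)>1$. This costs the paper nothing, since Assumption~\ref{A:model}(a) imposes $\E(Z_{11}^4)>1$. Your proof should assume it explicitly rather than suggest that the case $m=0$ is covered.
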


\begin{lemma}
\label{lemma:covZMZ}
Let $Z_1=(Z_{11},\dots,Z_{1p})$ be a random vector with i.i.d.~entries satisfying $\E(Z_{11})=0$, $\var(Z_{11})=1$, and $\E(Z_{11}^4)<\infty$.
 If $A,B\in \R^{p\times p}$ are symmetric matrices, then 
\begin{align}\label{eqn:covZMZ}
\cov\big(Z_1^{\top} A Z_1, Z_1^{\top} B Z_1\big) & = 2\tr(AB) + \big(\E(Z_{11}^4)-3\big) \sum_{j=1}^p A_{jj} B_{jj}.
\end{align}
In addition, if $X_1=\Sigma^{1/2}Z_1$ with $\Sigma\in\R^{p\times p}$ being a fixed non-zero positive semidefinite matrix, then
\begin{equation}\label{eqn:constraint}
          \frac{\var(\|X_1\|_2^2)-2\|\Sigma\|_F^2}{\sum_{j=1}^p\Sigma_{jj}^2}
  =\frac{\E(\|X_1\|_4^4)-3\sum_{j=1}^p\Sigma_{jj}^2}{\|\Sigma^{1/2}\|_4^4}.
\end{equation}
\end{lemma}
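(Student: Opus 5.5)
The plan is to expand both quadratic forms in the entries of $Z_1$ and compute the covariance directly, then specialize to the probe matrices $A=I$ and $A=e_je_j^{\top}$ to obtain the constraint~\eqref{eqn:constraint}.

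\textbf{Covariance formula.} Write $Z_1^{\top}AZ_1=\sum_{i,k}A_{ik}Z_{1i}Z_{1k}$, and similarly for $B$. Then
\[
\E\big(Z_1^{\top}AZ_1\,Z_1^{\top}BZ_1\big)=\sum_{i,k,l,m}A_{ik}B_{lm}\,\E(Z_{1i}Z_{1k}Z_{1l}Z_{1m}).
\]
Because the entries are i.i.d.\ and centered, the fourth moment $\E(Z_{1i}Z_{1k}Z_{1l}Z_{1m})$ is nonzero only when the indices pair up:
\begin{itemize}
\item $i=k\neq l=m$ contributes $1$;
\item $i=l\neq k=m$ contributes $1$;
\item $i=m\neq k=l$ contributes $1$;
\item all four indices equal contributes $\E(Z_{11}^4)$.
\end{itemize}
Summing, and using the symmetry of $A$ and $B$, gives
\[
\tr(A)\tr(B)+2\tr(AB)+(\E(Z_{11}^4)-3)\sum_j A_{jj}B_{jj}.
\]
The correction $-3$ arises because the three pairing sums each include the diagonal case $i=k=l=m$ once, and that case is instead counted with weight $\E(Z_{11}^4)$. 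Subtracting $\E(Z_1^{\top}AZ_1)\E(Z_1^{\top}BZ_1)=\tr(A)\tr(B)$ yields~\eqref{eqn:covZMZ}.

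\textbf{Left side of~\eqref{eqn:constraint}.} Let $\kappa=\E(Z_{11}^4)-3$. Apply~\eqref{eqn:covZMZ} with $A=B=\Sigma$, using $\|X_1\|_2^2=Z_1^{\top}\Sigma Z_1$:
\[
\var(\|X_1\|_2^2)-2\|\Sigma\|_F^2=\kappa\sum_j\Sigma_{jj}^2.
\]
Dividing by $\sum_j\Sigma_{jj}^2$, which is positive because $\Sigma$ is nonzero and PSD, the left side equals $\kappa$.

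\textbf{Right side of~\eqref{eqn:constraint}.} Note that $X_{1j}^2=Z_1^{\top}M_jZ_1$ with $M_j=\Sigma^{1/2}e_je_j^{\top}\Sigma^{1/2}$. This matrix satisfies $\tr(M_j)=\Sigma_{jj}$, $\|M_j\|_F^2=\Sigma_{jj}^2$, and $(M_j)_{ll}=(\Sigma^{1/2})_{lj}^2$. Then $\E(X_{1j}^4)=\var(X_{1j}^2)+\Sigma_{jj}^2$, and~\eqref{eqn:covZMZ} gives
\[
\E(X_{1j}^4)=3\Sigma_{jj}^2+\kappa\sum_l(\Sigma^{1/2})_{lj}^4.
\]
Summing over $j$,
\[
\E(\|X_1\|_4^4)-3\sum_j\Sigma_{jj}^2=\kappa\,\|\Sigma^{1/2}\|_4^4.
\]
Dividing by $\|\Sigma^{1/2}\|_4^4>0$ gives $\kappa$ again, so both sides equal $\kappa$ and~\eqref{eqn:constraint} holds.

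The main care point is the combinatorial bookkeeping of index coincidences in the first step. The remaining steps are direct substitutions.
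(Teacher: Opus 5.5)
Your proof is correct and takes essentially the same approach as the paper: apply the covariance identity with $A=B=\Sigma$ and with $A=B=\Sigma^{1/2}e_je_j^{\top}\Sigma^{1/2}$, sum over $j$, and observe that both sides of the constraint equal $\E(Z_{11}^4)-3$. The only differences are that you derive the covariance identity by direct index-pairing rather than citing equation (9.8.6) of Bai and Silverstein, and that you explicitly check the two denominators are positive, which the paper leaves implicit.
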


\noindent \textit{Proof.} The identity~\eqref{eqn:covZMZ} is given in equation~(9.8.6) of~\cite{bai2010spectral}. To show~\eqref{eqn:constraint}, if we take $A=B=\Sigma$, then
\begin{equation*}
  \var(\|X_1\|_2^2) = 2\|\Sigma\|_F^2 + (\E(Z_{11}^4)-3)\sum_{j=1}^p \Sigma_{jj}^2. 
\end{equation*}
Meanwhile, if we take $A=B=\Sigma^{1/2}e_ke_k\ttop \Sigma^{1/2}$ with $e_k\in\R^p$  denoting the $k^{th}$ standard basis vector, then summing over $k=1,\dots,p$ gives
\begin{equation*}
    \sum_{k=1}^p\big(\E(X_{1k}^4)-3\Sigma_{kk}^2\big) = \big(\E(Z_{11}^4)-3\big) \sum_{k=1}^p \sum_{j=1}^p(\Sigma^{1/2})_{jk}^4.
\end{equation*}
Finally, eliminating $\E(Z_{11}^4)-3$ from the previous two equations leads to the stated result.

\qed

\begin{lemma}[\cite{bai2010spectral}, Lemma B.26]
\label{lem:Bai,quadra} Let $A\in\R^{p\times p}$ and $q\geq 1$ be fixed. If $Z_1=(Z_{11},\dots,Z_{1p})$ is a random vector with i.i.d.~entries satisfying $\E(Z_{11})=0$ and $\var(Z_{11})=1$, then
\begin{align*}
\E\Big(\big|Z_1^{\top} A Z_1 -\tr(A) \big|^q\Big) \leq C_q\Big( (\E(Z_{11}^4))^{q/2}\, \|A\|_{F}^q + \E(|Z_{11}|^{2q})\,\tr((AA\ttop)^{q/2})\Big),
\end{align*}
where $C_q>0$ is a number depending only on $q$.
\end{lemma}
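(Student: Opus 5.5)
The plan is to reduce to $q\ge 2$ and then split $Z_1^{\top}AZ_1-\tr(A)$ into a diagonal part and an off-diagonal part, bounding each with martingale/Rosenthal-type moment inequalities. Only the symmetric part of $A$ enters the quadratic form, but keeping $A$ general costs nothing. For $1\le q<2$ the bound follows from the case $q=2$ by Lyapunov's inequality, since $\E(|Z_1^{\top}AZ_1-\tr(A)|^2)\lesssim \E(Z_{11}^4)\|A\|_F^2$ by a direct computation in the spirit of Lemma~\ref{lemma:covZMZ}. So assume $q\ge 2$ and write
\begin{equation*}
Z_1^{\top}AZ_1-\tr(A)=\sum_{j=1}^p A_{jj}(Z_{1j}^2-1)\;+\;\sum_{i\neq j}A_{ij}Z_{1i}Z_{1j}=:T_1+T_2 .
\end{equation*}

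\textbf{Diagonal part.} $T_1$ is a sum of independent centered variables, so Rosenthal's inequality (Lemma~\ref{lem:Rosenthal}) gives
\begin{equation*}
\E|T_1|^q\lesssim_q \Big(\E(Z_{11}^4)\textstyle\sum_j A_{jj}^2\Big)^{q/2}+\E(|Z_{11}|^{2q})\textstyle\sum_j|A_{jj}|^q .
\end{equation*}
The first term is at most $(\E(Z_{11}^4))^{q/2}\|A\|_F^q$, because $\sum_j A_{jj}^2\le\|A\|_F^2$. For the second term, Cauchy--Schwarz gives $|A_{jj}|^2\le (AA^{\top})_{jj}$. Then for the PSD matrix $M=AA^{\top}$ and $q/2\ge1$, spectral Jensen gives $M_{jj}^{q/2}\le (M^{q/2})_{jj}$. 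Summing over $j$ yields $\sum_j|A_{jj}|^q\le\tr((AA^{\top})^{q/2})$.

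\textbf{Off-diagonal part.} Write $T_2=\sum_{i=2}^p Z_{1i}W_i$, where $W_i=\sum_{j<i}(A_{ij}+A_{ji})Z_{1j}$. This is a martingale with respect to the filtration generated by $Z_{11},\dots,Z_{1i}$. Burkholder's inequality (in Rosenthal form) then gives
\begin{equation*}
\E|T_2|^q\lesssim_q \E\Big(\textstyle\sum_i W_i^2\Big)^{q/2}+\E(|Z_{11}|^q)\textstyle\sum_i\E|W_i|^q ,
\end{equation*}
where the conditional variance of each increment is $W_i^2$ because $\E(Z_{1i}^2)=1$.

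For the second term, apply Lemma~\ref{lem:Rosenthal} to each linear form $W_i$. Summing the resulting $\ell_2$ parts uses $\sum_i\|a_i\|_2^q\le\tr((AA^{\top})^{q/2})$, by the same Jensen argument applied to the rows $a_i$ of $A$ and $A^\top$. The $\ell_q$ parts are bounded by the same quantity, since $\|a_i\|_q\le\|a_i\|_2$. The moment factors combine as $\E|Z_{11}|^q\cdot\E|Z_{11}|^q\le \E(|Z_{11}|^{2q})$.

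The first term, $\E(\sum_iW_i^2)^{q/2}$, is the main obstacle. The variable $\sum_iW_i^2$ is itself a quadratic form $Z_1^{\top}\tilde A Z_1$ with $\tilde A=L^{\top}L$, where $L$ is the strictly lower-triangular part of $A+A^{\top}$. Write $\sum_iW_i^2=\tr(\tilde A)+\big(Z_1^{\top}\tilde AZ_1-\tr(\tilde A)\big)$ and apply the statement inductively with exponent $q/2$, i.e.\ induct on $q$ over dyadic ranges, starting from the base case $q\in[1,2]$. The deterministic part contributes $\tr(\tilde A)^{q/2}\lesssim\|A\|_F^q$. The fluctuation part contributes $\|\tilde A\|_F^{q/2}$ and $\tr(\tilde A^{q/2})$, both with moment factors of order at most $q$.

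The delicate point is that triangular truncation is not bounded in Schatten norms uniformly in $p$. I would therefore avoid comparing $\tr(\tilde A^{q/2})$ with $\tr((AA^{\top})^{q/2})$ directly. Instead I would first use a decoupling inequality (de la Pe\~na--Montgomery-Smith) to replace $T_2$ by $Z_1^{\top}AZ_1'$ with an independent copy $Z_1'$. Conditioning on $Z_1'$ and applying Rosenthal gives $\E(\|AZ_1'\|_2^q)$, which is again a quadratic form in $Z_1'$, now with the matrix $A^{\top}A$, whose Schatten norms are exactly the ones appearing in the statement. The same induction on $q$ then closes with $\|A^{\top}A\|_F\le\|A\|_F^2$ and $\tr((A^{\top}A)^{q/2})=\tr((AA^{\top})^{q/2})$. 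This matches the strategy already used in Lemma~\ref{lemma: EZ1MZ2^4}.
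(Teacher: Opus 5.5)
The paper gives no proof of this lemma; it is imported from Bai and Silverstein, so your argument has to stand on its own. Write $\nu_r=\E|Z_{11}|^r$. Most of your argument is sound: the reduction to $q\ge 2$, the diagonal part (including $\sum_j|A_{jj}|^q\le\tr((AA\ttop)^{q/2})$), and the bound $\nu_q\sum_i\E|W_i|^q\lesssim_q\nu_{2q}\tr((AA\ttop)^{q/2})$.

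\textbf{The gap.} It lies in the closing step for the off-diagonal part, which you say ``matches the strategy already used in Lemma~\ref{lemma: EZ1MZ2^4}''. That strategy bounds the conditional Rosenthal maximum by $(1+\nu_q)\|v\|_2^q$ with $v=AZ_1'$. This produces a term $\nu_q\E\|AZ_1'\|_2^q\ge\nu_q\|A\|_F^q$, which is not dominated by the right-hand side with a constant depending only on $q$.

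For a concrete case, take $q=8$ and let $A$ be a fixed-point-free permutation matrix, so $\|A\|_F^8=p^4$ and $\tr((AA\ttop)^4)=p$. Let $|Z_{11}|$ equal $a=p^{1/4}$ with probability $a^{-4}$ and $b=(1+a^{-2})^{-1/2}$ otherwise, with an independent random sign. Then $\nu_4\le 2$, $\nu_8\ge p$ and $\nu_{16}\le p^3+1$. Your term is therefore at least $p^5$, while the right-hand side is $\mathcal{O}(p^4)$. In Lemma~\ref{lemma: EZ1MZ2^4} this loss is harmless only because all moments there are $\mathcal{O}(1)$. Here the explicit moment dependence is the content of the statement. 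If instead you meant that Rosenthal yields only $\E\|AZ_1'\|_2^q$, then the $\ell_q$ term has simply been dropped.

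\textbf{The fix.} Keep both Rosenthal terms. Decoupling applies to the diagonal-free part $A_o=A-\Diag(A)$, and conditionally on $Z_1'$ it gives $\E(|Z_1\ttop A_oZ_1'|^q\,|\,Z_1')\lesssim_q\|A_oZ_1'\|_2^q+\nu_q\|A_oZ_1'\|_q^q$.

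For the first term, run your induction at exponent $q/2$ with $M=A_o\ttop A_o$. The resulting pieces are $(\tr M)^{q/2}\le\|A\|_F^q$ and $\nu_4^{q/4}\|M\|_F^{q/2}\le\nu_4^{q/2}\|A\|_F^q$. The last piece is $\nu_q\tr(M^{q/2})\le 2^q\nu_{2q}\tr((AA\ttop)^{q/2})$, using the pinching inequality.

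For the second term, let $a_i$ denote the rows of $A_o$. Then $\nu_q\sum_i\E|a_i\ttop Z_1'|^q\lesssim_q\nu_q^2\sum_i\|a_i\|_2^q\le\nu_{2q}\tr((AA\ttop)^{q/2})$, which is exactly your $W_i$ computation.

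\textbf{The martingale route was not blocked.} By Macaev's theorem, triangular truncation is bounded on the Schatten class $S_r$ for every $1<r<\infty$, with a constant depending only on $r$. The $\log p$ growth you had in mind occurs only at $r=1$ and $r=\infty$. Since $q>2$ in the inductive step, $\tr(\tilde A^{q/2})=\|L\|_{S_q}^q\lesssim_q\tr((AA\ttop)^{q/2})$, and your Burkholder argument closes directly using $\nu_q\le\nu_{2q}$. Decoupling is a valid alternative that trades Macaev's theorem for a decoupling inequality, but it is a choice rather than a necessity.
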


\end{document}